\newtheorem{theorem}{Theorem}
\newtheorem{lemma}{Lemma}
\newtheorem{proposition}{Proposition}
\DeclareMathOperator{\E}{\mathbb{E}}
\DeclareMathOperator{\Pp}{\mathbb{P}}
\DeclareMathOperator{\R}{\mathbb{R}}
\DeclareMathOperator{\N}{\mathbb{N}}
\DeclareMathOperator{\Z}{\mathbb{Z}}
\DeclareMathOperator{\T}{\mathbb{T}}
\DeclareMathOperator{\V}{\mathbb{V}}
\DeclareMathOperator{\Oh}{\mathcal{O}}
\begin{document}
	\title[RANDOM EXPONENTIAL SUMS AND LATTICE POINTS IN REGIONS]
	{Random exponential sums and lattice points in regions}
	
	\author{FARUK TEMUR, CİHAN  SAHİLLİOĞULLARI}
	\address{Department of Mathematics\\
		İzmir Institute of Technology\\ Urla \\İzmir \\ 35430\\Turkey  }
	\email{faruktemur@iyte.edu.tr, cihansahilliogullari@iyte.edu.tr}

	\keywords{Hardy-Littlewood majorant problem, Lattice points in regions, Mei-Chu Chang's conjecture, Random exponential sums}
	\subjclass[2020]{Primary:  42A05, 42A61, 42A70, 11P05, 11P21;  Secondary: 60G10, 
		60G50, 60G51}
	\date{November  10, 2024}

	\begin{abstract}
		In this article we  study two fundamental problems on exponential sums via   randomization of frequencies  with stochastic processes. These are the Hardy-Littlewood majorant problem, and  $L^{2n}(\T), \ n\in \N$ norms of exponential sums, which can also be interpreted as solutions of diophantine equations or lattice points on surfaces. We  establish  connections to the well known problems on lattice points in regions such as the Dirichlet divisor problem. 
	\end{abstract}
	
	\maketitle

	\section{introduction}

		In this work\footnote{The results of this article will constitute part of the second authors PhD thesis \cite{cs}. For deeper and wider background on  these topics, more heuristics, and more detailed proofs we refer the reader to this thesis.} we will study random exponential sums of the form
	\begin{equation}\label{int0}
		\Big\|\sum_{j\in A} a_je^{2\pi i yX_j}\Big\|_{L^{p}(\T)}^{p} 
	\end{equation}	
	where $A$ is a finite subset of integers, and $X$ is a stochastic process on a probability space $(\Omega,\mathcal{F},\Pp).$   We will use these sums to investigate two esteemed problems of harmonic analysis and analytic number theory, the Hardy-Littlewood majorant problem, and  $L^{2n}(\T), \ n\in \N$ norms of exponential sums.  We will relate these sums for $p=2n$ and $a_j=0,1$ to problems on lattice points in regions such as the Dirichlet divisor problem.  The regions that we will encounter  have the form of shells around hypersurfaces. This can be seen as an analogue of the interpretation of $L^{2n}(\T), \ n\in \N$ norms of deterministic exponential sums as counting lattice points on surfaces.  But in our case establishing this interpretation is rather involved.

	The study of random exponential sums is a century old topic that is still immensely relevant as evidenced by the   vast number of citations to  \cite{k} which is a standard reference in this area.  Since the pioneering works of Paley and Zygmund \cite{pz1,pz2}, exponential sums    have often been   randomized by multiplying the coefficients  with independent identically distributed random variables $X_j$:
	\begin{equation}\label{}
		\Big\|\sum_{j\in A} a_jX_je^{2\pi i jy}\Big\|_{L^{p}(\T)}^{p}. 
	\end{equation}
	And  often the distribution of $X_j$ is chosen to be either Bernoulli with values  $\pm 1$, or  uniform  on $\T$,  or Gaussian. To study  our problems however we must randomize the frequency set $A$, which can be done either by multiplying the coefficients with selector variables, that is Bernoulli variables that take values $0$ and $1$, or by directly randomizing the frequency as we do in  this work.  Selector variables were employed by Mockenhaupt and Schlag \cite{ms} to study the Hardy-Littlewood majorant problem. An advantage  of our approach is that we can do  randomization around a fixed set without much destroying  the structure of it. For instance the sets $\{ N(j^d)\}_{j\in \N}$ for $d\geq 3$ will still look much like the set $\{j^d\}_{j\in \N}$, whereas had we randomized with selector variables they would  have lost all similarity to it.

	Precedents for randomization of frequency by stochastic processes exist in abundance. They emerge naturally in the study of  Gaussian Fourier series, and of Brownian images. For example in Chapter  14 of \cite{k}  for a process  $F(t)$ obtained by Gaussian Fourier series, and $\theta$ some measure on a subset $E\subseteq\T$ the expression
	\begin{equation}\label{int-2}
		\E\Big\|\int_E e^{2\pi i uF(t)}d\theta(t)\Big\|_{L^{2}(\R^d)}^{2} 
	\end{equation}
	is calculated to study the measure of $F(E).$ Also in Chapter 17  of the same work, to compute the Fourier  dimension of the  Brownian image $W(E)$ of a set $E$,   decay estimates as $u\rightarrow \infty$ are obtained for 
	\begin{equation}\label{int-3}
		\Big|\int_0^\infty e^{2\pi i uW(t)}d\theta(t)\Big|,
	\end{equation}
	where $\theta$ is a positive measure on the positive half line.  Estimates similar to \eqref{int-2},\eqref{int-3} went on to be employed by researchers studying Gaussian Fourier series and Brownian images ever since, see \cite{eh,fmd,fs, gg,g,mu,x} and references therein. Note that  in \eqref{int-2} and especially in  \eqref{int-3} by choosing $\theta$ to be a sum of Dirac delta measures we can obtain a random exponential sum similar to those we consider in this work. Though we  should also remark that owing to the purpose for which these estimates are utilized, $\theta$ in its original contexts is generally considered to be a measure supported on a larger set, so as to enable  decay, whereas sums in this article are always periodic. Hence randomization of frequency by stochastic processes not only has certain advantages peculiar to itself, but is also very natural with a long and established history.

	 Frequency randomization by stochastic processes  brings  two  difficulties that do not exist in  coefficient randomization. The first is  that some frequencies will repeat, therefore the Fourier series structure is violated and certain basic tools like the Plancherel theorem cannot be directly applied.   The second is, the random variables $X_j$ obtained from a process will not be independent for many important classes of processes, the most important for us being  the Poisson process.   We however surmount these difficulties by concentrating on differences of the form $X_{j_2}-X_{j_1},  \ X_{k_2}-X_{k_1} $ which are independent for any independent increment process, such as  Poisson  and Wiener processes, whenever $(j_1,j_2)\cap (k_1,k_2)$=$\emptyset$.  We delicately decompose our sums so as to obtain such independent differences whenever possible. When this is not possible, we use combinatorial and arithmetic counting arguments   to count the number of intersections, and if possible  reduce them to the non-intersecting case.

	$L^{2n}, n\in \N$ norms of exponential sums correspond to solutions of diophantine equations: letting $A\in \Z$ be a finite set, and ${\bf j,k}\in A^n$ with ${\bf j}=(j_1,j_2,\ldots,j_n), \ {\bf k}=(k_1,k_2,\ldots,k_n)$

	\begin{align}\label{int1}
		\Big\|\sum_{j\in A} e^{2\pi i yj}\Big\|_{L^{2n}(\T)}^{2n} &= \big| \big\{({\bf j,k})\in A^{2n}:\  \  \sum_{i=1}^n j_i=  \sum_{i=1}^nk_i    \ \  \big\}\big| \\&= \sum_{m\in \Z} \big| \big\{ {\bf j}\in A^{n}:\   \sum_{i=1}^n j_i=m      \big\}\big|^2 \label{int2}
	\end{align}
	where $|\cdot|$ denotes cardinality. Estimates on the size of the sets in \eqref{int1},\eqref{int2} in terms of  $|A|$ have always been of great interest to mathematicians. We will briefly summarize the essentials we need of this  broad subject, for much more detail see \cite{bhb,hb,hb2,mar,s,sw,v} and references therein.   Sets in \eqref{int1} always include $\sim  n! |A|^n$ trivial solutions coming from choosing ${\bf k}$ a permutation of ${\bf j}$. So the problem of estimating their size reduces   to estimating the cardinality of nontrivial solutions. For certain $A,n$ these nontrivial solutions may not even exist. The sets in \eqref{int2} are representations of $m$ by elements of $A^n$, and we will denote their cardinality by  $R_{A^n}(m).$ Certain of these sets may be empty, but  by permutations, if a set has one element ${\bf j}$ all permutations of it are also inside.

	Most often $A$ is chosen to be  $d$-powers of the set $\{1,2,3\ldots,M \}$, and  an estimation in terms of $M$ is demanded. In this case,  the set of \eqref{int1} turns into: 
	\begin{equation}\label{int3}
		\big| \big\{({\bf j,k}):  \    1\leq j_i,k_i\leq M,  \quad   j_1^d+j_2^d+\cdots +j_n^d=	k_1^d+k_2^d+\cdots +k_n^d  \big\}\big|, 
	\end{equation}
	The sets in \eqref{int2}, the cardinalities of which in this particular case we will denote by $R_{n,d,M}(m)$, are representations  of $m$ by $n$ $d$-powers:
	\begin{equation}\label{int4}
		R_{n,d,M}(m):=	\big| \big\{{\bf j}:  \    1\leq j_i\leq M, \quad  j_1^d+j_2^d+\cdots +j_n^d=	m  \big\}\big|. 
	\end{equation}
	Plainly these problems can also be viewed as finding lattice points on a variety in affine or projective space.
	The cases of small $n$, such as  $n=2,3,4$ have been studied even more intensely. 
	
	To have an opinion on the size of the sets in  \eqref{int3},\eqref{int4} there is the following  well known heuristic argument.   $A^n$ has $M^n$  elements, but any sum $	j_1^d+j_2^d+\cdots +j_n^d$ is an integer between $1$ and $nM^d.$ Therefore, on average there are $n^{-1}M^{n-d}$ representations any integer $1\leq m \leq nM^d $. So, for $d\geq n$ it is reasonable to expect that   $R_{n,d,M}(m)\lesssim 1$  for any $m$, and  that trivial solutions dominate in \eqref{int3}, which is referred to as paucity of nontrivial or nondiagonal solutions.  This heuristics is behind the Hardy-Littlewood Hypothesis $K$, which is essentially the claim 
	 $R_{n,n,M}(K)=\mathcal{O}(K^{\varepsilon})$. This conjecture is well known to be true for $n=2$, but was disproved by Mahler \cite{m}  for $n=3.$ Yet it may still be true for various other values of $n.$  There is also the conjecture, relying on the same heuristics, that the cardinality of the set in \eqref{int3} is bounded by  $C_{\varepsilon}M^{\varepsilon}\max\{M^n,M^{2n-d}\},$
	see \cite{v}. For $n=2$ this is known, but even for $n=3$ it is open, and a topic of great interest. Indeed great effort \cite{bhb,hb,s} has been  directed to just obtaining bounds $M^{7/2-\delta}$ for $d$  high enough. The bound $M^{3+\varepsilon}$ has been achieved for $d>25$ by Salberger in \cite{s}, and no reduction in $d$ has been possible in the intervening two decades. For higher values of $n$ even less is known, see \cite{mar,sw}. In \cite{sw} Salberger and Wooley prove the only result in the literature  for generic $n$, and establish the paucity of nontrivial solutions, and hence the conjecture, for $d\geq (2n)^{4n}.$ 
	
	If we take $d$-powers of an arbitrary finite set $A$, these problems become much harder, and even  the $n=2$ case is wide open. In this case Chang \cite{c} conjectures that 
	\begin{equation}\label{chang}
		\Big\|\sum_{j\in A} e^{2\pi i yj^2}\Big\|_{L^{4}(\T)}^{4}\leq C_{\varepsilon}|A|^{2+\varepsilon}.
	\end{equation} 
This conjecture is  closely related to many other problems of harmonic analysis, additive combinatorics and number theory, see \cite{c,cg, san} for a broad view of these connections.
  The bound $|A|^3$ is trivial for \eqref{chang}, and there has been no power type  improvement on this, see \cite{c,san} for the improvements that have been possible. Randomized versions of this conjecture will be some of our main objects of study in this work, and we will confirm Chang's conjecture on average, and uncover similar behaviour for any $d,n.$

	Another old and esteemed problem on exponential sums is the Hardy-Littlewood majorant problem which  concerns precisely non-even $L^p$ norms.  Let $A\subseteq \{1,2,\ldots,N\}$. Then for $|a_j|\leq 1,  j\in A  $ and $p=2n$ we have 
	\begin{equation*}\label{int5}
		\Big\|\sum_{j\in A} a_je^{2\pi i yj}\Big\|_{L^{p}(\T)}\leq 	\Big\|\sum_{j\in A} e^{2\pi i yj}\Big\|_{L^{p}(\T)}.
	\end{equation*} 
	Hardy and Littlewood asked what happens  for arbitrary $p>2.$  To make things clearer we define the constants
	\begin{equation*}
		\sup_{|a_j|\leq 1 }  \Big\|\sum_{j\in A} a_je^{2\pi i yj}\Big\|_{L^{p}(\T)}= B_p(A)	\Big\|\sum_{j\in A} e^{2\pi i yj}\Big\|_{L^{p}(\T)},
	\end{equation*} 
	and 
	\begin{equation*}
		B_p(M):=\sup_{A\subseteq \{1,2,\ldots,M\}}B_p(A).	
	\end{equation*}
	We note that $B_p(M)$ is an increasing quantity. Already Hardy and Littlewood knew that $B_3(A)>1$ for certain sets $A$.  So it is not reasonable to expect in general what is true for even exponents. But it may still be expected to have $B_p(M)$  bounded  by a constant independent of $M$. Bachelis \cite{bac} disproved this, and showed that for any $p$ that is not an even integer $B_p(M)\rightarrow \infty$ as $M\rightarrow \infty$. After this for a time it was hoped that $B_p(M)\leq C_{\varepsilon}M^{\varepsilon}$ may be possible. But the works of  Green-Rusza \cite{gr} and Mockenhaupt-Schlag \cite{ms} disproved even this. Thus  there is no hope for results that will hold for all sets, but for specific subsets of $\N$ positive results are still possible, and important. Indeed it is known  by the work of Mockenhaupt  \cite{moc} that   for certain sets  $\Gamma\subseteq \{1,2,\ldots,M\}$ obtained by discretizing and projecting the sphere $S^{n-1}$ the bound 
	$B_p(\Gamma)\leq C_{\varepsilon}M^{\varepsilon}$ would imply the famous restriction conjecture in harmonic analysis. For the restriction conjecture, its connections and variants see  \cite{dem,ft1,ft2,ft3}. So studying the Hardy-Littlewood majorant problem for specific subsets is  very important and continues to this day, see for example \cite{d,gre,kmt,kre}. This is also very meaningful, for  Mockenhaupt-Schlag in an article \cite{ms}  that  served as the starting point of our investigations proved that  generic sets  satisfy the Hardy-Littlewood majorant property. In this they built upon arguments of Bourgain \cite{bou1}. More  precisely let $\xi_j$ be i.i.d selector variables with distribution
	$\Pp[\xi_j=1]=M^{-\delta}$ and $\Pp[\xi_j=0]=1-M^{-\delta}$ where $0<\delta<1.$  Then for any $\varepsilon>0$, as $M\rightarrow \infty$ 
	\begin{equation}\label{intms}
		\Pp\Big[  \sup_{|a_j|\leq 1}\Big\| \sum_{j=1}^{M} a_j\xi_je^{2\pi i yj}   \Big\|_{L^p(\T)} \geq M^{\varepsilon}  \Big\| \sum_{j=1}^{M} \xi_je^{2\pi i yj}   \Big\|_{L^p(\T)} \Big]\rightarrow 0.
	\end{equation} 
	
	One area of study  that is very much connected with the Hardy-Littlewood majorant problem is the $\Lambda(p)$ sets.  A set $A\subseteq \Z $ is a $\Lambda(p),\  p>2 $ set, if for any exponential sum on it
	\begin{equation*}
		\Big\|\sum_{j\in A} a_je^{2\pi i yj}\Big\|_{L^{p}(\T)} \leq C_{A,p}	\Big\|\sum_{j\in A} a_je^{2\pi i yj}\Big\|_{L^{2}(\T)} =  C_{A,p}\Big[	\sum_{j\in A} |a_j|^2\Big]^{1/2}.
	\end{equation*} 
	Plainly this is true for any finite set, so this concept is nontrivial for infinite sets $A$ only. Now suppose $A$ is a $\Lambda(p)$ set. Then for coefficients $a_j$ of size at most $1$,
	\begin{align*}
		\Big\|\sum_{j\in A} a_je^{2\pi i yj}\Big\|_{L^{p}(\T)} \leq C_{A,p}	\Big\|\sum_{j\in A} a_je^{2\pi i yj}\Big\|_{L^{2}(\T)} 
		\leq  C_{A,p}	\Big\|\sum_{j\in A} e^{2\pi i yj}\Big\|_{L^{2}(\T)}\leq C_{A,p}	\Big\|\sum_{j\in A} e^{2\pi i yj}\Big\|_{L^{p}(\T)},
	\end{align*}
	showing that this set has the Hardy-Littlewood majorant property with $B_p(A)\leq C_{A,p}$. Bourgain \cite{bou1} proved that a generic subset of   $\{1,2,\ldots,M\}$ of size $[M^{\delta}], \   0<\delta \leq 2/p$ is a $\Lambda(p)$ set, and hence satisfies the Hardy-Littlewood majorant property. In \cite{bou2} he extended this result to generic subsets of powers and primes.  Genericity meant in these  works  is   similar to \eqref{intms}. See also the very recent work \cite{djr} extending and furthering this line of research.
	Lastly, we remind the important conjecture of Rudin \cite{ru} that squares is a $\Lambda(p)$ set for $p<4$. This conjecture, if true, would imply Chang's conjecture.

	Below we  introduce our results, and the
	   following  notation  will help us articulate them. The notation $a\lesssim b$ means there exists a positive constant $C$ such that $a\leq Cb$, and $a\gtrsim b$ is defined analogously. We denote $a\approx b$ if both of these are true. For two functions $f,g$ the notation $f\sim g$ means $f(x)/g(x)\rightarrow 1 $ as $x\rightarrow \infty.$ We will also employ the standard asymptotic  notation $ \Oh, o,\Omega $. We use $\E$ to denote the expectation and $\V$ for variance. We let $\Z_+$ denote the nonnegative integers, and    $\R_+$ the nonnegative real numbers.

	\begin{theorem}
		Let $\{X_j\}_{j\in \mathbb{N}}$ be  a stationary process taking only integer values. 	Let the probability mass function of the  random variables in our process be  denoted by $\mu:\Z\rightarrow \R.$  Let $A\subset \N$  be any finite nonempty subset.  Then for any $1\leq p<\infty$ and any $\varepsilon>0$ we have
		
		\begin{equation}\label{s1}
 |A|^p	\lesssim_{\mu,p}	\mathbb{E}\Big\|\sum_{j\in A} e^{2\pi i yX_j}\Big\|_{p}^p \leq |A|^p,
		\end{equation}
		
		\begin{equation}\label{s2}
			\mathbb{E}\sup_{|a_j|\leq 1}\Big\|  \sum_{j\in A} a_je^{2\pi i yX_j}  \Big\|_{p}^p \lesssim_{\mu,p} \mathbb{E}\Big\|  \sum_{j\in A} e^{2\pi i yX_j}   \Big\|_{p}^p,
		\end{equation}
		
		\begin{equation}\label{s3}
			\lim_{|A|\rightarrow \infty}\mathbb{P}\Big[\sup_{|a_j|\leq 1} \Big\|  \sum_{j\in A} a_je^{2\pi i yX_j}  \Big\|_{p} \geq |A|^{\varepsilon}\Big\|  \sum_{j\in A} e^{2\pi i yX_j}   \Big\|_{p} \Big]=0.
		\end{equation}
		Analogues of these results for $p=\infty$ also hold.
	\end{theorem}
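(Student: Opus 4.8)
The strategy is to deduce all three estimates from a single elementary pointwise observation about the trigonometric polynomial $g_A(y):=\sum_{j\in A}e^{2\pi i yX_j}$. On the one hand, the triangle inequality gives $\|g_A\|_p\le\sum_{j\in A}\|e^{2\pi i yX_j}\|_p=|A|$, and similarly $\sup_{|a_j|\le1}\|\sum_{j\in A}a_je^{2\pi i yX_j}\|_p\le\sum_{j\in A}|a_j|\le|A|$, for every realisation, since $|e^{2\pi i yX_j}|\equiv1$ and $\mathbb{T}$ carries unit mass. On the other hand, the $m$-th Fourier coefficient of $g_A$ is $\widehat{g_A}(m)=\#\{j\in A:X_j=m\}=:N_m\ge0$, so the trivial bound $|\widehat{g_A}(m)|\le\|g_A\|_1\le\|g_A\|_p$ (using $p\ge1$) gives the pointwise lower bound $\|g_A\|_p\ge\max_{m}N_m$. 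These two facts carry the whole proof.

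For \eqref{s1}, I would take the upper bound directly from the pointwise bound above. For the lower bound, fix any $m_0$ with $\mu(m_0)>0$, which exists since $\mu$ is a genuine probability mass function; by stationarity every $X_j$ has distribution $\mu$, hence $\mathbb{E}N_{m_0}=\sum_{j\in A}\mathbb{P}[X_j=m_0]=\mu(m_0)|A|$. Combining $\|g_A\|_p\ge N_{m_0}$ with Jensen's inequality for the convex map $t\mapsto t^p$ yields $\mathbb{E}\|g_A\|_p^p\ge\mathbb{E}N_{m_0}^p\ge(\mathbb{E}N_{m_0})^p=\mu(m_0)^p|A|^p$, which is \eqref{s1} with implied constant $\mu(m_0)^{-p}$. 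Statement \eqref{s2} then follows at once, since its left side is $\le|A|^p$ by the triangle inequality while its right side is $\ge\mu(m_0)^p|A|^p$ by \eqref{s1}.

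For \eqref{s3}, I would write $F_A=\|g_A\|_p$ and $G_A=\sup_{|a_j|\le1}\|\sum_{j\in A}a_je^{2\pi i yX_j}\|_p$, so $G_A\le|A|$ always; thus on the event $\{G_A\ge|A|^\varepsilon F_A\}$ one has $F_A\le|A|^{1-\varepsilon}$, and it suffices to prove $\mathbb{P}[F_A\le|A|^{1-\varepsilon}]\to0$ as $|A|\to\infty$. Given $\eta>0$, choose a finite set $S\subseteq\mathbb{Z}$ with $\mu(S)\ge1-\eta$ and put $K=|S|$. Then $\#\{j\in A:X_j\notin S\}$ has expectation $|A|(1-\mu(S))\le\eta|A|$, so by Markov's inequality it is $<|A|/2$ with probability $\ge1-2\eta$; on that event more than $|A|/2$ of the indices $j\in A$ have $X_j\in S$, so by pigeonhole some $m^*\in S$ has $N_{m^*}\ge|A|/(2K)$, whence $F_A\ge\max_mN_m\ge|A|/(2K)$. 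As soon as $|A|^\varepsilon>2K$ this forces $F_A>|A|^{1-\varepsilon}$, so $\mathbb{P}[F_A\le|A|^{1-\varepsilon}]\le2\eta$ for all such $A$; letting $\eta\to0$ proves \eqref{s3}. When $p=\infty$ everything collapses, since evaluating at $y=0$ shows $\|g_A\|_\infty=|A|$ identically, making the $p=\infty$ analogues of \eqref{s1}--\eqref{s2} trivial and the event in \eqref{s3} empty for $|A|\ge2$.

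The only step requiring thought is \eqref{s3}, and the key point there is that $F_A$ must be controlled from below by $\max_m N_m$ (equivalently, by the pigeonhole quantity produced from $S$), not by $N_{m_0}$ for a single fixed $m_0$: for the degenerate stationary process $X_j\equiv Y$ one has $N_{m_0}=0$ with probability $1-\mu(m_0)$ and yet $F_A=|A|$, so the cruder bound would fail to tend to zero. It is worth emphasising that the argument uses no independence, mixing, or other joint structure of the process beyond the fact that all marginals equal $\mu$; it rests entirely on the triangle inequality, the Fourier-coefficient bound $|\widehat{g_A}(m)|\le\|g_A\|_p$, Jensen's inequality, and Markov's inequality.
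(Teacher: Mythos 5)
Your proposal is correct, and it follows a genuinely different (and in places more economical) route than the paper's proof. Both arguments hinge on the counting function $N_m=\#\{j\in A:X_j(\omega)=m\}$ (the paper calls it $U(\omega,m)$), but the way you extract a lower bound for $\|g_A\|_p$ from it differs. For \eqref{s1} the paper computes $\mathbb{E}\|g_A\|_2^2=\sum_m\mathbb{E}U^2(\omega,m)\ge|A|^2\sum_m\mu_m^2$ via expanding the square and Cauchy--Schwarz, and then passes to general $p$ by two separate interpolation steps (one for $p>2$, one for $1\le p<2$ using the factor $\|g_A\|_\infty^{2-p}$). You instead use the single pointwise inequality $\|g_A\|_p\ge\|g_A\|_1\ge|\widehat{g_A}(m)|=N_m$, fix one atom $m_0$ of $\mu$, and apply Jensen: $\mathbb{E}\|g_A\|_p^p\ge\mathbb{E}N_{m_0}^p\ge(\mu(m_0)|A|)^p$. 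This avoids the Plancherel computation and the case split on $p$; the trade-off is a slightly weaker implied constant ($\mu(m_0)^p$ rather than $(\sum_m\mu_m^2)^{p/2}$), which is irrelevant for the $\lesssim_{\mu,p}$ statement. Similarly, for \eqref{s3} the paper picks a finite window $[-K,K]$ of mass $>1-\epsilon$, truncates, and then estimates the sum directly on a short interval $0\le y\le 1/100K$ where the truncated cosines are near $1$, obtaining $\|g_A\|_p\gtrsim|A|/K^{1/p}$ with high probability. You again bypass the oscillatory-integral step: you bound $F_A\ge\max_mN_m$ via the Fourier coefficient, then use Markov plus pigeonhole over a high-mass finite set $S$ to get $\max_mN_m\ge|A|/(2K)$ with probability $\ge1-2\eta$. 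Both prove the claim; yours has the advantage that the same lemma ($\|g_A\|_p\ge N_m$) carries all three parts, while the paper's pointwise estimate near $y=0$ also reappears in its proofs of Theorems 2 and 3, so it is not wasted effort in their broader scheme. Your observation about why the single-atom bound suffices for \eqref{s1} but not for \eqref{s3} (the degenerate process $X_j\equiv Y$) is a correct and useful sanity check.
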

	Heuristically a  process being stationary means it repeats the same values with the same probabilities. Thus the exponential sum in \eqref{s1} may be expected to behave like an $|A|$-fold  sum  of the same exponential $e^{2\pi i yX_j}.$ Our result \eqref{s1} confirms this heuristic, and it is strong enough to yield \eqref{s2},\eqref{s3} immediately.  As seen in there \eqref{s1} is about the  $L^p$ norms of random exponential sums, \eqref{s2} is the  Hardy-Littlewood majorant property on average, while \eqref{s3} is the Hardy-Littlewood majorant property for generic sets as in the contexts of  Bourgain and Schlag-Mockenhaupt. This theorem is obtained using only basic techniques of harmonic analysis and probability, and in its proof we illustrate these techniques.
	
	Our second result gives similar estimates for the case of the Poisson process with the same techniques. As its proof is relatively similar, it will only be summarized, with differences especially indicated. Recall that for $1<p<\infty$
	\begin{equation}\label{int6}
		\Big\|\sum_{j=1}^M e^{2\pi i yj}\Big\|_{p}^p\approx_p M^{p-1}.
	\end{equation}
	Let us see our second  theorem. 
	\begin{theorem}
		Let $\{N(t)\}_{t\geq 0}$ be  a Poisson process of intensity $1$.  Let $\{a_j\}_{j\in \mathbb{N}}$ be any complex sequence with $|a_j|\leq 1, \ j\in \mathbb{N} $. Then for any $2\leq p< \infty$ we have
		
		\begin{equation}\label{p1}
			\mathbb{E}\Big\|\sum_{j=1}^M e^{2\pi i yN(j)}\Big\|_{p}^p \approx_p M^{p-1}.
		\end{equation}
	 	Analogues of \eqref{s2},\eqref{s3} hold as well. Analogues of these results for $p=\infty$ also hold.

	\end{theorem}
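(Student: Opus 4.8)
The plan is to isolate the two properties of the Poisson process that are actually used — independent Poisson increments, and nondecreasing sample paths with $N(j)\le N(M)$ for $j\le M$ — and to funnel everything through the single quantity
\[
S=S_M:=\#\{(j,k)\in\{1,\dots,M\}^2:\ N(j)=N(k)\}=\Big\|\sum_{j=1}^M e^{2\pi i yN(j)}\Big\|_{2}^{2},
\]
the last equality coming from expanding the square and integrating in $y$; write also $f(y)=\sum_{j=1}^M e^{2\pi i yN(j)}$. The engine is the estimate $M\le\E S\le\frac{e+1}{e-1}M$: the lower bound is the diagonal $j=k$, while for $j<k$ the variable $N(k)-N(j)$ is Poisson with mean $k-j$, so $\Pp[N(j)=N(k)]=e^{-|j-k|}$, and summing this over $1\le j,k\le M$ gives a quantity $\approx M$. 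This is exactly where non-stationarity enters: a stationary process revisits a value with probability $\gtrsim1$, whence $\E\#\{(j,k):X_j=X_k\}\approx|A|^2$ and Theorem 1 outputs $|A|^p$; the summability of $e^{-|j-k|}$ costs us one factor of $M$ and produces $M^{p-1}$ instead.

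\emph{Upper bound in \eqref{p1} and the analogue of \eqref{s2}.} Fix any coefficients $|a_j|\le1$ and set $g(y)=\sum_{j=1}^M a_j e^{2\pi i yN(j)}$. Then $\|g\|_\infty\le\sum_j|a_j|\le M$, and, grouping the $j$ by the value of $N(j)$, $\|g\|_2^2=\sum_{c\in\Z}\bigl|\sum_{j:N(j)=c}a_j\bigr|^2\le S$. For $p\ge2$ this yields $\|g\|_p^p\le\|g\|_\infty^{p-2}\|g\|_2^2\le M^{p-2}S$ uniformly in $a$, hence $\E\,\sup_{|a_j|\le1}\|g\|_p^p\le M^{p-2}\,\E S\lesssim_p M^{p-1}$. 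Taking $a_j\equiv1$ gives the upper half of \eqref{p1}; combined with the lower bound below it gives the analogue of \eqref{s2}.

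\emph{Lower bound in \eqref{p1} and the analogue of \eqref{s3}.} Here the monotone path is used. On $\Omega_M:=\{N(M)\le2M\}$, which has $\Pp[\Omega_M]\ge1-1/M$ by Chebyshev, every frequency satisfies $N(j)\le2M$, so for $|y|\le(8\pi M)^{-1}$
\[
\bigl|f(y)-M\bigr|\le 2\pi|y|\sum_{j=1}^M N(j)\le 2\pi|y|\,M\,N(M)\le\tfrac{M}{2},
\]
so $|f(y)|\ge M/2$ on $\{|y|\le(8\pi M)^{-1}\}$; integrating over this set gives $\|f\|_p^p\ge c_pM^{p-1}$ on $\Omega_M$, whence $\E\|f\|_p^p\ge c_pM^{p-1}\Pp[\Omega_M]\gtrsim_p M^{p-1}$. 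For \eqref{s3}, on $E:=\Omega_M\cap\{S\le M^{1+\varepsilon'}\}$, which has $\Pp[E]\to1$ by Chebyshev and Markov, the two pointwise bounds above give, for every $|a_j|\le1$,
\[
\frac{\|g\|_p^p}{\|f\|_p^p}\le\frac{M^{p-2}S}{c_pM^{p-1}}\le\frac{M^{\varepsilon'}}{c_p},
\]
so the ratio of $L^p$ norms is below $M^\varepsilon$ once $\varepsilon'<p\varepsilon$ and $M$ is large, proving \eqref{s3}. For $p=\infty$ one has $\|f\|_\infty\le M$ always and $\|f\|_\infty\ge|f(0)|=M$, so $\|f\|_\infty=M$ deterministically, and likewise $\sup_{|a_j|\le1}\|g\|_\infty=M$; the $p=\infty$ analogues follow at once. (Mirroring Theorem 1, the lower bound in \eqref{p1} could alternatively be obtained from $\E|f(y)|^p\ge|\E f(y)|^p$ with $\E f(y)=\sum_{j=1}^M e^{j(e^{2\pi iy}-1)}$; the only computation needed is that $|e^{j(e^{2\pi iy}-1)}|=e^{(\cos2\pi y-1)j}$ and the argument $j\sin2\pi y$ stay tame for $j\le M$, $|y|\lesssim1/M$, so each summand has real part bounded below by an absolute constant.)

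The main point requiring care is the lower bound: one must ensure that the frequencies $N(j)$ do not spread past order $M$, so that $f$ keeps its size on a $1/M$-neighbourhood of the origin, which is precisely what Poisson concentration of $N(M)$ provides. Apart from that, the two difficulties advertised in the introduction — repeated frequencies and dependence of the $N(j)$ — are fully absorbed by the combinatorial quantity $S$ and the identity $\Pp[N(j)=N(k)]=e^{-|j-k|}$, so no further obstacle is expected.
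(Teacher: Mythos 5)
Your proof is correct and follows essentially the same route as the paper's: compute $\E\|f\|_2^2=\sum_{j,k}e^{-|j-k|}\approx M$ from the Poisson increment $\Pp[N(j)=N(k)]=e^{-|j-k|}$, pass to $p>2$ via $\|g\|_p^p\le\|g\|_\infty^{p-2}\|g\|_2^2$, and obtain the lower bound by controlling $N(M)$ and integrating $|f|^p$ over a $\sim 1/M$-neighbourhood of $y=0$. The only deviations are cosmetic: you use Chebyshev on $\Var(N(M))=M$ (giving probability $1-1/M$) where the paper invokes its subgaussian Lemma 1 to get an exponentially small exceptional set, and you phrase the \eqref{s3} step directly in terms of the event $\{S\le M^{1+\varepsilon'}\}$ while the paper uses a $\log M$ threshold via Markov; both are equally valid.
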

	Heuristics behind this is that $N(j)\sim j$ with high probility: this follows from the well known facts  $\E[N(j)]=\V[N(j)]=j$. So \eqref{p1}  confirms this heuristics, and gives the same estimate as in  \eqref{int6}. Again this estimate is strong enough to yield the other results of the theorem.

	We can prove a similar theorem for the simple random walk, by essentially the same arguments. One difficulty is that the simple random walk is not increasing, which can be overcome by the Doob martingale inequality. One missing point of both Theorem 2 and Theorem 3 compared to Theorem 1 is that  Theorem 1 is proved for an arbitrary subset $A\subseteq \{1,2,\ldots, M\}$ while the other two theorems are only valid for the whole set $\{1,2,\ldots,M\}$.  It would be very nice to prove these theorems for arbitrary subsets as well, but this seems to be rather difficult. Another point is that we cede the range $1\leq p<2.$
	\begin{theorem}
		Let $\{R(j)\}_{j\in\Z_+}$ be  a simple random walk.   Then for any $2\leq p< \infty$ we have
		
		\begin{equation}\label{r1}
			\mathbb{E}\Big\|\sum_{j=1}^M e^{2\pi i yR(j)}\Big\|_{p}^p \approx_p M^{p-1/2}.
		\end{equation}
			Analogues of  \eqref{s2},\eqref{s3} hold  as well.	Analogues of these results for $p=\infty$ also hold.
		
	\end{theorem}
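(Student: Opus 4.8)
The plan is to follow the scheme used for Theorems 1 and 2, the new feature being that a simple random walk $R$ (with $R(0)=0$ and i.i.d.\ increments uniform on $\{\pm1\}$) spreads diffusively, so that $R(1),\dots,R(M)$ occupy an interval of length $\sim\sqrt M$ with each value attained $\sim\sqrt M$ times; this is what yields the extra $M^{1/2}$ in \eqref{r1} relative to \eqref{int6} and \eqref{p1}. I would first record two elementary local estimates for the walk $R(s)$ after $s$ steps: $\sup_{v\in\mathbb{Z}}\mathbb{P}[R(s)=v]\le Cs^{-1/2}$ for all $s\ge1$, and $\mathbb{P}[R(s)=0]\ge cs^{-1/2}$ for even $s\ge2$ (Stirling on the central binomial coefficient). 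With $f(y)=\sum_{j=1}^M e^{2\pi i yR(j)}$, it suffices to prove \eqref{r1} for even exponents $p=2n$, then pass to general $p\ge2$ by soft arguments, and finally read off the analogues of \eqref{s2},\eqref{s3} and the $p=\infty$ case; the range $1\le p<2$ is ceded. Expanding the $2n$-th power and integrating in $y$ (the exponent $\sum_iR(j_i)-\sum_iR(k_i)$ is an integer),
\[
\mathbb{E}\|f\|_{2n}^{2n}=\sum_{\mathbf j,\mathbf k\in\{1,\dots,M\}^n}\mathbb{P}\Big[\sum_{i=1}^n R(j_i)=\sum_{i=1}^n R(k_i)\Big].
\]
Listing the distinct values among $0,j_1,\dots,j_n,k_1,\dots,k_n$ as $0=t_0<t_1<\cdots<t_L$ (so $L\le2n$) and setting $g_\ell=t_\ell-t_{\ell-1}$, $\Delta_\ell=R(t_\ell)-R(t_{\ell-1})$, the $\Delta_\ell$ are independent, each $\Delta_\ell$ is distributed as $R(g_\ell)$, and $\sum_iR(j_i)-\sum_iR(k_i)=\sum_{\ell=1}^{L}c_\ell\Delta_\ell$ with $c_\ell=\#\{i:j_i\ge t_\ell\}-\#\{i:k_i\ge t_\ell\}\in\{-n,\dots,n\}$; this is the analogue of the independent-difference decomposition of Theorem 2.

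For the upper bound at $p=2n$: if all $c_\ell=0$ --- equivalently $\mathbf j$ and $\mathbf k$ coincide as multisets --- the probability is $1$, and there are only $\lesssim_n M^n\le M^{2n-1/2}$ such pairs. Otherwise, fixing any index $\ell_0$ with $c_{\ell_0}\ne0$, conditioning on $(\Delta_\ell)_{\ell\ne\ell_0}$ and applying the local bound to $c_{\ell_0}\Delta_{\ell_0}$ gives $\mathbb{P}[\sum_iR(j_i)=\sum_iR(k_i)]\le Cg_{\ell_0}^{-1/2}$. I would then group the pairs $(\mathbf j,\mathbf k)$ by combinatorial type --- which coordinate equals which $t_\ell$; there are $\Oh_n(1)$ types, each fixing $L\le2n$ and such an index $\ell_0$, and a pair of a given type is determined by its gaps $g_1,\dots,g_L\ge1$ with $\sum g_\ell\le M$. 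Summing $g_{\ell_0}^{-1/2}$ over $g_{\ell_0}\le M$ costs $\lesssim M^{1/2}$ while the remaining $L-1$ gaps range over $\lesssim M^{L-1}\le M^{2n-1}$ tuples, so each type contributes $\lesssim M^{2n-1/2}$ and hence $\mathbb{E}\|f\|_{2n}^{2n}\lesssim_n M^{2n-1/2}$.

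For the matching lower bound I would restrict the sum to the interleaved family $\{j_1,\dots,j_n\}=\{t_1,t_3,\dots,t_{2n-1}\}$, $\{k_1,\dots,k_n\}=\{t_2,t_4,\dots,t_{2n}\}$ with $0<t_1<\cdots<t_{2n}\le M$ and every gap of size $\approx M$. A direct count of the coefficients gives $c_\ell=0$ for odd $\ell$ and $c_\ell=-1$ for even $\ell$, so $\sum_iR(j_i)-\sum_iR(k_i)=-(\Delta_2+\Delta_4+\cdots+\Delta_{2n})$, which, being a sum of independent walk increments, is distributed as $R(G')$ with $G'=g_2+g_4+\cdots+g_{2n}\approx M$. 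For the $\gtrsim_n M^{2n}$ choices of $(t_\ell)$ with such gaps and with $G'$ even, the corresponding probability equals $\mathbb{P}[R(G')=0]\gtrsim (G')^{-1/2}\gtrsim M^{-1/2}$, whence $\mathbb{E}\|f\|_{2n}^{2n}\gtrsim_n M^{2n}\cdot M^{-1/2}=M^{2n-1/2}$.

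To pass to general $p\in[2n,2n+2]$ for the upper bound, write $1/p=(1-\theta)/(2n)+\theta/(2n+2)$; log-convexity of $L^p(\mathbb{T})$-norms gives pointwise $\|f\|_p^p\le\|f\|_{2n}^{(1-\theta)p}\|f\|_{2n+2}^{\theta p}$, and Hölder in $\mathbb{E}$ with the conjugate exponents $2n/((1-\theta)p)$ and $(2n+2)/(\theta p)$ gives $\mathbb{E}\|f\|_p^p\le(\mathbb{E}\|f\|_{2n}^{2n})^{(1-\theta)p/(2n)}(\mathbb{E}\|f\|_{2n+2}^{2n+2})^{\theta p/(2n+2)}$, into which the even-case bounds plug to give exactly $M^{p-1/2}$. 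For the lower bound, choosing $n$ with $2n\ge p$ and using $\|f\|_\infty\le M$ pointwise, $\|f\|_{2n}^{2n}=\int|f|^{2n-p}|f|^p\le M^{2n-p}\|f\|_p^p$, so $\mathbb{E}\|f\|_p^p\ge M^{p-2n}\mathbb{E}\|f\|_{2n}^{2n}\gtrsim M^{p-1/2}$; this gives \eqref{r1}. The analogue of \eqref{s2} follows since for even $p$ the coefficient products have modulus $\le1$, whence $\sup_{|a_j|\le1}\|\sum_j a_j e^{2\pi i yR(j)}\|_{2n}^{2n}\le\|f\|_{2n}^{2n}$ pointwise, and then interpolating in $\mathbb{E}$ as above; \eqref{s3} follows from \eqref{r1} together with a second-moment estimate for $\|f\|_p^p$ just as in Theorems 1 and 2; and $p=\infty$ is immediate, since $\|f\|_\infty=|f(0)|=M$ deterministically while $\sup_{|a_j|\le1}\|\sum_j a_j e^{2\pi i yR(j)}\|_\infty\le M$. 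The feature special to this theorem --- that $R$, unlike the Poisson process, is not monotone --- only affects the steps where the proof of Theorem 2 used monotonicity of $N$ (for instance in controlling $\sup_{1\le j\le M}|R(j)|$ in the $\|\cdot\|_\infty$-type estimates), where I would substitute Doob's $L^p$ maximal inequality. The hard part is the even-exponent estimate, and within it the two delicate points --- the book-keeping over combinatorial types in the upper bound, and securing $\mathbb{P}[R(G')=0]\gtrsim M^{-1/2}$ with the correct parity in the lower bound --- are what I expect to be the real obstacles.
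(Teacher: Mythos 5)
Your proof is correct, but it takes a genuinely different and considerably more elaborate route than the paper's.

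The paper keeps the computation minimal: it calculates only the $p=2$ moment, writing $\E\|f\|_2^2=\sum_{j,k}\Pp[R(|j-k|)=0]$ and evaluating the central binomial coefficients by Stirling to get $\approx M^{3/2}$. The upper bound for $2<p<\infty$ then comes for free from the pointwise interpolation $\|g\|_p^p\le\|g\|_\infty^{p-2}\|g\|_2^2\le M^{p-2}\|f\|_2^2$, which also immediately gives the analogue of \eqref{s2}. The matching lower bound is obtained not from a moment computation at all, but from the same small-neighborhood-of-origin device used for Theorems 1 and 2: Doob's $L^2$ maximal inequality gives $\sup_{j\le M}|R(j)|\lesssim M^{1/2}$ with probability $\ge3/4$, so on an interval of length $\approx M^{-1/2}$ around $y=0$ one has $|f(y)|\gtrsim M$, yielding $\|f\|_p^p\gtrsim M^{p-1/2}$ there; the analogue of \eqref{s3} follows from this high-probability lower bound together with Markov on the upper bound.

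You instead compute $\E\|f\|_{2n}^{2n}$ directly for every even exponent by expanding, passing to the increment decomposition $\sum_iR(j_i)-\sum_iR(k_i)=\sum_\ell c_\ell\Delta_\ell$, and invoking a local limit bound $\sup_v\Pp[R(s)=v]\lesssim s^{-1/2}$ (plus the matching lower bound for even $s$). Your combinatorial bookkeeping over types, the conditioning trick on the maximal nonzero coefficient $c_{\ell_0}$, the interleaved configuration yielding $\Pp[R(G')=0]\gtrsim M^{-1/2}$, and the interpolation via log-convexity of $L^p$ norms together with Hölder in $\E$ are all sound and produce the correct exponent $p-1/2$. The main thing to tighten is the claim that the analogue of \eqref{s3} follows from \eqref{r1} plus "a second-moment estimate": knowing the expectation of $\|f\|_p^p$ is not enough, and you would still have to run the neighborhood-of-origin argument (with Doob) to get $\|f\|_p^p\gtrsim M^{p-1/2}$ with high probability, just as in Theorems 1 and 2. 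You do gesture at this by flagging Doob's inequality, but note that in your scheme it enters only for \eqref{s3}, not for \eqref{r1} itself. Overall the paper's route is more economical and exploits that for this theorem only the $p=2$ moment is ever needed; your route is more work but is self-contained in a different way, avoids the neighborhood argument for \eqref{r1}, and yields sharper information (the full family of even moments), which could be useful if one later wanted to prove an analogue over a proper subset $A\subset\{1,\dots,M\}$ where the $L^2$-to-$L^\infty$ shortcut no longer closes.
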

	From the facts $\E[|R(j)|] \approx \sqrt{j}$, and $\V[R(j)]=j$ we may heuristically say  $|R(j)|\approx\sqrt{j}$  with high   probability, and from this guess \eqref{r1}. We can summarize \eqref{s1},\eqref{p1},\eqref{r1}  heuristically as follows.   If for a process  $X(j)$ we have $X(j)\approx j^{1-\alpha}, \ 0\leq \alpha \leq 1$ with high probability, then we may see this as  a value  being repeated $M^{\alpha}$  times,  and write
	\begin{equation}\label{h1}
		\mathbb{E}\Big\|\sum_{j=1}^M e^{2\pi i yX(j)}\Big\|_{p}^p \approx 	\mathbb{E}\Big\|M^{\alpha}\sum_{j=1}^{M^{1-\alpha}} e^{2\pi i yj}\Big\|_{p}^p \approx  M^{p\alpha+(p-1)(1-\alpha)}= M^{p-1+\alpha}.
	\end{equation}
	This  is a very useful heuristic to guess what happens for other processes.

	We then  move on to more difficult problems. In these we take the stochastic process to be  Poisson, and investigate subsets of powers, subsets of what we may call  equally spaced powers, and the sets introduced in Green-Ruzsa work \cite{gr} that we will call the Green-Ruzsa sets. We  start with $L^4$ estimates for powers, as these illustrate our ideas in a simpler context.
	
	\begin{theorem}
		Let $\{N(t)\}_{t\geq 0}$ be  a Poisson process of intensity $1$. Let $d\geq 2$ be an integer. Let $A\subset \N$ be a finite nonempty subset.  Then for any $2\leq p\leq 4$ we have
		
		\begin{equation}\label{pp1}
			|A|^{p/2} \leq	\mathbb{E}\Big\|\sum_{j\in A} e^{2\pi i yN(j^d)}\Big\|_{p}^p \lesssim_p 	\begin{cases}|A|^{p/2}\log^{3p/8}(1+|A|)	\ &\text{for} \ d=2  \\  |A|^{p/2} \ &\text{for}  \ d\geq 3. \end{cases}
		\end{equation}
		Analogues of  \eqref{s2},\eqref{s3}  hold for $2\leq p\leq 4$.
		
	\end{theorem}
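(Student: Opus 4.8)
The strategy is to reduce the whole statement to the endpoint exponent $p=4$ and to exploit the diophantine meaning of the fourth moment. Throughout write $f=f_\omega=\sum_{j\in A}e^{2\pi i yN(j^d)}$. The lower bound $|A|^{p/2}\le\E\|f\|_p^p$ is immediate: since $p\ge 2$ we have $\|f\|_p\ge\|f\|_2$ pathwise, so by Jensen's inequality $\E\|f\|_p^p\ge\E\|f\|_2^p\ge(\E\|f\|_2^2)^{p/2}$, and $\|f\|_2^2=\sum_{j,k\in A}\mathbbm{1}[N(j^d)=N(k^d)]\ge|A|$. For the upper bound it is enough, by log-convexity of the $L^q(\T)$-norms in $1/q$ together with Hölder's inequality on the probability space, to establish the two endpoint estimates $\E\|f\|_2^2\lesssim|A|$ and $\E\|f\|_4^4\lesssim|A|^2$ (resp.\ $\lesssim|A|^2\log^{3/2}(1+|A|)$ when $d=2$): writing $\|f\|_p\le\|f\|_2^{1-\theta}\|f\|_4^{\theta}$ with $\theta=2-4/p\in[0,1]$ and applying Hölder on $\Omega$ with the conjugate exponents $2/(p(1-\theta))$ and $4/(p\theta)$ yields $\E\|f\|_p^p\lesssim|A|^{p/2}\log^{3(p-2)/4}(1+|A|)$, and $3(p-2)/4\le 3p/8$ for $2\le p\le 4$. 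The $L^2$ estimate is a one-line computation: $\E\|f\|_2^2=|A|+\sum_{j\ne k}\Pp[N(j^d)=N(k^d)]=|A|+\sum_{j\ne k}e^{-|j^d-k^d|}$, and since the integers $\{j^d:j\in A\}$ are pairwise distinct, for each fixed $j$ the inner sum over $k$ is at most $2\sum_{m\ge1}e^{-m}=O(1)$.

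For the fourth moment I would expand
\[
\E\|f\|_4^4=\sum_{j_1,j_2,k_1,k_2\in A}\Pp\big[N(j_1^d)+N(j_2^d)=N(k_1^d)+N(k_2^d)\big]
\]
and classify each quadruple by the coincidence pattern of $j_1^d,j_2^d,k_1^d,k_2^d$. Expressing the event through the independent Poisson increments $\Delta_i\sim\mathrm{Poisson}(\ell_i)$ over the (at most three) gaps $\ell_i$ between the distinct values turns it into a linear relation $\sum_i\sigma_i\Delta_i=0$ with $\sigma_i\in\{-2,\dots,2\}$, and there are three regimes. (i) \emph{Diagonal} terms, $\{j_1,j_2\}=\{k_1,k_2\}$: probability $1$, at most $2|A|^2$ of them — this is the main contribution $O(|A|^2)$. (ii) Non-diagonal terms for which the relation forces one or more increments over a nonempty interval to vanish (this happens for every configuration in which exactly one index is shared between the two pairs, and for certain collapsed configurations): the probability is then $\le e^{-|a^d-b^d|}$ for some $a\ne b$ in $A$, and summing the resulting geometric series over the free indices, exactly as in the $L^2$ estimate, bounds their total by $O(|A|^2)$. (iii) \emph{Resonant} terms, where the relation reduces to $\Delta=\Delta'$ for two distinct increments, i.e.\ $N(n_2^d)-N(n_1^d)=N(n_4^d)-N(n_3^d)$ with $n_1<n_2\le n_3<n_4$ in $A$ (the equality $n_2=n_3$ being the degenerate variant that involves only three distinct values); the probability equals $\Pp[\mathrm{Poisson}(\lambda_1)=\mathrm{Poisson}(\lambda_2)]$ with $\lambda_1=n_2^d-n_1^d$, $\lambda_2=n_4^d-n_3^d$, and the Poisson local limit theorem together with a Chernoff bound gives $\Pp[\mathrm{Poisson}(\lambda_1)=\mathrm{Poisson}(\lambda_2)]\lesssim(1+\lambda_1+\lambda_2)^{-1/2}e^{-c(\lambda_1-\lambda_2)^2/(\lambda_1+\lambda_2)}$.

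The heart of the proof, and the step I expect to be the main obstacle, is summing the resonant contribution (iii). The Gaussian factor pins $\lambda_1=n_2^d-n_1^d$ to $\lambda_2=n_4^d-n_3^d$ within an error of order $\sqrt{\lambda_2}$, so for fixed $(n_3,n_4)$ one needs a counting estimate for $d$-th powers in short intervals: for any $\ell_0\ge1$, the number of pairs $(n_1,n_2)\in A^2$ with $n_1<n_2$ and $n_2^d-n_1^d$ lying in a prescribed interval of length comparable to $\sqrt{\ell_0}$ around $\ell_0$ is under control, since for each fixed $n_1$ the constraint confines $n_2^d$, and hence (as $n_2^d\gtrsim\ell_0$) confines $n_2$, to an interval of length $\lesssim\sqrt{\ell_0}\,/\,n_2^{d-1}$, which is $\lesssim1$ for $d\ge2$ and shrinks as $d$ grows. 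Combining this with the elementary bound $\sum_{n_3<n_4\in A}(n_4^d-n_3^d)^{-1/2}\lesssim|A|$ (for $d=2$, use $(n_4^2-n_3^2)^{-1/2}\le(n_4(n_4-n_3))^{-1/2}$ and $\sum_{\delta\ge1}\delta^{-1/2}\lesssim\sqrt{n_4}$) controls the resonant sum: for $d\ge3$ it is $O(|A|^2)$ outright, while for $d=2$ the count is borderline and, in organising the intervals, one loses logarithmic factors stemming from the number of representations of an integer as a difference of two squares (the divisor/$r_2$ function), which is what produces the $\log^{3/2}(1+|A|)$ loss at $p=4$, hence $\log^{3p/8}$ after the interpolation above.

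The majorant-property statements are then deduced from \eqref{pp1} exactly as in Theorem 1. Passing to absolute values in the diophantine expansions shows that, for any $|a_j|\le1$, pathwise $\big\|\sum_{j\in A}a_je^{2\pi i yN(j^d)}\big\|_4^4\le\big\|\sum_{j\in A}e^{2\pi i yN(j^d)}\big\|_4^4$ and $\big\|\sum_{j\in A}a_je^{2\pi i yN(j^d)}\big\|_2^2\le\big\|\sum_{j\in A}e^{2\pi i yN(j^d)}\big\|_2^2$; interpolating between these and taking expectations gives the analogue of \eqref{s2} for $2\le p\le4$, and a Chebyshev/second-moment argument upgrades it to the probabilistic statement \eqref{s3}.
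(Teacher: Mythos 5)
Your proposal is correct and takes essentially the same route as the paper: reduce the whole range $2\le p\le 4$ to the endpoints $p=2,4$ by log-convexity of $L^q(\T)$ norms plus H\"older on $\Omega$, expand the fourth moment as a sum over quadruples, classify quadruples by the coincidence/ordering pattern, use the Poisson local-limit/Chernoff estimate for the resonant ones, and close by a lattice point count. The main place where you genuinely diverge from the paper is the lattice count, so let me compare the two there, and also flag two small issues.

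\emph{Classification.} Your scheme---order the four values $\{j_1^d,j_2^d,k_1^d,k_2^d\}$, write the relation in terms of the (independent) increments $\Delta_i$ over the consecutive gaps, and read off the signature $(\sigma_1,\sigma_2,\sigma_3)$---is a compact and uniform replacement for the paper's explicit case analysis into $A_{11},\dots,A_{34}$ and then $A_{321}\cup A_{322}$ with $A_{322}$ subdivided further. It is the same dichotomy (either some nontrivial increment is forced to vanish, or we get $\Delta_1=\Delta_3$), organised more abstractly. This is fine.

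\emph{Probability estimate.} Your bound $\Pp[\mathrm{Poisson}(\lambda_1)=\mathrm{Poisson}(\lambda_2)]\lesssim(1+\lambda_1+\lambda_2)^{-1/2}e^{-c(\lambda_1-\lambda_2)^2/(\lambda_1+\lambda_2)}$ is correct for $\lambda_1,\lambda_2\gtrsim 1$ and sufficiently small $c>0$ (via the Bessel/Skellam formula $e^{-\lambda_1-\lambda_2}I_0(2\sqrt{\lambda_1\lambda_2})$: the asymptotic prefactor is really $(\lambda_1\lambda_2)^{-1/4}$, which exceeds $(\lambda_1+\lambda_2)^{-1/2}$, so the discrepancy must be absorbed into the Gaussian factor---this works but should be said). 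The paper instead keeps the pointwise bound $\sup_t\Pp[N(t)=a]\le(2\pi a)^{-1/2}$ (Lemma 3) separate from the Chernoff tail (Lemma 1); same content, different bookkeeping.

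\emph{Lattice count.} Here you genuinely do something different, and in fact better. The paper fixes the common value $a$ of the two increments, counts pairs $(j,k)$ with $|k^d-j^d-a|<f(a)$ where $f(a)\approx\sqrt{a\log a}$; for $d\ge 3$ it fixes $j$ and gets $\lesssim a^{1/(d-1)}\le\sqrt a$; for $d=2$ it fixes $b=k-j$, and the bound $\sum_{b\le\sqrt{2a}}\big(1+f(a)/b\big)\lesssim\sqrt{a}\log^{3/2}a$ produces the logarithmic loss from a harmonic sum $\sum 1/b$ together with the $\sqrt{\log a}$ in the tail cutoff; it then needs the cap $a<|A|^4$ to convert $\log a$ into $\log|A|$. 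You instead fix $(n_3,n_4)$, set $\ell_0=n_4^d-n_3^d$, and for each $n_1$ observe that $n_2^d$ is pinned in a window of width $O(\sqrt{\ell_0})$, hence $n_2$ is pinned in a window of length $\lesssim\sqrt{\ell_0}/n_2^{d-1}\lesssim 1$ for every $d\ge 2$ (using $n_2^d\gtrsim\ell_0$). This gives $\lesssim|A|$ admissible pairs $(n_1,n_2)$ uniformly in $\ell_0$, after which $\sum_{n_3<n_4}(n_4^d-n_3^d)^{-1/2}\lesssim|A|$ closes the estimate with $O(|A|^2)$ for \emph{all} $d\ge 2$ and no cap in $a$ needed. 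That is a cleaner route, and---if you carry it out---it actually eliminates the logarithmic loss at $d=2$ altogether, which is stronger than what the theorem states.

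\emph{Minor issue: the divisor-function remark.} Your closing attribution of the $d=2$ log loss to ``the number of representations of an integer as a difference of two squares (the divisor/$r_2$ function)'' does not reflect what either proof does. The paper's $\log^{3/2}$ loss comes from a harmonic sum $\sum 1/b$ over $b=k-j$ together with the $\sqrt{\log}$ in the Chernoff cutoff, not from a divisor-function bound, and your own per-$n_1$ count does not produce any log at all. So there is no error that would make the theorem fail, but your stated mechanism for the log factor is not the one operating in either argument, and if you push your own count through carefully you should find you do not need the log loss in the first place.

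Everything else---the lower bound via Jensen, the $L^2$ computation $\E\|f\|_2^2=|A|+\sum_{j\ne k}e^{-|j^d-k^d|}\lesssim|A|$, the deduction of the majorant statements via pathwise monotonicity of even norms, interpolation, and Chebyshev---matches the paper and is correct.
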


	Note that the theorem is proved for an arbitrary $A\subseteq \{1,2,\ldots,M\},$ and therefore proves that Mei-Chu Chang's conjecture is correct in an average sense. The reason why this very difficult conjecture becomes reachable with randomization is that  randomizing is much like averaging, that is, rather than considering just one $j^d$ we consider $N(j^d)$ which heuristically takes  every integer value  in $(j^d-j^{d/2},j^d+j^{d/2})$ each with $j^{-d/2}$ probability.  This gives  a gain of  $j^{-d/2}$ in the  sum we want to evaluate. Combined with a good counting argument  for lattice points in shells this suffices to obtain our result.
	In this theorem proving \eqref{pp1} for  $p=4$ is the key step, all other claims of the theorem then follow. As carrying out this proof we will demonstrate  delicate decompositons, combinatorial arguments, counting arguments and finally the connection to lattice point counting in shell type regions that we will develop further to prove subsequent theorems.

	The next theorem is for $L^4$ norms of equally spaced powers, which we study mainly to compare and contrast   with Theorem 4.

	\begin{theorem}
		Let $\{N(t)\}_{t\geq 0}$ be  a Poisson process of intensity $1$.	Let $r>0$ be a real number. Then
		
		\begin{equation*}\label{lss}	
			\mathbb{E}\Big\|\sum_{j=1}^M e^{2\pi i yN(jM^r)}\Big\|_{2}^2 \approx M,	 \qquad	 M^{2}+M^{3-r} \lesssim \mathbb{E} \Big\|\sum_{j=1}^M e^{2\pi i yN(jM^r)}\Big\|_{4}^4 \lesssim_r M^{2}\log M+M^{3-r}.
		\end{equation*}
		
	\end{theorem}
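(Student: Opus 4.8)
The plan is to expand the two norms, use orthogonality on $\T$ to turn them into sums of probabilities, and then exploit the independent--increments structure of the Poisson process. Write $N_j:=N(jM^r)$ and $\Delta_i:=N(iM^r)-N((i-1)M^r)$, so that the $\Delta_i$ are i.i.d.\ $\mathrm{Pois}(M^r)$ and $N_j=\Delta_1+\dots+\Delta_j$. Since $N_j-N_k\in\Z$, Tonelli and orthogonality give
\[
\mathbb{E}\Big\|\sum_{j=1}^M e^{2\pi i yN_j}\Big\|_2^2=\sum_{j,k=1}^M\mathbb{P}[N_j=N_k].
\]
The diagonal contributes exactly $M$, while for $j\ne k$ the event $\{N_j=N_k\}$ forces the intervening increments to vanish, so $\mathbb{P}[N_j=N_k]=e^{-|j-k|M^r}$ and the off--diagonal total is $\lesssim Me^{-M^r}=o(1)$ because $M^r\to\infty$. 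Hence the $L^2$ quantity is $M+o(1)\approx M$.

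For the fourth moment, similarly
\[
\mathbb{E}\Big\|\sum_{j=1}^M e^{2\pi i yN_j}\Big\|_4^4=\sum_{j_1,j_2,k_1,k_2=1}^M\mathbb{P}[N_{j_1}+N_{j_2}=N_{k_1}+N_{k_2}].
\]
Writing $N_{j_1}+N_{j_2}-N_{k_1}-N_{k_2}=\sum_i c_i\Delta_i$ with $c_i\in\{-2,-1,0,1,2\}$ depending only on the order type of $(j_1,j_2,k_1,k_2)$, I would split the sum according to that order type. Each of the finitely many types falls into one of three classes. Class (a): $\{j_1,j_2\}=\{k_1,k_2\}$ as multisets --- about $2M^2$ quadruples, each of probability $1$, producing the principal term $\approx M^2$. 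Class (b): a type with a coefficient $\pm2$, or in which all surviving $\pm1$ coefficients have the same sign; then $\sum_i c_i\Delta_i=0$ forces one or more independent increments to vanish simultaneously, the probability is $e^{-(\text{positive integer})M^r}$, and the total is $o(1)$. Class (c): the remaining ``interleaved'' types (possibly with a further coincidence among the indices, which only lowers the order), for which after cancelling common increments the equation is exactly $X=Y$ with $X\sim\mathrm{Pois}(aM^r)$ and $Y\sim\mathrm{Pois}(bM^r)$ independent increments over two disjoint index--intervals of lengths $a,b\ge1$, the intervening ``middle gap'' being free. The analytic input for (c) is the classical Skellam estimate
\[
\mathbb{P}[\mathrm{Pois}(\lambda)=\mathrm{Pois}(\mu)]=e^{-\lambda-\mu}I_0(2\sqrt{\lambda\mu})\approx\frac{e^{-(\sqrt\lambda-\sqrt\mu)^2}}{(\lambda\mu)^{1/4}}\qquad(\lambda,\mu\gtrsim1),
\]
which I would record as a lemma, obtained from the asymptotics of $I_0$ (or from $\sup_n\mathbb{P}[\mathrm{Pois}(\lambda)=n]\approx\lambda^{-1/2}$ together with the standard Chernoff tail bound for $\mathrm{Pois}(\lambda)$).

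Putting $\lambda=aM^r$, $\mu=bM^r$, the Gaussian factor confines $b$ to $|\sqrt a-\sqrt b|\lesssim M^{-r/2}$, i.e.\ to $\max(1,\sqrt a\,M^{-r/2})$ values for each $a$, while the number of placements of the two intervals and the middle gap inside $\{1,\dots,M\}$ is $\le(M-a-b)_+^2\le M^2$. Hence class (c) contributes
\[
\lesssim M^2\sum_{a=1}^{M}\frac{\max(1,\sqrt a\,M^{-r/2})}{\sqrt a\,M^{r/2}}\approx M^2\Big(M^{-r/2}\!\!\sum_{a\le M^r}a^{-1/2}+M^{-r}\!\!\!\sum_{M^r<a\le M}\!\!\!1\Big)\approx M^2+M^{3-r},
\]
the second inner sum (hence the $M^{3-r}$ term) being vacuous precisely when $r\ge1$, consistent with $M^{3-r}\le M^2$ there. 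With (a),(b) this yields $\lesssim_r M^2+M^{3-r}$; the extra $\log M$ in the stated upper bound should be the cost of a more robust but lossy step (for instance, replacing the sharp Gaussian factor above by a logarithmic Chernoff cutoff for the Poisson deviations of the two increments), and I do not expect it to be necessary. For the lower bound, class (a) already gives $\gtrsim M^2$; and restricting class (c) to $M^r\le a\le M/3$ and $a\le b\le a+\sqrt a\,M^{-r/2}$, where $(\sqrt\lambda-\sqrt\mu)^2=\mathcal{O}(1)$ so $\mathbb{P}[X=Y]\gtrsim(\lambda\mu)^{-1/4}\approx a^{-1/2}M^{-r/2}$, and keeping $\gtrsim M^2$ placements for each such $(a,b)$, one sums to $\gtrsim M^{3-r}$ (vacuously for $r\ge1$). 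Altogether $\mathbb{E}\|\cdot\|_4^4\gtrsim M^2+M^{3-r}$.

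The routine parts are the $L^2$ computation, the diagonal term $\approx M^2$ of the $L^4$ sum, and the Skellam estimate, which is classical. The main obstacle is the order--type analysis behind classes (b) and (c): one must check that every non--diagonal configuration that is not exponentially negligible really does collapse, after cancelling common increments, to a single two--increment identity $X=Y$ with an explicitly identified pair of lengths $(a,b)$ and a free middle gap, and then carry out the weighted double sum over $(a,b)$ (with the Gaussian weight) precisely enough to see the exponents $M^2$ and $M^{3-r}$ emerge --- and, for the lower bound, to keep the positivity of the individual Skellam probabilities in hand when discarding terms.
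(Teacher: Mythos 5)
Your proposal is correct and follows the same combinatorial backbone as the paper (orthogonality to reduce $\|\cdot\|_4^4$ to a sum of probabilities, classification of quadruples by order type, use of independent increments to factor probabilities over disjoint index intervals, reduction of the intersecting/sign-changing cases to the disjoint case by cancelling the shared increments), but the analytic kernel is genuinely different. The paper bounds the inner sum $\sum_{j<k}\Pp[N((k-j)M^r)=a]$ by $M/\sqrt a + M^{1-r}$, splits $\sum_a$ at $a=M^{2r}$, and the harmonic sum $\sum_{a\le M^{2r}}a^{-1}$ is precisely what produces the $M^2\log M$ term. You instead invoke the Skellam/Bessel asymptotics $\Pp[\mathrm{Pois}(\lambda)=\mathrm{Pois}(\mu)]\approx e^{-(\sqrt\lambda-\sqrt\mu)^2}(\lambda\mu)^{-1/4}$, which retains the Gaussian concentration $|\sqrt a-\sqrt b|\lesssim M^{-r/2}$ and collapses the $a$-sum without any logarithm; if carried out this actually yields $\lesssim M^2+M^{3-r}$, a sharper bound than the theorem states, and shows the $\log M$ is a by-product of the paper's method even at $r=1$, where the paper speculates it may be genuine. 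Your lower bound also takes a different route — the paper uses $\|\cdot\|_4\ge\|\cdot\|_2$ for $M^2$ and mass near $y=0$ for $M^{3-r}$, while you keep the diagonal quadruples and a positive chunk of the interleaved class; both are valid. The one step you flag as the main obstacle (that every sign-changing order type reduces after cancellation to a single identity $X=Y$ over two disjoint increment intervals with a free middle gap, and every non-sign-changing nondiagonal type forces increments to vanish and is exponentially small) is exactly the paper's explicit case check on $A_{31},A_{34},A_{3221},\ldots,A_{3224}$; a clean way to phrase it is that the increment-coefficient vector is a lattice path with two up-steps and two down-steps from $0$ to $0$, and any such path that reaches $\pm2$ is one-signed, so the only mixed-sign paths are the zig-zags $0,\pm1,0,\mp1,0$ which give two disjoint intervals.
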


For $r<1$, plainly $M^{3-r}$ dominates, and the logarithmic loss is unimportant. For $r>1$ it should be possible with sharper methods we used for other theorems to remove the logarithmic loss. For $r=1$ it might not be possible to remove it, for it is felt even in heuristic computations.	
One  issue  that this theorem highlights is  how randomization  eliminates arithmetic progression structure so that instead of having  $M^3$  for the $L^4$ estimate we have $M^{3-r}$. Also proving this theorem is somewhat  easier than proving \eqref{pp1}, for the arithmetic of dealing with these equally spaced powers is  easier.

	  Green and Ruzsa  \cite{gr} introduced the following sets for which the Hardy-Littlewood majorant property fails  polynomially. 
	For any integer $D\geq 5$ and $ k\in \N $ the Green-Ruzsa set  is 
	\begin{equation*}
		\Lambda_{D,k}:= \Big\{\sum_{j=0}^{k-1}d_j D^j \  \Big|    \   d_j\in \{0,1,3\}  \Big\}.
	\end{equation*}
	It would be interesting to see what happens if we randomize these sets via the  Poisson process, and then look at the Hardy-Littlewood property. Our next theorem studies this and provides the answer that Hardy-Littlewood property holds almost surely.  Indeed this follows merely from the sparsity of these sets, so our theorem is stated for a very large class of sparse sets.
	
	\begin{theorem}
		Let $\{N(t)\}_{t\geq 0}$ be  a Poisson process of intensity $1$. Let $A\subset \Z_+ $ be a finite nonempty subset such that  for any $M\in \N,  n\in \Z$ 
		\begin{equation*}
			\big|A \cap [n-M,n+M]\big| \leq C_AM^{\alpha},
		\end{equation*}
	with $C_A$ a constant that depends only on $A$. 	If $\alpha <1/3$, then  for any $2\leq p\leq 4$, and any $\varepsilon>0$ we have
		
		\begin{equation}\label{gr1}
		|A|^{p/2}\leq	\mathbb{E}\Big\|\sum_{j\in A} e^{2\pi i yN(j)}\Big\|_{p}^p \lesssim_{\alpha,C_A} |A|^{p/2},
		\end{equation}
			\begin{equation}\label{gr2}
			\mathbb{E}\sup_{|a_j|\leq 1} \Big\|  \sum_{j\in A} a_je^{2\pi i yN(j)}  \Big\|_{p}^p \lesssim_{\alpha,C_A} \E \Big\|  \sum_{j\in A} e^{2\pi i yN(j)}   \Big\|_{p}^p,
		\end{equation}
	\begin{equation}\label{gr3}
			\lim_{|A|\rightarrow \infty}\mathbb{P}\Big[\sup_{|a_j|\leq 1} \Big\|  \sum_{j\in A} a_je^{2\pi i yN(j)}  \Big\|_{p}^p \geq |A|^{\varepsilon}\Big\|  \sum_{j\in A} e^{2\pi i yN(j)}   \Big\|_{p}^p \Big]=0.
		\end{equation}
	where the limit is taken over $A$ for which $\sup_A C_A<\infty$ and $\sup_A \alpha<1/3$. 
	\end{theorem}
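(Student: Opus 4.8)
The plan is to deduce the entire theorem from the single estimate
\[
\E\Big\|\sum_{j\in A}e^{2\pi i yN(j)}\Big\|_{4}^{4}\;\lesssim_{\alpha,C_A}\;|A|^{2},
\]
i.e.\ the upper bound of \eqref{gr1} at $p=4$. First I would observe that the lower bounds in \eqref{gr1} hold \emph{surely}: $\big\|\sum_{j\in A}e^{2\pi i yN(j)}\big\|_{2}^{2}=\#\{(j,k)\in A^{2}:N(j)=N(k)\}\ge|A|$, and $\|\cdot\|_{p}\ge\|\cdot\|_{2}$ on $\mathbb{T}$ for $p\ge2$, so $\big\|\sum e^{2\pi i yN(j)}\big\|_{p}^{p}\ge|A|^{p/2}$ pointwise in $\omega$, a fortiori in expectation. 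For the upper bound at $2\le p\le4$ I would combine the $p=4$ estimate with the trivial $\E\big\|\sum e^{2\pi i yN(j)}\big\|_{2}^{2}\lesssim|A|$ (for $j\ne k$ one has $\Pp[N(j)=N(k)]=e^{-|j-k|}$, and $\sum_{j\ne k\in A}e^{-|j-k|}\lesssim|A|$ since each difference is attained at most twice), using interpolation of $L^{p}(\mathbb{T})$ norms between $p=2,4$ together with Hölder in $\Omega$. Then \eqref{gr2} follows because $\int_{\mathbb{T}}e^{2\pi i ym}\,dy\in\{0,1\}$ and $|a_{j}|\le1$ give, pointwise in $\omega$, $\sup_{|a_{j}|\le1}\big\|\sum a_{j}e^{2\pi i yN(j)}\big\|_{4}^{4}\le\big\|\sum e^{2\pi i yN(j)}\big\|_{4}^{4}$; combined with $\|\cdot\|_{p}\le\|\cdot\|_{4}$, Jensen, and the lower bound of \eqref{gr1}, this yields \eqref{gr2} for all $2\le p\le4$. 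Finally \eqref{gr3} follows from \eqref{gr2} by Markov's inequality, using once more that $\big\|\sum e^{2\pi i yN(j)}\big\|_{p}^{p}\ge|A|^{p/2}$ surely: the probability in \eqref{gr3} is then at most a constant times $|A|^{-\varepsilon}$, uniformly once $\sup_{A}C_{A}<\infty$ and $\sup_{A}\alpha<1/3$. The $p=\infty$ statements are immediate from $\big\|\sum_{j\in A}e^{2\pi i yN(j)}\big\|_{\infty}=|A|$, attained at $y=0$.

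The heart is thus the $p=4$ bound. I would expand
\[
\E\Big\|\sum_{j\in A}e^{2\pi i yN(j)}\Big\|_{4}^{4}=\sum_{(j_{1},j_{2},k_{1},k_{2})\in A^{4}}\Pp\big[N(j_{1})+N(j_{2})=N(k_{1})+N(k_{2})\big],
\]
order the four indices as $v_{1}\le v_{2}\le v_{3}\le v_{4}$, and rewrite each $N(v_{i})$ through the independent Poisson increments $W_{i}=N(v_{i+1})-N(v_{i})$ (of parameters $v_{i+1}-v_{i}$); the term $N(v_{1})$ cancels. Checking the six sign patterns shows that the equation forces a linear relation on $(W_{1},W_{2},W_{3})$ of exactly one of two shapes: either some nonempty subcollection of the $W_{i}$ must vanish (``exponential type''), or one is in the nested configuration $\{v_{1},v_{4}\}$ versus $\{v_{2},v_{3}\}$, where the equation is precisely $X=Y$ for $X,Y$ independent Poisson of parameters $u:=v_{2}-v_{1}\ge1$ and $v:=v_{4}-v_{3}\ge1$ (``polynomial type''); plus the genuine diagonal (the $j$- and $k$-multisets coincide), of probability $1$. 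The diagonal contributes $\approx|A|^{2}$. For the exponential type the probability is at most $e^{-\Delta}$ for a difference $\Delta$ of two elements of $A$, while the sparsity hypothesis bounds the number of elements of $A$ lying strictly between them by $\lesssim C_{A}\Delta^{\alpha}$; since $\sum_{g\ge1}g^{2\alpha}e^{-g}<\infty$, this contribution is $\lesssim_{\alpha,C_{A}}|A|$.

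The polynomial type is where the hypothesis $\alpha<1/3$ enters, and I expect it to be the main obstacle. It rests on the sharp Skellam estimate: for $X,Y$ independent Poisson of parameters $u,v\ge1$,
\[
\Pp[X=Y]\;\lesssim\;(uv)^{-1/4}\,e^{-(\sqrt{u}-\sqrt{v})^{2}}.
\]
Writing $r(g):=\#\{a\in A:a+g\in A\}$, I would split according to whether $v_{2}=v_{3}$. If $v_{2}=v_{3}$ the configuration is parametrized by $v_{1}<v_{2}<v_{4}$, and since $\sum_{u+v=w}(uv)^{-1/4}e^{-(\sqrt{u}-\sqrt{v})^{2}}\lesssim1$ this part is $\lesssim\sum_{w\ge1}r(w)\lesssim|A|^{2}$, with no condition on $\alpha$. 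If $v_{1}<v_{2}<v_{3}<v_{4}$, the number of quadruples with $v_{2}-v_{1}=u$ and $v_{4}-v_{3}=v$ is at most $r(u)r(v)$, so the contribution is $\lesssim\sum_{u,v\ge1}(uv)^{-1/4}e^{-(\sqrt{u}-\sqrt{v})^{2}}r(u)r(v)$. The Gaussian factor confines $v$ to a window $|v-u|\lesssim\sqrt{u}$; on such a window the window form of the sparsity hypothesis gives $\sum_{v}r(v)\lesssim C_{A}|A|\,u^{\alpha/2}$, so the inner $v$-sum is $\lesssim C_{A}|A|\,u^{\alpha/2-1/2}$, and summing against $r(u)$ dyadically with $\sum_{u\le 2^{k+1}}r(u)\le C_{A}|A|\,2^{(k+1)\alpha}$ produces the geometric series $\sum_{k}(2^{k})^{3\alpha/2-1/2}$, which converges precisely for $\alpha<1/3$. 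Hence the polynomial contribution is $\lesssim_{\alpha,C_{A}}|A|^{2}$, and adding the three contributions gives the desired $p=4$ bound. I note that the cruder estimate $\Pp[X=Y]\lesssim(\max(u,v))^{-1/2}$ would only reach $\alpha<1/4$; it is the Gaussian concentration factor in the Skellam bound, paired with the window form of the sparsity hypothesis, that yields the stated threshold $\alpha<1/3$.
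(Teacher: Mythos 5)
The proposal is correct, and it reproduces the paper's overall deduction structure (lower bound surely from the $L^2$ identity, reduce everything to the $p=4$ upper bound, interpolate in $p$ via Hölder both on $\T$ and in $\Omega$, deduce \eqref{gr2} from the pointwise domination $\sup_{|a_j|\le1}\|\sum a_je^{2\pi iyN(j)}\|_4^4\le\|\sum e^{2\pi iyN(j)}\|_4^4$, and \eqref{gr3} by Chebyshev/Markov). Where you depart genuinely from the paper is in the organization of the $p=4$ bound. The paper does a hands-on decomposition of $A^4$ into coincidence sets $A_{ab}$ (inductive reduction to $p=2$), then into sign classes $A_{31},\ldots,A_{34}$, and further into disjoint/intersecting interval cases, and at the core applies its Lemma~\ref{lem1} (Poisson concentration, giving a window condition via events $\Omega_1,\Omega_2$) and its Lemma~\ref{l3} (Poisson anticoncentration, giving $\Pp[\Omega_*]\lesssim(k_2-j_2)^{-1/2}$) as two separate tools. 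You instead order the four indices $v_1\le\cdots\le v_4$ once and for all, pass immediately to the three independent increments $W_i$, and observe that the six sign patterns fall into exactly two nontrivial shapes: an ``exponential'' shape where some $W_i$ must vanish, and the nested ``Skellam'' shape where the condition is $W_1=W_3$ with independent Poisson $W_1,W_3$. You then invoke the single sharp inequality $\Pp[X=Y]\lesssim(uv)^{-1/4}e^{-(\sqrt u-\sqrt v)^2}$ (a standard consequence of $I_0(z)\lesssim e^z/\sqrt z$, worth stating explicitly in a final write-up), which packages both the anticoncentration factor $(uv)^{-1/4}$ and the Gaussian window $e^{-(\sqrt u-\sqrt v)^2}$ into one bound; the subsequent dyadic summation against $r(\cdot)$ then produces the same geometric series $\sum_k2^{k(3\alpha/2-1/2)}$ and the same threshold $\alpha<1/3$ as the paper. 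Your route is cleaner and avoids the paper's preliminary reductions (assuming all gaps exceed $e^{100}$, etc.), at the cost of invoking Bessel asymptotics; the paper stays closer to first principles (subgaussian tail bounds plus $\sup_t\Pp[N(t)=a]\le1/\sqrt{2\pi a}$) at the cost of more case-splitting. A few of your constants in the exponential-type tally are loose (some pieces are $\lesssim|A|^2$ rather than $\lesssim|A|$), but since $|A|^2$ is the target this is immaterial, and the argument as a whole is sound.
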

	So randomization destroys the specific, fractal-like  structure of the Green-Ruzsa sets  that violates the Hardy-Littlewood property.
	We will show in the pertinent section  that the Green-Ruzsa sets satisfy the sparsity property demanded by the theorem.   In the proof instead of using the arithmetic structure of the set as in the proof of  Theorem 4 we use its sparsity.

	We then arrive at the generic case $p=2n$. In order to prove this case  we make our arguments abstract and streamlined.

	\begin{theorem}
		Let $\{N(t)\}_{t\geq 0}$ be  a Poisson process of intensity $1$. Let $d\geq n\in \N$ be an integer. Let $A\subset \N$ be a finite nonempty subset.  Then  we have
		
		\begin{equation}\label{p2n1}
		\begin{aligned}	
			\mathbb{E}\Big\|\sum_{j\in A} e^{2\pi i yN(j^d)}\Big\|_{2n}^{2n}	\lesssim_{d,n} \begin{cases}\max\big\{|A|^n, |A|^{2n-\frac{d}{2}-1}\log^{1+\frac{1}{d}}(1+ |A|) \big\}   &\text{if} \quad  d\equiv 2  \pmod 4, \\
			\max\big\{|A|^n,	|A|^{2n-\frac{d}{2}-1}\log^{\frac{1}{d}}(1+ |A|)\big\}     &\text{else}. 
			\end{cases}
		\end{aligned}
	\end{equation}
		\end{theorem}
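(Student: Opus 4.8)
The plan is to run the scheme of Theorem 4 for general $n$, organising it so that the probabilistic and number-theoretic inputs are separated. First, expand the $2n$-th power and integrate in $y$ over $\mathbb{T}$; by orthogonality,
\begin{equation*}
\mathbb{E}\Big\|\sum_{j\in A}e^{2\pi i yN(j^{d})}\Big\|_{2n}^{2n}=\sum_{\mathbf{j},\mathbf{k}\in A^{n}}\mathbb{P}\Big[\sum_{i=1}^{n}N(j_i^{d})=\sum_{i=1}^{n}N(k_i^{d})\Big].
\end{equation*}
Since $N$ is additive, any coordinate common to $\mathbf{j}$ and $\mathbf{k}$ cancels from the event, so grouping the pairs $(\mathbf{j},\mathbf{k})$ according to their common multiset of coordinates gives, at the cost of a factor $\lesssim_{n}|A|^{\,n-m}$,
\begin{equation*}
\mathbb{E}\Big\|\sum_{j\in A}e^{2\pi i yN(j^{d})}\Big\|_{2n}^{2n}\lesssim_{n}\sum_{m=0}^{n}|A|^{\,n-m}\sum_{(\mathbf{u},\mathbf{v})}\mathbb{P}\Big[\sum_{i=1}^{m}N(u_i^{d})=\sum_{i=1}^{m}N(v_i^{d})\Big],
\end{equation*}
the inner sum over $(\mathbf{u},\mathbf{v})\in A^{m}\times A^{m}$ with distinct entries and disjoint supports. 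The term $m=0$ is precisely the $|A|^{n}$ in \eqref{p2n1}, so it remains to bound the terms with $1\le m\le n$.

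For such a reduced pair I would expose independent increments as in the earlier proofs: ordering the $2m$ distinct numbers $\{u_i^{d}\}\cup\{v_i^{d}\}$ as $0=t_{0}<t_{1}<\dots<t_{2m}$ and setting $\Delta_{s}=N(t_{s})-N(t_{s-1})$ (independent, $\Delta_{s}\sim\mathrm{Poisson}(t_{s}-t_{s-1})$), the event rewrites as $\sum_{s}c_{s}\Delta_{s}=0$ with integer coefficients $c_{s}\in\{-m,\dots,m\}$ and $c_{1}=0$. A local-limit estimate for sums of independent Poissons together with a Bernstein-type deviation bound then yields
\begin{equation*}
\mathbb{P}\Big[\sum_{i=1}^{m}N(u_i^{d})=\sum_{i=1}^{m}N(v_i^{d})\Big]\ \lesssim_{m}\ \min\!\bigl(1,\Lambda^{-1/2}\bigr)\exp\!\bigl(-c\,D^{2}/\Lambda\bigr)+e^{-cD},
\end{equation*}
where $\Lambda=\sum_{s:\,c_{s}\neq0}(t_{s}-t_{s-1})$ and $D=\bigl|\sum_{i}u_i^{d}-\sum_{i}v_i^{d}\bigr|$. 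One always has $\Lambda\ge D/m$; for the configurations contributing to the main term $\Lambda$ is comparable to $\bigl(\max_{i}\max(u_i,v_i)\bigr)^{d}$, while the remaining ``clustered'' configurations form a sparse family handled by a direct count exactly as for Theorem 4. The upshot is that, writing $T=\max_{i}\max(u_i,v_i)$, the probability is negligible unless $(\mathbf{u},\mathbf{v})$ lies in a shell of thickness $\asymp T^{d/2}$ (up to a logarithmic factor) around the Vinogradov hypersurface $\sum_{i}x_i^{d}=\sum_{i}y_i^{d}$, in which case it is of size $\asymp T^{-d/2}$.

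After a dyadic decomposition in $T$ the whole estimate thus reduces, for each $m$, to a $T^{-d/2}$-weighted count of the lattice points of $A^{m}\times A^{m}$ lying in these shells, and this is the step I expect to be the main obstacle. I would carry it out by peeling off a ``short'' coordinate --- once the other $2m-1$ coordinates are fixed, the remaining $d$-th power is confined to an interval of length $\lesssim T^{d/2}$ up to logarithms, hence to $\lesssim T^{1/2}$ integers up to logarithms --- and then feeding this into standard upper bounds for the number of representations of an integer by $d$-th powers. This is exactly the ``lattice points in regions'' mechanism advertised in the introduction, in the spirit of the Dirichlet divisor problem; the residue of $d$ modulo $4$ enters because for $d\equiv2\pmod{4}$ one has $x^{d}=(x^{d/2})^{2}$ with $d/2$ odd, so $\sum_{i}x_i^{d}-\sum_{i}y_i^{d}$ is a combination of squares, which occupies a sparser set of residues, concentrates its values more, and costs an extra logarithm. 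Summing the dyadic pieces, the scale $T\asymp|A|$ and the index $m=n$ dominate and produce $|A|^{\,2n-d/2-1}$ times the asserted power of $\log(1+|A|)$; taking the maximum with the diagonal contribution $|A|^{n}$ completes the proof of \eqref{p2n1}.
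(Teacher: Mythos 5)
Your high-level plan (probabilistic concentration confines the event to a shell of thickness $\asymp T^{d/2}$ around $\sum_i x_i^d = \sum_i y_i^d$ with probability weight $\asymp T^{-d/2}$, then count lattice points in the shell) is the same as the paper's, but the lattice-point step does not close as you describe it. You propose to fix $2m-1$ coordinates, peel off one ``short'' coordinate ($\lesssim T^{1/2}$ choices up to logarithms), and then invoke ``standard upper bounds for the number of representations of an integer by $d$-th powers'' for the remaining tuple. But representation counts by sums of $\lesssim n$ $d$-th powers in $\lesssim n$ variables are precisely the hard, largely conjectural quantities (Hypothesis $K$, Vinogradov-type mean values, the Salberger--Wooley range $d\geq(2n)^{4n}$) that this theorem is designed to do without; citing them at this step is circular, and with only trivial representation bounds the count is short by a polynomial power of $|A|$. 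The paper's actual mechanism pairs up coordinates: it peels off pairs $(j_i,k_i)$ rather than single variables and applies Theorem 8 --- the two-dimensional shell estimate $|\{(j,k):j<k,\,|k^d-j^d-E|<D\}|\lesssim_d D^{2/d}$ --- to a carefully chosen number $d'=\lceil d/4-\tfrac12\rceil-1$ of pairs, plus one ``refined'' pair at the narrower width $f(|D_r|)\approx\sqrt{|D_r|\log|D_r|}$, treating the remaining pairs trivially. This converts the $2m$-variable shell count into a tractable weighted sum over a single pair, which is then summed directly; nothing resembling a multi-variable representation bound is ever used.

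Your explanation of the extra $\log$ when $d\equiv 2\pmod 4$ is also wrong: it has nothing to do with $x^d=(x^{d/2})^2$, odd exponents, or sparser residue classes. In the paper it is a purely computational artifact: after all substitutions the dominant contribution is $\sum_{1<k\leq|A|}k^{2d'+2-d/2}$, and a short calculation shows the exponent equals $-1$ exactly when $d\equiv 2\pmod 4$, so the harmonic sum produces the extra $\log|A|$. Two further, more minor, discrepancies: the paper handles repeated coordinates by an induction on $n$ (reducing $A^{**}$ to $|A|$ times the $2(n-1)$ moment), not a one-shot ``group by common multiset'' reduction; and it uses the uniform local bound $\Pp[\cdot]\lesssim(k_m^d-j_m^d)^{-1/2}$ of Lemma 7 together with the separate subgaussian tail of Lemma 1, rather than the combined local-CLT-with-Gaussian-factor estimate $\min(1,\Lambda^{-1/2})\exp(-cD^2/\Lambda)+e^{-cD}$ you write down, which would itself require a proof not supplied.
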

	In comparison, the deterministic results for this general case of Salberger and Wooley  \cite{sw} obtain the bound $|A|^{n+\varepsilon}$ for $d\geq (2n)^{4n}$, while in our randomized setting we  obtain it as soon as $d\geq 2n-2$. For the specific $n=3$ case we obtain the  $|A|^{3+\varepsilon}$ bound as soon as $d>3$, while in Salberger  \cite{s} this is $d>25.$ In \cite{s}    improvement over  even the $7/2$ exponent  takes place only for $d>8.$ Also our set $A$ is arbitrary, and not just natural numbers up to $M$.

	We now would like to mention the connection that is made to well-known problems on lattice points in regions. 	After long and arduous decompositions and dealing with auxiliary sums, the main sums of Theorems 4,7 reduce to  a lattice point problem of the following form
	\begin{equation}\label{ss2}
		\sup_{D\leq E\leq D^2}	\big|\big\{ (j,k) \in \N^2: j<k,  \   |k^d-j^d-E| <D \big\}\big|
	\end{equation}
	for large $E,D$.
	This estimate can be obtained either directly, or by viewing this set  as  a margin of the set   
	\begin{equation}\label{os}
		\big\{ (j,k) \in \N^2:   \   0<k^d-j^d <D \big\}.
	\end{equation}
	But estimates on sets of this type do exist in number theory literature. Indeed  they are some of the oldest and most famous problems in number theory. We give a brief background on these, and then state the result to be used to deduce an estimate on \eqref{ss2}.

	We start with the Dirichlet divisor problem. For $x\geq 1$ the divisor summatory function $D(x)$ is defined by
	\begin{equation*}
		D(x):=\sum_{n\leq x}d(n)=|\{ (a,b)\in \N^2:  ab\leq x \}|,
	\end{equation*} 
	where $d(n)$ is the number of positive divisors of $n.$ Plainly this is a lattice point problem.  Dirichlet \cite{dir} proved that 
	\begin{equation*}
		D(x)=x\log x +(2\gamma-1)x+\Delta(x),   \qquad    \Delta(x)=  \mathcal{O}(\sqrt{x}),
	\end{equation*} 
	where $\gamma=0.577\ldots$ is the Euler-Mascheroni constant. 
	The Dirichlet divisor problem is then to study the error term $\Delta(x).$
	Tremendous effort has been poured into this  problem  for two centuries. Dirichlet's result can be obtained by carefully rearranging the  lattice point sum.  Voronoi \cite{vor} obtained $\mathcal{O}(x^{1/3}\log x)$ in 1904. G.H. Hardy \cite{gh1,gh2} showed that $\Delta (x)=\mathcal{O}(x^\theta)$ is not possible for $\theta<1/4$. The exponent $1/3$ is important  in  that it can reached by relatively easy methods, and yet is very difficult to significantly improve. Indeed after a century of effort by many illustrious mathematicians, mainly using exponetial sum methods, the current best  result is due to 
	 M. N. Huxley \cite{hux3} proving  $\Delta (x)=\mathcal{O}(x^{\theta}\log^{\eta} x)$ 
	with $\theta={131}/{416},$ and     $\eta=1+{18627}/{8320}.$

	   When $d=2$,  the linear transformation $(a,b)=T(j,k)=(k-j,k+j)$ makes  estimating \eqref{os} essentially equivalent to the Dirichlet divisor problem, see \cite{khl1}.  For $d\geq 3$ and $x>0$  the sets 
	\begin{equation*}
		R_d(x):=|\{ (j,k)\in \Z^2:  0<|k|^d-|j|^d\leq x    \}|,
	\end{equation*} 
	have been studied as higher power analogues of the Dirichlet divisor problem, and of course  \eqref{os} can be estimated from these via symmetry.
	 We have by  Kratzel \cite{krat2} and Nowak \cite{nowak1,nowak2}
	\begin{equation}\label{R-}
		R_d(x)=A_d x^{2/d}+B_d x^{1/(d-1)}+D_dF_d(x^{1/d})x^{1/d-1/d^2}+ \Delta_d(x),
	\end{equation}
	where   the first term  comes from the area,  the second from the length of the boundary, and their coefficients are explicitly known constants that depend only on $d$. 
	The $x^{1/d-1/d^2}$ term  emerges from the  zero curvature points of  the curve, and these do not exist when $d=2$. The coefficient $D_d$  is another explicitly known constant depending only on $d$, however $F_d$ is a bounded function of $x$
	\begin{equation*}
		 F_d(x)=\sum_{n=1}^{\infty}n^{-1-1/d}\sin\Big(2\pi nx+\frac{\pi}{2d}\Big),  \quad   	|F_d(x)|\leq\sum_{n=1}^{\infty}n^{-1-1/d}.
	\end{equation*}
 Thus  the asymptotic expansion for $R_d(x)$ cannot be developed beyond this term, and this term is $\Oh(x^{1/d-1/d^2})$.   The error term $\Delta_d(x)$ is  analogous to the error term of the Dirichlet divisor problem, and 
	applying the results of Huxley \cite{hux1,hux3}, Nowak \cite{nowak1,nowak2} obtained 
	\begin{equation}\label{nwkr}
		\Delta_d(x) =\Oh(x^{\frac{46}{73}\frac{1}{d}+\varepsilon}), \qquad
	\Delta_d(x) =\Omega(x^{\frac{1}{2d}}).
	\end{equation}

With the help of   these results on $R_d(x)$ we can  deduce  the next theorem on  \eqref{ss2}. But  we will also  show that this theorem can be proven directly without recourse to viewing the set as a margin.
	\begin{theorem}\label{at1}
		Let $E,D\geq 1$ be real numbers, and let $d\geq 3$ be an integer. Then
		\begin{equation}\label{p6l21}
			\sup_{D\leq E\leq D^2}	\big|\big\{ (j,k) \in \N^2: j<k,  \   |k^d-j^d-E| <D \big\}\big|\leq C_d D^{2/d}.
		\end{equation}
		\end{theorem}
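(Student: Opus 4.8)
\emph{Plan of proof.}
The plan is to realise the set in \eqref{p6l21} as a thin ``margin'' between two of the sublevel regions counted by $R_d$, and then to control the resulting difference of values of $R_d$ using the asymptotic expansion \eqref{R-} together with the bound \eqref{nwkr} on the error term $\Delta_d$. First I would pass to the one-sided count $R_d^+(x):=|\{(j,k)\in\N^2:\ j<k,\ k^d-j^d\le x\}|$ for $x\ge 0$ (and $0$ otherwise): splitting $R_d$ according to the signs of $j,k$ and the contribution of the line $j=0$ gives $R_d(x)=4R_d^+(x)+2\lfloor x^{1/d}\rfloor$, so that $R_d^+(x)\le\tfrac14 R_d(x)$. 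Since $k^d-j^d$ is a positive integer, the left-hand side of \eqref{p6l21} is at most $R_d^+(E+D)-R_d^+(E-D-1)$, hence at most $\tfrac14\big(R_d(E+D)-R_d(E-D-1)\big)$ (the floor terms subtract off with the favourable sign). It then suffices to show $R_d(E+D)-R_d(E-D-1)\lesssim_d D^{2/d}$ uniformly for $D\le E\le D^2$.

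For $D$ bounded in terms of $d$ the counted set is finite of size $O_d(1)\le O_d(D^{2/d})$, so I may assume $D$ large. Fix a threshold $x_0(d)$ past which \eqref{R-} and \eqref{nwkr} are valid. If $E-D-1<x_0$ then, using $E\ge D$, one has $E=D+O_d(1)$, hence $R_d(E-D-1)=O_d(1)$ and $E+D=2D+O_d(1)$; plugging $x=2D+O_d(1)$ into \eqref{R-} bounds $R_d(E+D)$, because the four terms there are $\lesssim_d D^{2/d}$, $D^{1/(d-1)}$, $D^{1/d-1/d^2}$, $D^{46/(73d)+\varepsilon}$, and for $d\ge 3$ each exponent $1/(d-1)$, $1/d-1/d^2$, $46/(73d)+\varepsilon$ (for $\varepsilon=\varepsilon(d)$ small) is at most $2/d$. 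If instead $E-D-1\ge x_0$, I would insert \eqref{R-} for both arguments and bound the difference termwise. The $x^{2/d}$- and $x^{1/(d-1)}$-terms give differences of the concave increasing functions $t\mapsto t^{2/d}$, $t\mapsto t^{1/(d-1)}$ over the interval $[E-D-1,E+D]$ of length $2D+1$; since the derivatives $\tfrac2d t^{2/d-1}$, $\tfrac1{d-1}t^{1/(d-1)-1}$ are decreasing (this is where $d\ge 3$ enters), translating the interval down to start at $0$ only enlarges the integral, so these contributions are at most $(2D+1)^{2/d}$ and $(2D+1)^{1/(d-1)}$, both $\lesssim_d D^{2/d}$. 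For the oscillatory term and $\Delta_d$ I would just use the triangle inequality together with $E+D\le 2D^2$: by $|F_d|\le\sum_{n\ge1}n^{-1-1/d}$ and \eqref{nwkr}, the quantities $(E+D)^{1/d-1/d^2}$ and $(E+D)^{46/(73d)+\varepsilon}$ are $\lesssim_d D^{2/d-2/d^2}$ and $\lesssim_d D^{92/(73d)+2\varepsilon}$, each of which is $\lesssim_d D^{2/d}$ once $\varepsilon$ is fixed small in terms of $d$ (here $2/d-2/d^2<2/d$ and $92/73<2$). Adding the cases gives \eqref{p6l21}.

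I expect the only genuinely delicate point to be the first reduction — recognising that although \eqref{ss2} superficially depends on the two parameters $E,D$, it is really a \emph{margin} of the single-parameter family $R_d$, so that the large quantity $(E+D)^{2/d}$, which can be as big as roughly $D^{4/d}$ and far exceeds the target, never appears on its own but only inside a difference over an interval of length $\approx D$; it is exactly the hypothesis $E\le D^2$ that then makes the leading term match the bound $D^{2/d}$. After this reduction the proof is just bookkeeping of exponents, all the arithmetic depth residing in the cited results of Kratzel and Nowak. Finally I would note that \eqref{p6l21} can also be proved directly, without $R_d$: for each $k$ one counts the integers $j<k$ with $j^d$ in an interval of length $2D$ about $k^d-E$, i.e.\ lattice points near the smooth curve $j=(k^d-E)^{1/d}$, whose curvature is controlled; the main term $\sum_k (\text{bandwidth})\approx D\,E^{2/d-1}$ is already $\lesssim_d D^{2/d}$, and van der Corput's estimate for lattice points near convex arcs — the mechanism behind the exponent $1/3$ in the Dirichlet divisor problem — disposes of the remainder, bypassing the deep bound \eqref{nwkr}.
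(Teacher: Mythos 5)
Your argument is correct and coincides with the paper's first proof of Theorem~\ref{at1}: you treat the set as the margin of $R_d$ between $E-D$ and $E+D$, insert the Kr\"{a}tzel--Nowak expansion \eqref{R-} together with the error bound \eqref{nwkr} at both endpoints, and control each term of the resulting difference by elementary calculus (concavity of $t^{2/d}$ and $t^{1/(d-1)}$ where the paper invokes the mean value theorem, to the same effect), absorbing the oscillatory and error terms crudely via $E+D\le 2D^2$. One small correction to your closing aside: the paper's second, direct proof of \eqref{p6l21} does not use van der Corput's lattice-point estimates --- the target bound $C_d D^{2/d}$ is weak enough to follow from purely elementary counting (summing over columns $j$ when $j\le D^{1/d}$ and over the differences $b=k-j$ otherwise, via the monotone auxiliary function $g$), with no exponential sums; it is only the sharper error bounds such as \eqref{nwkr} that require that machinery.
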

	From  the literature on lattice point problems, see for example \cite{gk}, we know that bounds of order $\Oh(x^{\frac{1}{d}})$ are reachable without any recourse to  exponential sums, merely by carefully  decomposing the region, and  judiciously   choosing the direction of the lines with which we  count the  lattice points. We will show  that the same is true for  our Theorem 8. However the proof is considerably long and difficult.

	Let us also look at a special case of this. In this case fixing $E=D^s, \ 1<s\leq 2$ gives some gain in the exponent.
	
	\begin{theorem}\label{at2}
		Let $D\geq 1$ be a real number, and $1<s\leq 2$. Let $d\geq 3$ be an integer. Then	
		
		\begin{equation}\label{p6sl3}
			\big|\big\{ (j,k) \in \N^2: j<k,  \   |k^d-j^d-D^s| <D \big\}\big|\leq \begin{cases} C_d D^{1+s(\frac{2}{d}-1)}  \ \ &\text{if} \ \   1<s\leq \frac{d^2}{d^2-d-1},       \\   C_d D^{\frac{s}{d}(1-\frac{1}{d})}  \ \ &\text{if} \ \  \frac{d^2}{d^2-d-1} \leq s\leq 2.  \end{cases}
		\end{equation}
			\end{theorem}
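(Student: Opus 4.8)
\emph{Proof plan.} The plan is to realise the set counted in \eqref{p6sl3} as a thin annular slice of the higher--power divisor region $R_d$, in the same way that Theorem~\ref{at1} is deduced from \eqref{R-}, and then to exploit the relation $E=D^{s}$ with $s>1$ --- which makes $D$ a strictly smaller power of $E$ --- to pull a genuine saving out of each individual term of the Kr\"atzel--Nowak expansion \eqref{R-}. (An alternative, self--contained route, paralleling the direct proof of Theorem~\ref{at1}, fixes the smaller variable $j$, counts the admissible $k$ via the mean value theorem over dyadic ranges of $j^{d}+E$, and uses the curvature of the curve $k=(j^{d}+E)^{1/d}$ to control the ranges where fewer than one lattice point per $j$ is expected; this bypasses \eqref{R-} but is considerably longer, so I describe the first route.)

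For the reduction, put $\tilde R_d(x):=\big|\{(j,k)\in\N^{2}:\ j<k,\ k^{d}-j^{d}\le x\}\big|$; counting over the four sign choices for $(|j|,|k|)$ and the axis $j=0$ gives $R_d(x)=4\tilde R_d(x)+2\lfloor x^{1/d}\rfloor$, so, writing $E=D^{s}$ and discarding the range where $D$, hence $E$, is bounded in terms of $d$ (where the count is $\Oh_d(1)$),
\begin{equation*}
	\big|\{(j,k)\in\N^{2}:\ j<k,\ |k^{d}-j^{d}-E|<D\}\big|\ \lesssim_{d,\varepsilon}\ \big(R_d(E+D)-R_d(E-D)\big)+\big((E+D)^{1/d}-(E-D)^{1/d}\big)+D^{\varepsilon},
\end{equation*}
where the last two terms bound the lattice points on the axis, respectively exactly on the curves $k^{d}-j^{d}=E\pm D$. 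Since $(E+D)^{1/d}-(E-D)^{1/d}\lesssim_{d}DE^{1/d-1}=D^{\,1-s(d-1)/d}$, which has smaller exponent than anything below, the problem reduces to estimating $R_d(E+D)-R_d(E-D)$ by \eqref{R-}. Throughout I would use the elementary inequality $(1+\rho)^{\beta}-(1-\rho)^{\beta}\lesssim_{\beta}\rho$ for $\beta\in(0,1)$ and $\rho:=D/E\in(0,1)$ (the left side is convex in $\rho$ and vanishes at $\rho=0$), together with $E\le E+D<2E$.

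Substituting \eqref{R-}, I estimate the four differences. The area term is $A_d\big((E+D)^{2/d}-(E-D)^{2/d}\big)\approx_{d}DE^{2/d-1}=D^{\,1+s(2/d-1)}$, the dominant term, equal to the first bound in \eqref{p6sl3}. The length term is $\approx_{d}DE^{1/(d-1)-1}=D^{\,1-s(d-2)/(d-1)}$, of smaller exponent. For the oscillatory term $D_d\big(F_d((E+D)^{1/d})(E+D)^{\alpha}-F_d((E-D)^{1/d})(E-D)^{\alpha}\big)$, $\alpha=\tfrac1d-\tfrac1{d^{2}}$, I would argue differently in the two ranges of $s$: when $s\ge s_{0}:=d^{2}/(d^{2}-d-1)$ the crude bound $|F_d|\le\zeta(1+\tfrac1d)$ already gives $\lesssim_{d}E^{\alpha}=D^{\,s(d-1)/d^{2}}$, which is the second bound in \eqref{p6sl3}; when $s<s_{0}$ I would split it as
\begin{equation*}
	F_d\big((E+D)^{1/d}\big)\big((E+D)^{\alpha}-(E-D)^{\alpha}\big)\ +\ \big(F_d((E+D)^{1/d})-F_d((E-D)^{1/d})\big)(E-D)^{\alpha},
\end{equation*}
bounding the first summand by $\lesssim_{d}DE^{\alpha-1}=D^{\,1+s(\alpha-1)}$ via $|F_d|\le\zeta(1+\tfrac1d)$, and the second by $\lesssim_{d}\big((E+D)^{1/d}-(E-D)^{1/d}\big)^{1/d}E^{\alpha}\approx_{d}\big(DE^{1/d-1}\big)^{1/d}E^{\alpha}=D^{1/d}$, using that $F_d(x)=\sum_n n^{-1-1/d}\sin(2\pi nx+\tfrac{\pi}{2d})$ is H\"older continuous of order $1/d$ (split the series at $n\approx|x-y|^{-1}$). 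Finally, by \eqref{nwkr}, the error term obeys $|\Delta_d(E+D)|+|\Delta_d(E-D)|\lesssim_{d,\varepsilon}E^{46/(73d)+\varepsilon}=D^{\,46s/(73d)+\varepsilon'}$.

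It then remains only to compare exponents, which is pure bookkeeping. The identity $1+s_{0}(2/d-1)=s_{0}(d-1)/d^{2}$ shows the two bounds in \eqref{p6sl3} agree at $s=s_{0}$; since $1+s(2/d-1)$ is decreasing and $s(d-1)/d^{2}$ increasing in $s$, the right side of \eqref{p6sl3} equals $D^{\,1+s(2/d-1)}$ on $(1,s_{0}]$ and $D^{\,s(d-1)/d^{2}}$ on $[s_{0},2]$. One then checks that each contribution above lies under this bound: the length and axis terms since $1-s(d-1)/d\le 1-s(d-2)/d$; the first piece of the split since $\alpha\le 2/d$; the term $D^{1/d}$ since $s<s_{0}<(d-1)/(d-2)$; and the $\Delta_d$ term since $s<s_{0}<73d/(73d-100)$ when $s<s_0$ and $46/(73d)<(d-1)/d^{2}$ when $s\ge s_0$ --- all of which hold for $d\ge 3$. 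The genuinely delicate step is the oscillatory term in the range $s<s_{0}$: the bound $|F_d|\le\zeta(1+\tfrac1d)$ alone only gives $\approx_{d}E^{\alpha}=D^{\,s(d-1)/d^{2}}$, which exceeds the target there, and it is precisely the H\"older-$1/d$ regularity of $F_d$, combined with the split above, that provides the extra factor $(DE^{1/d-1})^{1/d}$ and brings the contribution down to $D^{1/d}$; establishing that regularity and executing the split cleanly is where most of the work lies.
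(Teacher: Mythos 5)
Your overall route --- differencing the Kr\"atzel--Nowak expansion at $E\pm D$ with $E=D^{s}$, using the mean value theorem on each term, and comparing exponents --- is exactly the paper's route. The final bounds you land on are correct. But there is a genuine error in your bookkeeping that generates a large amount of unnecessary work.

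You claim that for $s<s_{0}:=d^{2}/(d^{2}-d-1)$ the crude estimate on the oscillatory term, $|D_dF_d(x^{1/d})x^{1/d-1/d^{2}}|\lesssim_d E^{1/d-1/d^{2}}=D^{s(d-1)/d^{2}}$, ``exceeds the target'' $D^{1+s(2/d-1)}$. It does not. Rearranging $s(d-1)/d^{2}\le 1+s(2/d-1)$ gives precisely $s(d^{2}-d-1)/d^{2}\le 1$, i.e.\ $s\le s_{0}$; so on $(1,s_{0}]$ the crude bound is \emph{at most} the target (with equality exactly at $s_{0}$), and on $[s_{0},2]$ it \emph{is} the target. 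This is why the paper simply folds $D_dF_d(x^{1/d})x^{1/d-1/d^{2}}$ into the $\Oh_d(x^{1/d-1/d^{2}})$ error when passing from \eqref{eta} to \eqref{eta3}, before differencing, and never touches $F_d$ again. Your H\"older-$1/d$ estimate for $F_d$ and the split of $F_d((E+D)^{1/d})(E+D)^{\alpha}-F_d((E-D)^{1/d})(E-D)^{\alpha}$ are correct calculations, and they do give the stronger $D^{1/d}$, but they are answering a problem that isn't there; the ``genuinely delicate step'' you identify is not delicate at all. (The same comment applies to the separate exponent comparisons you run for the $\Delta_d$ term: since $46/(73d)<(d-1)/d^{2}$ for $d\ge3$, that contribution is already dominated by the crude oscillatory bound, hence by the target on the whole range.)

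One further point: discarding only ``$D$ bounded in terms of $d$'' does not cover the regime where $D^{s}<2D$, which can occur for arbitrarily large $D$ when $s$ is close to $1$. There $E-D$ can be small, and applying the asymptotic \eqref{R-} at $E-D$ needs a word. The paper handles this separately: if $D^{s}<2D$ one bounds the count directly by the size of $\{0<k^{d}-j^{d}<3D\}\lesssim_d D^{2/d}$, and then $D^{2/d}\le 2D^{1+s(2/d-1)}$ follows from $D^{s}<2D$. Your argument can absorb this case via an $\Oh_d(1)$ slack in \eqref{R-}, but it should be said explicitly, since you want $C_d$ independent of $s$.
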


The arithmetic problems mentioned here concern regions bounded by hyperbolic curves, and  they have  elliptic analogues in the Gauss circle problem and its higher power variants. Essentially the same methods apply to   both class of problems.  Also analogous problems are studied in $\R^n$ for any $n\geq2$. For excellent expositions of all aspects of  this topic  see the books \cite{gk,hux2,krat} and the survey \cite{ikkn}.

	Our investigations in this article  usher  new directions of research, and  raise many further questions.
	For example it would be interesting to prove  analogues of Theorems 2,3 for arbitrary sets, or to  prove Theorem 7 for processes with less variation. But the most interesting question is passing from these random sums to  deterministic sums.  Observe that
	\begin{equation*}
		\begin{aligned}
	{\bf I}=	\Big\|\sum_{j\in A} e^{2\pi i yj^d}\Big\|_{2n}^{2n}\lesssim 	\mathbb{E}\Big\|\sum_{j\in A} e^{2\pi i yN(j^d)}\Big\|_{2n}^{2n} +	\mathbb{E}\Big\|\sum_{j\in A} e^{2\pi i yj^d}-e^{2\pi i yN(j^d)}\Big\|_{2n}^{2n}={\bf II+III}.
		\end{aligned}
		\end{equation*}
	Since ${\bf III}\lesssim {\bf I}+{\bf II}$ is plain, our results on ${\bf II}$ imply that ${\bf III}$ is dominated by the conjectural bounds on ${\bf I}.$ But is it  any easier to demonstrate 
	these for ${\bf III}$  than for ${\bf I}$?

	The rest of the article is structured as follows. We first  give a preliminaries section in which we  introduce the stochastic processes we use, and prove various lemmas on their properties that will be needed later on.  Then we  prove Theorems 1-7 each in its own section. Our final section will prove the last two theorems on  arithmetic.
	

	\section{Preliminaries}
	
	We start with briefly introducing the three types of processes that we use.  As we will make frequent use of their definitions and basic  properties,  it will be useful to the reader to have them under hand.  We assume all processes in the article to be integer valued, ensuring periodicity for the exponential sums. A  process $\{X_t\}_{t\in I}$, where $I$ is an index set, is  stationary  if joint cumulative distribution functions remain the same under time translation.  This of course means that each $X_t$ has the same distribution, and this will be the only property that we will use to prove Theorem 1.  
	
	A random variable $N$ has Poisson distribution if there exist $\mu>0$ such that for any nonnegative integer $k$
	\begin{equation}\label{pd}
		\Pp[N=k]=e^{-\mu}\frac{\mu^k}{k!}.
	\end{equation}
	Then $N$ has mean and variance $\mu$.
	A process $\{N(t)\}_{t\geq 0}$ is a Poisson process with intensity $\lambda$ if three conditions hold:
	
	{\bf 1.} $N(0)=0$ almost surely.
	
	{\bf 2.} $N(t)$ has independent increments, that is for $0\leq t_1\leq t_2\leq \ldots\leq t_k$ the random variables $N(t_1),N(t_2)-N(t_1),\ldots, N(t_k)-N(t_{k-1})$ are independent.
	
   	{\bf 3.} For any $0 \leq s < t < \infty$ the random variable $N(t)-N(s)$ is a Poisson random variable 
   with mean $\lambda(t - s).$\\
		In this article we will always take the intensity $\lambda$ to be 1.
	
	Let $X_j, j\in \N$ be independent identically distributed Bernoulli variables with $\Pp[X_j=1]=\Pp[X_j=-1]=1/2$. Then simple random walk $\{R(n)\}_{n\in\Z_+}$ is the process given by 
	\[ R(n)=\sum_{j=1}^n X_j  \]
	with the convention that $R(0)=0.$
	This is  a mean zero process with $\V[R(n)]=n.$
	For more information on these processes see any basic probability book such as \cite{ks}.

	Our  lemmas on the Poisson process aim to make rigorous the heuristic that  for an  intensity 1  Poisson process  $\{N(t)\}_{t\geq 0}$,  the random variable $N(j)$ only takes the integer values between $j-\sqrt{j},j+\sqrt{j}$ each with equal probability. With the lemma below, we will carry out the first part of this, that is  $N(j)$ only takes the integer values between $j-\sqrt{j},j+\sqrt{j}$.   Essentially this is because the Poisson process  is an independent increment process, which leads to Gaussian-like concentration around the mean. As such similar results can be proved for more general classes of processes. More words about this after the proof.
	\begin{lemma}\label{lem1}
		Let $\{N(t)\}_{t\geq 0}$ be a Poisson process of intensity 1, and let $m\in \mathbb{N}$. Let $0<\lambda \leq \sqrt{m}.$ Then 
		\begin{equation*}
			\mathbb{P}[|N(m)-m|>\lambda \sqrt{m}]\leq 2e^{-\lambda^2/4}.
		\end{equation*}

	\end{lemma}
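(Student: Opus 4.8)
The plan is to obtain this as the classical Chernoff (exponential moment) bound for the Poisson distribution. By the defining properties of the Poisson process with intensity $1$ together with $N(0)=0$ almost surely, the variable $N(m)=N(m)-N(0)$ is Poisson distributed with mean $m$, so its moment generating function is $\E[e^{tN(m)}]=\exp\bigl(m(e^t-1)\bigr)$ for every $t\in\R$. I would treat the upper and lower deviations separately and add the two resulting bounds.

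For the lower tail, fix $s>0$ and apply Markov's inequality to $e^{-sN(m)}$:
\[
\Pp[N(m)<m-\lambda\sqrt m]\le e^{s(m-\lambda\sqrt m)}\,\E[e^{-sN(m)}]=\exp\bigl(m(e^{-s}-1+s)-s\lambda\sqrt m\bigr).
\]
Using the elementary inequality $e^{-s}-1+s\le s^2/2$, valid for all $s\ge 0$, and then setting $s=\lambda/\sqrt m$ gives the bound $e^{-\lambda^2/2}\le e^{-\lambda^2/4}$.

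For the upper tail, fix $0<t\le 1$ and apply Markov's inequality to $e^{tN(m)}$:
\[
\Pp[N(m)>m+\lambda\sqrt m]\le e^{-t(m+\lambda\sqrt m)}\,\E[e^{tN(m)}]=\exp\bigl(m(e^t-1-t)-t\lambda\sqrt m\bigr).
\]
Here is the one genuine point: a Poisson variable is not sub-Gaussian at all scales, so $e^t-1-t$ is not bounded by a multiple of $t^2$ for all $t$; but restricting to $0<t\le 1$ and noting that $(e^t-1-t)/t^2=\sum_{k\ge 2}t^{k-2}/k!$ is increasing, we get $e^t-1-t\le(e-2)t^2$ on $(0,1]$. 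Substituting this and optimizing over $t$ — the unconstrained optimizer $t=\lambda/\bigl(2(e-2)\sqrt m\bigr)$ lies in $(0,1]$ precisely because $0<\lambda\le\sqrt m$ and $2(e-2)>1$ — yields the bound $\exp\bigl(-\lambda^2/(4(e-2))\bigr)\le e^{-\lambda^2/4}$, using $e-2<1$.

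Combining, $\Pp[|N(m)-m|>\lambda\sqrt m]$ is at most the sum of the two tail bounds, each of which is $\le e^{-\lambda^2/4}$, giving $2e^{-\lambda^2/4}$ as claimed. There is essentially no obstacle beyond the bookkeeping just described: the only thing to watch is keeping the exponential-moment parameter $t$ in the range where the quadratic estimate on $e^t-1-t$ is valid, and observing that the hypothesis $\lambda\le\sqrt m$ is exactly what makes the optimal $t$ admissible. One could alternatively exploit the independent-increment structure of the process directly, but the moment generating function computation is the cleanest route and is the one I would write up.
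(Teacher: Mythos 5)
Your proof is correct and follows essentially the same Chernoff/exponential-moment route as the paper's: split into upper and lower tails, apply Markov's inequality to $e^{\pm tN(m)}$, use the explicit Poisson MGF, and control $e^{\pm t}-1\mp t$ by a quadratic on the range $0<t\le 1$ so that the resulting choice of $t$ (made admissible precisely by $\lambda\le\sqrt m$) yields $e^{-\lambda^2/4}$ on each side. The only cosmetic difference is that the paper bounds $e^t-1-t\le\tfrac34 t^2$ and simply plugs in $t=\lambda/\sqrt m$, whereas you use the slightly sharper constant $e-2$ and optimize $t$; both land inside the needed range and give the stated inequality.
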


	\begin{proof}
		We start with
		\begin{equation*}
			\begin{aligned}
				\mathbb{P}[|N(m)-m|>\lambda \sqrt{m}]=\mathbb{P}[N(m)-m>\lambda \sqrt{m}]+\mathbb{P}[m-N(m)>\lambda \sqrt{m}]= {\bf I}+{\bf II}.
			\end{aligned}
		\end{equation*}
		We first deal with  {\bf I}. Let $t>0$.
		\begin{equation*}
			\begin{aligned}
				{\bf I}=\mathbb{P}[t(N(m)-m)>t\lambda \sqrt{m}]=\mathbb{P}[e^{t(N(m)-m)}>e^{t\lambda \sqrt{m}}] &=\mathbb{P}[e^{t(N(m)-m-\lambda \sqrt{m})}>1]\\ &\leq \mathbb{E}[e^{t(N(m)-m-\lambda \sqrt{m})}] \\ &= e^{-t\lambda \sqrt{m}-tm}\mathbb{E}[e^{tN(m)}].
			\end{aligned}
		\end{equation*}
		The last expectation is of course the moment generating function(MGF) of $N(m)$, and can easily be calculated from \eqref{pd} to be $e^{m(e^t-1)}.$ Notice then that this is just the $m$th power of the MGF of $N(1)$, and this is precisely because of the independent increment property.
		Observe that  for $0<t\leq 1$
			\begin{equation*}
			\begin{aligned}
			e^t=1+t+\frac{t^2}{2}\Big[1 +\frac{t}{3}+\frac{t^2}{4\cdot 3}+\cdots   \Big]\leq 1+t+\frac{t^2}{2}\Big[1 +\frac{1}{3}+\frac{1}{ 3^2}+\cdots   \Big] \leq  1+t+\frac{3}{4}t^2.
			\end{aligned}
		\end{equation*}
		 Plugging in the MGF of $N(m)$ and then using this  yields
		\begin{equation*}
	{\bf I}\leq e^{-t\lambda\sqrt{m}+m(e^t-t-1)}\leq  e^{-t\lambda\sqrt{m} +\frac{3}{4}mt^2  },\qquad  \qquad     0<t\leq 1.
		\end{equation*}
		Here choosing $t=\lambda/\sqrt{m}$ bounds ${\bf I}$ by  $e^{-\lambda^2/4}.$

		Exactly the same steps give 
		\begin{equation*}
			\begin{aligned}
				{\bf II}\leq e^{-t\lambda\sqrt{m}+m(e^{-t}-1+t)}.
			\end{aligned}
		\end{equation*}
	Observe that  for $0<t\leq 1$ 
	\begin{equation*}
		\begin{aligned}
			e^t=1-t+\frac{t^2}{2}-\Big[\frac{t^3}{3!}-\frac{t^4}{4!}\Big]-\Big[\frac{t^5}{5!}-\frac{t^6}{6!}\Big]\cdots   \leq 1-t+\frac{t^2}{2},
		\end{aligned}
	\end{equation*}
as each of the parantheses above is positive.
	From this  
		\begin{equation*}
			\begin{aligned}
				{\bf II}\leq e^{-t\lambda\sqrt{m}+m\frac{t^2}{2}}.
			\end{aligned}
		\end{equation*}
		Choosing  $t=\lambda/\sqrt{m}$ gives the bound ${\bf II}\leq e^{-\lambda^2/2}$, finishing the proof.

	\end{proof}

	If a random variable $X$ for some positive constant $K_1$ satisfies 
	\[ \Pp[|X|\geq t] \leq 2e^{-t^2/K^2_1}\]
	for all $t\geq 0$, it is called subgaussian.  An equivalent definition is that $X$ is subgaussian if  
	\[ \E[e^{tX}] \leq e^{K^2_2t^2}\]
	for all $t\in \R$ and for some constant $K_2.$ So our random variables  $N(m)-m$ behave like subgaussians, but strictly speaking they are not subgaussians as is plain from their moment generating functions $e^{m(e^t-t-1)}$. But for a restricted range of $t$  we see the relevant inequalities satisfied. In \cite{cck} the authors  coined the term \textquotesingle locally subgaussian\textquotesingle \  for  such random variables.

	The next two lemmas handle the rest of the heuristic we presented at the start of the section. They  complement each other nicely.
	\begin{lemma}\label{l3}
		Let $a\in \N$. Then
		\begin{equation*}
			\sup_{t\geq 0} \Pp[N(t)=a]\leq \frac{1}{\sqrt{2\pi a}}.
		\end{equation*}
	\end{lemma}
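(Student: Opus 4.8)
The plan is to bound $\Pp[N(t)=a]=e^{-t}t^a/a!$ uniformly over $t\geq 0$ by first locating the maximizing $t$ and then estimating the resulting quantity via Stirling's formula. First I would fix $a\in\N$ and treat $f(t)=e^{-t}t^a/a!$ as a function of the real variable $t\geq 0$. Differentiating, $f'(t)=e^{-t}t^{a-1}(a-t)/a!$, so $f$ increases on $(0,a)$ and decreases on $(a,\infty)$, whence $\sup_{t\geq 0}f(t)=f(a)=e^{-a}a^a/a!$. Thus the lemma reduces to the purely arithmetic inequality
\begin{equation*}
	\frac{e^{-a}a^a}{a!}\leq \frac{1}{\sqrt{2\pi a}},\qquad\text{equivalently}\qquad a!\geq \sqrt{2\pi a}\,\Big(\frac{a}{e}\Big)^a.
\end{equation*}

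The second step is to invoke the standard lower bound in Stirling's approximation, namely $a!\geq\sqrt{2\pi a}\,(a/e)^a$ for every $a\in\N$ (indeed $a!=\sqrt{2\pi a}\,(a/e)^a e^{\theta_a/(12a)}$ with $0<\theta_a<1$, so the factor $e^{\theta_a/(12a)}>1$ only helps). This is exactly the inequality we need, and it finishes the proof. If one prefers to keep the argument self-contained, this lower bound can be derived from the convexity estimate $\log a! = \sum_{k=1}^{a}\log k \geq \int_{1}^{a}\log x\,dx + \tfrac12\log a = a\log a - a + 1 + \tfrac12\log a$, which already gives $a!\geq e\,\sqrt{a}\,(a/e)^a$; since $e\geq\sqrt{2\pi}$ this is even slightly stronger than required, so no appeal to the precise Stirling constant is strictly necessary.

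There is essentially no hard part here: the only mild subtlety is being careful that the supremum over the continuous parameter $t\geq 0$ is attained at $t=a$ (so that discreteness of $N(t)$'s distribution in $a$ but continuity in $t$ causes no issue), and that the claimed bound is genuinely the tight Stirling-type estimate rather than something weaker. Everything else is a one-line calculus computation followed by a textbook inequality. I would present it in three short moves: (i) compute $\sup_t f(t)=e^{-a}a^a/a!$ by elementary calculus; (ii) recall $a!\geq\sqrt{2\pi a}\,(a/e)^a$; (iii) combine. The lemma then pairs with Lemma~\ref{lem1} exactly as the surrounding text advertises: Lemma~\ref{lem1} confines $N(j)$ to the window $(j-\sqrt j,\,j+\sqrt j)$, while this lemma caps the mass at any single integer by $O(j^{-1/4}\cdot j^{-1/4})=O(1/\sqrt{a})$-type bounds, making precise the heuristic that $N(j)$ spreads its mass roughly uniformly over $\sim\sqrt j$ values.
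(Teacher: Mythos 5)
Your main argument is correct and is essentially identical to the paper's proof: compute $\sup_{t\geq 0} e^{-t}t^a/a!$ by elementary calculus to get $e^{-a}a^a/a!$, then invoke the Stirling lower bound $a!\geq\sqrt{2\pi a}\,(a/e)^a$ (the paper uses the Robbins form, citing \cite{rob}, which gives exactly this).

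However, the parenthetical ``self-contained'' derivation you offer is wrong. Since $\log$ is \emph{concave}, the trapezoidal comparison gives
\begin{equation*}
\tfrac{1}{2}\bigl[\log k+\log(k+1)\bigr]\leq \int_k^{k+1}\log x\,dx,
\end{equation*}
and summing over $1\leq k\leq a-1$ yields $\log a!\leq \int_1^a\log x\,dx + \tfrac12\log a$ --- the opposite of what you wrote. Consequently the inequality $a!\geq e\sqrt{a}\,(a/e)^a$ you derived from it is false for every $a\geq 2$: at $a=2$ it would assert $2\geq 4\sqrt{2}/e\approx 2.08$. This is unsurprising, because $e>\sqrt{2\pi}$, so your claimed bound is strictly \emph{stronger} than the sharp Stirling constant, and the sharp Stirling lower bound cannot be beaten. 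You can safely delete that aside and simply cite the standard Stirling/Robbins bound, as the paper does; the rest of your proof goes through unchanged.
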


	\begin{proof}
		We have  
		\begin{equation*}
			\sup_{t\geq 0}\Pp[N(t)=a]=	\sup_{t\geq 0}   e^{-t}\frac{t^a}{a!}=\frac{1}{a!}\sup_{t\geq 0}   t^ae^{-t}.
		\end{equation*}
		Plainly we are seeking the supremum of a smooth nonnegative function on $[0,\infty)$, and this function vanishes at the endpoints of this interval. So to find the supremum  we differentiate and set it equal to zero. 
		\begin{equation*}
			0=\frac{d}{dt}  \big(t^ae^{-t}\big)=at^{a-1}e^{-t}-t^ae^{-t}=t^{a-1}e^{-t}(a-t).
		\end{equation*}
		Thus supremum is attained at $t=a,$ and is $a^ae^{-a}.$
		   A precise version of Stirling's formula due to Robbins \cite{rob} states that  for any $n\in \N$
		\begin{equation}\label{robs}
			\frac{1}{\sqrt{2\pi n}}e^{-\frac{1}{12n}}\leq  \frac{1}{n!}\cdot \frac{n^n}{e^n}\leq \frac{1}{\sqrt{2\pi n}}e^{-\frac{1}{12n+1}}\leq \frac{1}{\sqrt{2\pi n}}.
		\end{equation}
		This concludes the proof.
		
	\end{proof}

	\begin{lemma}\label{l4}
		Let $t\geq 0$.  Then 
		\begin{equation*}
			\sup_{a\in \mathbb{Z}_+}\Pp[N(t)=a]=\Pp[N(t)=\lfloor t\rfloor ]\leq \min\big\{1,\frac{1}{\sqrt{2\pi \lfloor t\rfloor}  }\big\}.
		\end{equation*}
	\end{lemma}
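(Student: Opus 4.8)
The plan is to first locate the mode of the Poisson law of $N(t)$, and then bound the probability at the mode by recycling the optimization already carried out in the proof of Lemma~\ref{l3}.

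First I would set $n=\lfloor t\rfloor$ and examine the monotonicity of the map $a\mapsto\Pp[N(t)=a]$ on $\mathbb{Z}_+$. From the Poisson mass function \eqref{pd} the ratio of consecutive terms is
\[
\frac{\Pp[N(t)=a+1]}{\Pp[N(t)=a]}=\frac{t}{a+1},
\]
which is $\ge 1$ exactly when $a+1\le t$, i.e.\ when $a\le n-1$, and is $<1$ when $a\ge n$ (here I use $t<n+1$). Hence the sequence is nondecreasing up to $a=n$ and nonincreasing afterwards, so its supremum over $\mathbb{Z}_+$ is attained at $a=n=\lfloor t\rfloor$, which yields the claimed equality.

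For the inequality I would argue as follows. The trivial bound $\Pp[N(t)=n]\le 1$ always holds, and this alone settles the case $t<1$, where $n=0$ and the stated minimum equals $1$. For $t\ge 1$ we have $n\ge 1$, and from the differentiation performed in the proof of Lemma~\ref{l3} the function $s\mapsto s^{n}e^{-s}$ on $[0,\infty)$ attains its maximum $n^{n}e^{-n}$ at $s=n$. Therefore
\[
\Pp[N(t)=n]=\frac{t^{n}e^{-t}}{n!}\le\frac{n^{n}e^{-n}}{n!}\le\frac{1}{\sqrt{2\pi n}},
\]
the last step being Robbins' estimate \eqref{robs}. Taking the minimum of this with the bound $1$ completes the argument.

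The argument presents no genuine obstacle; the only points that require a moment of care are verifying that the mode lands exactly at $\lfloor t\rfloor$ (in particular when $t$ is an integer, where both $n-1$ and $n$ are modes, but $\lfloor t\rfloor=n$ is still a correct choice), and handling the degenerate case $n=0$, where the right-hand side should be read as $\min\{1,+\infty\}=1$ rather than $1/\sqrt{0}$.
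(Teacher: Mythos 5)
Your proof is correct and follows essentially the same route as the paper: examining the ratio of consecutive Poisson masses to locate the mode at $\lfloor t\rfloor$, then invoking Lemma~\ref{l3} (equivalently, re-running its optimization) to obtain the $1/\sqrt{2\pi\lfloor t\rfloor}$ bound. Your explicit treatment of the degenerate case $\lfloor t\rfloor=0$ is a small gain in care, since Lemma~\ref{l3} is stated only for $a\in\N$.
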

	
	\begin{proof}
		We observe that as
		\[ \Pp[N(t)=a]=e^{-t}\frac{t^a}{a!} \]
		the supremum over $a$ depends only on the fraction $t^a/a!$ As 
		\[\frac{t^{a+1}}{a+1!}=\frac{t}{a+1}\frac{t^a}{a!},\]
		as long as $a+1\leq \lfloor t\rfloor $ this fraction increases, and then for $a+1>\lfloor t\rfloor $ it decreases. That is, the last multiplication must be by $t/\lfloor t\rfloor$, with which we obtain $t^{\lfloor t\rfloor }/\lfloor t\rfloor !$. On finding this we apply Lemma \ref{l3} to conclude the proof.	
	\end{proof}

To calculate probabilities  and reduce to a lattice point problem in the general $p=2n$ case we need a number of lemmas.
We start with  the following well known fact on independent random variables. For a proof see Sinai-Koralov \cite{ks}.

\begin{lemma}
	Let $X_1,\ldots, X_n$ be independent random variables, $m_1+\ldots+m_k =
	n$ and $f_1,\ldots, f_k$ be Borel measurable functions of $m_1, ...,m_k$ variables respectively.
	Then the random variables $ f_1(X_1, ..., X_{m_1} ),  f_2(X_{m_1+1}, \ldots, X_{m_1+m_2} ),\ldots,
	f_k(X_{m_1+\ldots+m_{k-1}+1},\ldots, X_n)$ are independent.

\end{lemma}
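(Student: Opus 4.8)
The plan is to reduce the statement to two elementary measure-theoretic facts and then combine them. Write $s_0 = 0$ and $s_j = m_1 + \cdots + m_j$, so that $s_k = n$, and set $Y_j := (X_{s_{j-1}+1}, \ldots, X_{s_j})$, a random vector with values in $\R^{m_j}$; thus the random variables in the statement are $f_1(Y_1), \ldots, f_k(Y_k)$. The first fact I would establish is that the \emph{blocks} $Y_1, \ldots, Y_k$ are themselves independent as random vectors. The second is the general principle that Borel functions of independent random vectors are independent. Granting both, the conclusion follows at once.

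For the first fact, I would show that for all Borel sets $B_j \subseteq \R^{m_j}$,
\[
\Pp\Big[ \bigcap_{j=1}^k \{Y_j \in B_j\} \Big] = \prod_{j=1}^k \Pp[Y_j \in B_j].
\]
For each $j$ the measurable rectangles $A_{s_{j-1}+1} \times \cdots \times A_{s_j}$ with $A_i \in \mathcal{B}(\R)$ form a $\pi$-system generating $\mathcal{B}(\R^{m_j})$, and when every $B_j$ is such a rectangle the factorization is exactly the hypothesis that $X_1, \ldots, X_n$ are independent: the joint probability splits over all $n$ one-dimensional coordinates and is then regrouped block by block. Fixing all but one of the $B_j$ at a time, the collection of sets for which the identity holds is a $\lambda$-system containing this generating $\pi$-system, so Dynkin's $\pi$-$\lambda$ theorem extends it to all Borel $B_j$; iterating over $j$ gives the claim.

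For the second fact, let $B_j \subseteq \R$ be Borel. Since $f_j$ is Borel measurable, $f_j^{-1}(B_j)$ is a Borel subset of $\R^{m_j}$, and $\{f_j(Y_j) \in B_j\} = \{Y_j \in f_j^{-1}(B_j)\}$. Hence, using the first fact,
\begin{align*}
\Pp\Big[ \bigcap_{j=1}^k \{f_j(Y_j) \in B_j\} \Big]
&= \Pp\Big[ \bigcap_{j=1}^k \{Y_j \in f_j^{-1}(B_j)\} \Big] \\
&= \prod_{j=1}^k \Pp[Y_j \in f_j^{-1}(B_j)]
= \prod_{j=1}^k \Pp[f_j(Y_j) \in B_j],
\end{align*}
which is precisely the definition of independence of $f_1(Y_1), \ldots, f_k(Y_k)$.

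The only point that requires a little care is the $\pi$-$\lambda$ bookkeeping in the first fact — fixing all but one coordinate block, verifying the $\lambda$-system axioms, and iterating — but this is entirely routine, being the standard textbook argument that functions of independent variables are independent; we therefore only sketch it here and refer to \cite{ks} for the complete details.
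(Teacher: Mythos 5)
Your proof is correct and complete. The paper does not actually supply a proof of this lemma — it simply cites Koralov--Sinai \cite{ks} — and your argument (independence of the coordinate blocks via a $\pi$-$\lambda$ argument on measurable rectangles, followed by preimages under the Borel maps $f_j$) is precisely the standard textbook proof one finds there, so there is nothing to contrast.
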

Our next lemma is crucial for  calculating probabilities. With this lemma we separate the random variables  into two groups such that the sums over these groups are independent. After this independence is utilized next lemmas
can be brought in to calculate probabilities.

\begin{lemma}
	Let $\{X_t\}_{t\geq 0}$ be an independent increment process. 	Let $n\geq 2$ be an integer, and let $(j_i,k_i), \  1\leq i\leq n$ be nonempty open intervals.  Let $\sigma_1\cup\sigma_2$ be a partition of indices $\{1,2,\dots,n\}$ into  two nonempty subsets.  Suppose no interval indexed by  $\sigma_1$ intersects the intervals indexed by $\sigma_2$ and vice versa. Then the random variables 
	
	\begin{equation*}
		\sum_{i\in \sigma_1} X_{k_i}-X_{j_i}, \qquad   \sum_{i\in \sigma_2} X_{k_i}-X_{j_i}
	\end{equation*}
	are independent.	
\end{lemma}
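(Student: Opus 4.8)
The plan is to realize both sums as linear combinations of one and the same family of independent increments, and then to invoke the preceding lemma on measurable functions of disjoint blocks of independent random variables.

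First I would pass to the common refinement of all the endpoints. Let $t_0 < t_1 < \cdots < t_m$ be the distinct values occurring among $\{j_i,k_i : 1\le i\le n\}$, and set $\Delta_\ell := X_{t_\ell}-X_{t_{\ell-1}}$ for $1\le \ell\le m$; these are independent by the independent-increment property. By telescoping, each difference equals $X_{k_i}-X_{j_i}=\sum_{\ell}\Delta_\ell$, the sum running over those $\ell$ with $[t_{\ell-1},t_\ell]\subseteq[j_i,k_i]$. Hence, writing $S_r:=\{\ell : [t_{\ell-1},t_\ell]\subseteq[j_i,k_i]\ \text{for some}\ i\in\sigma_r\}$ for $r=1,2$ and letting $c_\ell^{(r)}\ge 1$ count the $i\in\sigma_r$ with $[t_{\ell-1},t_\ell]\subseteq[j_i,k_i]$, one obtains
\[ \sum_{i\in\sigma_1}\big(X_{k_i}-X_{j_i}\big)=\sum_{\ell\in S_1}c_\ell^{(1)}\Delta_\ell,\qquad \sum_{i\in\sigma_2}\big(X_{k_i}-X_{j_i}\big)=\sum_{\ell\in S_2}c_\ell^{(2)}\Delta_\ell. \]
So the first sum is a Borel function of $\{\Delta_\ell:\ell\in S_1\}$ and the second of $\{\Delta_\ell:\ell\in S_2\}$.

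The crux is the disjointness $S_1\cap S_2=\emptyset$. If some $\ell$ belonged to both, there would be $i\in\sigma_1$ and $i'\in\sigma_2$ with $[t_{\ell-1},t_\ell]$ contained in both $[j_i,k_i]$ and $[j_{i'},k_{i'}]$; since all endpoints lie in $\{t_0,\dots,t_m\}$ this forces $j_i\le t_{\ell-1}$, $t_\ell\le k_i$ and likewise for $i'$, whence the nonempty open interval $(t_{\ell-1},t_\ell)$ lies in $(j_i,k_i)\cap(j_{i'},k_{i'})$ --- contradicting the hypothesis that no $\sigma_1$-interval meets a $\sigma_2$-interval. (One uses here $t_{\ell-1}<t_\ell$, and that $j_i\le t_{\ell-1}$, $t_\ell\le k_i$ imply $(t_{\ell-1},t_\ell)\subseteq(j_i,k_i)$.)

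Finally I would conclude with the preceding lemma. Reorder $\Delta_1,\dots,\Delta_m$ so that the indices of $S_1$ come first, then those of $S_2$, then the remaining ones --- a permutation of independent variables is still independent --- and apply the lemma to these consecutive blocks with $f_1,f_2$ the relevant linear maps (and, if the leftover block is nonempty, $f_3\equiv 0$). The lemma yields that $f_1(\cdot)$ and $f_2(\cdot)$ are independent, which is exactly the assertion. There is no genuine obstacle: the entire content is the elementary fact $S_1\cap S_2=\emptyset$, which drops out at once after refining by all the endpoints; the only points needing a little care are the interval-containment bookkeeping and the routine reduction of ``independence of a family'' to the consecutive-block formulation of the cited lemma.
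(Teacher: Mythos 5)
Your proof is correct and follows essentially the same route as the paper: telescope each difference over a refinement of endpoints, observe that the resulting increment intervals serving the two sums are disjoint by hypothesis, and then invoke the independent-increment property together with the preceding lemma on functions of disjoint blocks. The only cosmetic difference is that you refine by all endpoints at once (giving one common family of increments $\Delta_\ell$), whereas the paper refines the $\sigma_1$-endpoints and $\sigma_2$-endpoints separately before noting disjointness; your single-refinement version is arguably a bit cleaner but is the same idea.
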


\begin{proof}
	Let us list $j_i,k_i, \ i\in \sigma _1$  with respect to size, and let $t_1\leq t_2\leq\ldots\leq t_{2|\sigma_1|}$ be this list. For the intervals $(t_i,t_{i+1}), \  1\leq i \leq 2|\sigma_1|-1,$ let $ i\in \eta_1\subseteq\{1,2,\ldots,2|\sigma_1|-1 \}$
	be those indices for which $(t_i,t_{i+1})$ lies inside one of $(j_i,k_i), \ i\in \sigma_1$. Then observe that 
	\begin{equation*}
		\sum_{i\in \sigma_1} X_{k_i}-X_{j_i}=	\sum_{i\in \eta_1}d_i(X_{t_{i+1}}-X_{t_{i}})
	\end{equation*}
	for some natural numbers $d_i.$ 	We apply this same procedure to indices in $\sigma_2$ to obtain  intervals $(t_i,t_{i+1}), \ i\in \eta_2$.  Then we observe that the intervals indexed by  $\eta_1,\eta_2$ are all disjoint. Therefore bearing in mind the independent increment property we can apply Lemma 4 to obtain our claim.

\end{proof}		

The next two lemmas allow explicit computation of probabilities in  Theorem 7, as we will need an analogue of Lemmas 2,3 for a linear combination of Poisson random variables. Proofs will follow from the independence of these variables and Lemma 3.	

\begin{lemma}\label{l5}
	Let $N_i$ be  independent random variables of Poisson distribution with mean $\mu_i> 0$, and $d_i\in \N$  where $1\leq i\leq n.$ Let $\mu:=\max_{1\leq i\leq n}\mu_i$. Then  for $a\in \Z_+$
	\begin{equation*}
		\Pp\big[\sum_{i=1}^n d_iN_i=a \big]\leq \min\big\{1, \frac{1}{\sqrt{2\pi\lfloor\mu \rfloor}}  \big\}.
	\end{equation*}
	
\end{lemma}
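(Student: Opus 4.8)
The plan is to reduce the linear-combination statement to the single-variable estimate of Lemma~\ref{l3} by exploiting independence. First I would let $i_0$ be an index achieving $\mu_{i_0}=\mu=\max_i\mu_i$, and split
\begin{equation*}
	\sum_{i=1}^n d_iN_i = d_{i_0}N_{i_0} + S, \qquad S:=\sum_{i\neq i_0} d_iN_i,
\end{equation*}
noting that by Lemma~4 the random variables $N_{i_0}$ and $S$ are independent (the $N_i$ are jointly independent, and $S$ is a Borel function of the $N_i$ with $i\neq i_0$). Conditioning on the value of $S$ gives
\begin{equation*}
	\Pp\Big[\sum_{i=1}^n d_iN_i=a\Big] = \sum_{b\in\Z_+}\Pp[S=b]\,\Pp\big[d_{i_0}N_{i_0}=a-b\big].
\end{equation*}

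Next I would bound each inner factor. For fixed $b$, the event $d_{i_0}N_{i_0}=a-b$ is either impossible (if $d_{i_0}\nmid a-b$ or $a-b<0$) or forces $N_{i_0}$ to equal the single value $(a-b)/d_{i_0}\in\Z_+$; in either case
\begin{equation*}
	\Pp\big[d_{i_0}N_{i_0}=a-b\big] \le \sup_{c\in\Z_+}\Pp[N_{i_0}=c].
\end{equation*}
Now $N_{i_0}$ is a Poisson variable with mean $\mu$, so it has the same law as $N(\mu)$ for the intensity-$1$ Poisson process at time $\mu$; hence by Lemma~\ref{l4},
\begin{equation*}
	\sup_{c\in\Z_+}\Pp[N_{i_0}=c] = \Pp[N_{i_0}=\lfloor\mu\rfloor] \le \min\Big\{1,\tfrac{1}{\sqrt{2\pi\lfloor\mu\rfloor}}\Big\}.
\end{equation*}
(One should also record the trivial bound $\le 1$ to cover the degenerate case $\lfloor\mu\rfloor=0$, which is exactly why the $\min$ appears in the statement.) Plugging this into the conditioning identity and using $\sum_{b\in\Z_+}\Pp[S=b]\le 1$ yields
\begin{equation*}
	\Pp\Big[\sum_{i=1}^n d_iN_i=a\Big] \le \Big(\min\Big\{1,\tfrac{1}{\sqrt{2\pi\lfloor\mu\rfloor}}\Big\}\Big)\sum_{b\in\Z_+}\Pp[S=b] \le \min\Big\{1,\tfrac{1}{\sqrt{2\pi\lfloor\mu\rfloor}}\Big\},
\end{equation*}
which is the claim.

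There is no serious obstacle here; the only point requiring care is the identification of a Poisson variable of mean $\mu$ with $N(\mu)$ so that Lemma~\ref{l4} (rather than the slightly weaker Lemma~\ref{l3}, which needs $\mu\in\N$) can be applied — this is what lets the bound hold for arbitrary real means $\mu_i>0$ and produces the $\lfloor\mu\rfloor$ inside the square root. Everything else is the standard ``condition on the independent complementary sum and absorb'' argument, and the constraint $d_i\in\N$ is used only to guarantee that pinning down the weighted variable pins down $N_{i_0}$ itself.
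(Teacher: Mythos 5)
Your argument is correct and is essentially the same as the paper's: both isolate the factor with the maximal mean, bound $\sup_c \Pp[N_{i_0}=c]$ via Lemma~\ref{l4}, and absorb the rest using independence and the fact that the leftover total probability is at most $1$. The paper phrases the split as a sum over compositions $a_1+\cdots+a_n=a$ of $\prod_i \Pp[N_i=a_i/d_i]$ rather than conditioning on $S=\sum_{i\neq i_0}d_iN_i$, but that is only a notational difference.
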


\begin{proof}
	Without loss of generality assume  $\mu_n=\mu$. We observe 
	\begin{equation*}
		\Pp\big[\sum_{i=1}^n d_iN_i=a \big]=	\sum_{\substack{a_1+a_2+\ldots+a_n=a \\  a_i\in \Z_+}}\Pp\big[ N_i=a_i/d_i, \  1\leq i\leq n \big]=\sum_{\substack{a_1+a_2+\ldots+a_n=a \\  a_i\in \Z_+}}\prod_{i=1}^n \Pp\big[ N_i=a_i/d_i]    .
	\end{equation*}
	Now we can use the assumption that $\mu_n=\mu$  together with Lemma 3 and reduce the number of factors in the product by one. 
	\begin{equation*}
		\begin{aligned}
			\leq\min\big\{1, \frac{1}{\sqrt{2\pi\lfloor\mu \rfloor}}  \big\}\sum_{\substack{a_1+a_2+\ldots+a_n=a \\  a_i\in \Z_+}}\prod_{i=1}^{n-1} \Pp\big[ N_i=a_i/d_i].
		\end{aligned}
	\end{equation*}
	This opens up the possibility of summing over $a_i,\  1\leq i\leq n$.
	\begin{equation*}
	\begin{aligned}
		\leq \min\big\{1, \frac{1}{\sqrt{2\pi\lfloor\mu \rfloor}}  \big\} \sum_{\substack{a_i=0 \\ 1\leq i\leq n-1 }}^a \prod_{i=1}^{n-1} \Pp\big[ N_i=a_i/d_i]   \leq 
		\min\big\{1, \frac{1}{\sqrt{2\pi\lfloor\mu \rfloor}}  \big\}\prod_{i=1}^{n-1}  \sum_{a_i=0 }^a \Pp\big[ N_i=a_i/d_i]. 
	\end{aligned}
\end{equation*}
	As each of the sums in the last term are at most 1, this concludes the proof.	
\end{proof}

This more general lemma will be used in the following specific form  in proving Theorem 7.

\begin{lemma}
	Let $(j_i,k_i), \ 1\leq i\leq n$ be nonempty open intervals, and let $\{N(t)\}_{t\geq0}$ be a Poisson process of intensity 1. Let $m$ be an index at which $k_i-j_i$ becomes maximum. Then for any $a\in \Z_+$
	\begin{equation*}
		\Pp\big[  	\sum_{i=1}^n N(k_i)-N(j_i)=a    \big]\leq \min\big\{1,\frac{1}{\sqrt{2\pi\lfloor (k_m-j_m)/2n\rfloor}}\big\}.
	\end{equation*}
	
\end{lemma}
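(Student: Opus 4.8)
The plan is to reduce to Lemma \ref{l5} by rewriting $\sum_{i=1}^n \big(N(k_i)-N(j_i)\big)$ as a nonnegative integer combination of \emph{independent} Poisson increments of the process, in the same spirit as the proof of Lemma 5, and then to locate among those increments one whose Poisson parameter is at least $(k_m-j_m)/2n$.

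First I would list the $2n$ endpoints $j_1,k_1,\dots,j_n,k_n$ in nondecreasing order as $t_1\le t_2\le\cdots\le t_{2n}$, and for each $l$ with $t_l<t_{l+1}$ set $c_l:=\#\{i:(t_l,t_{l+1})\subseteq(j_i,k_i)\}$. Since such an atom $(t_l,t_{l+1})$ contains no endpoint in its interior, every interval $(j_i,k_i)$ either contains it or is disjoint from it, so $N(k_i)-N(j_i)=\sum_{l:(t_l,t_{l+1})\subseteq(j_i,k_i)}\big(N(t_{l+1})-N(t_l)\big)$; summing over $i$,
\begin{equation*}
\sum_{i=1}^n \big(N(k_i)-N(j_i)\big)=\sum_{l\,:\,c_l\ge 1} c_l\,\big(N(t_{l+1})-N(t_l)\big).
\end{equation*}
The increments $N(t_{l+1})-N(t_l)$ occurring here are independent by the independent-increment property of $\{N(t)\}_{t\ge 0}$ and are Poisson with means $t_{l+1}-t_l>0$, while the $c_l$ are positive integers; hence the right-hand side is exactly of the form handled by Lemma \ref{l5}. (Degenerate atoms with $t_l=t_{l+1}$ contribute the zero variable and are discarded; since the intervals are nonempty, $(j_m,k_m)$ has positive length and at least one nondegenerate atom survives inside it.)

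Next I would bound from below the largest parameter $\mu=\max_{c_l\ge1}(t_{l+1}-t_l)$. The endpoints lying strictly inside $(j_m,k_m)$ are among the at most $2n-2$ endpoints other than $j_m,k_m$, so they cut $(j_m,k_m)$ into at most $2n-1$ atoms, each of which is one of the $(t_l,t_{l+1})$ with $c_l\ge 1$ (being contained at least in $(j_m,k_m)$ itself). As their lengths sum to $k_m-j_m$, the pigeonhole principle gives one atom of length at least $(k_m-j_m)/(2n-1)\ge (k_m-j_m)/2n$, so $\mu\ge (k_m-j_m)/2n$ and $\lfloor\mu\rfloor\ge\lfloor(k_m-j_m)/2n\rfloor$. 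Applying Lemma \ref{l5} and using that $x\mapsto 1/\sqrt{2\pi x}$ is nonincreasing then yields
\begin{equation*}
\Pp\Big[\sum_{i=1}^n \big(N(k_i)-N(j_i)\big)=a\Big]\le \min\Big\{1,\frac{1}{\sqrt{2\pi\lfloor\mu\rfloor}}\Big\}\le \min\Big\{1,\frac{1}{\sqrt{2\pi\lfloor(k_m-j_m)/2n\rfloor}}\Big\},
\end{equation*}
which is the assertion, the bound being the trivial one when $\lfloor(k_m-j_m)/2n\rfloor=0$. There is no genuine obstacle here; the only points needing mild care are the bookkeeping with repeated endpoints and the constant $2n$, which is precisely what the pigeonhole over the $\le 2n-1$ atoms of the longest interval produces.
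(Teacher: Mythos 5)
Your proof is correct and follows essentially the same route as the paper: decompose the endpoints into consecutive atoms, express $\sum_i N(k_i)-N(j_i)$ as a nonnegative-integer combination of independent Poisson increments, pigeonhole to find an atom of length at least $(k_m-j_m)/2n$, and invoke Lemma \ref{l5}. The only cosmetic difference is that you pigeonhole over the atoms lying inside $(j_m,k_m)$ while the paper pigeonholes over all atoms using that their union contains $(j_m,k_m)$; both give the same bound.
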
	

\begin{proof}
	We apply the procedure described in Lemma 5 to obtain the disjoint intervals $(t_i,t_{i+1}), \ i\in \eta$ for a set of indices $ \eta \subseteq\{1,2,\ldots2n-1 \}$ that enable writing
	\begin{equation*}
		\sum_{i=1}^{n} N(k_i)-N(j_i)=	\sum_{i\in \eta}d_i[N(t_{i+1})-N(t_{i})]
	\end{equation*}
	for some natural numbers $d_i.$ Since the union of $(t_i,t_{i+1}), i\in \eta$ is the same as the union of $(j_i,k_i), \ 1\leq i\leq n$, at least one  $(t_i,t_{i+1}), i  \in \eta$ has length at least  $(k_m-j_m)/2n.$
	An application of Lemma 6 concludes the proof.
	
\end{proof}

	The next two lemmas are technical and elementary in nature. 
	They will be needed at certain steps of the proofs of Theorems 4,7.
	
	\begin{lemma}\label{dl}
		Let $A\subseteq \N$ be a finite set, and let $a\in \R, \ b\in \N$ satisfy  $0<a\leq b\leq \min A$. Let  $d\geq 2$ be an integer.   Let $\Phi:[a,\infty)\rightarrow (0,\infty)$ be a decreasing function. Then
		
		\begin{equation*}
			\sum_{\substack{j,k\in A  \\  j<k  }  }\Phi(k^d-j^d)  \leq 	\sum_{\substack{b\leq j<k\leq b+ |A|-1  }  }\Phi(k^d-j^d).  
		\end{equation*}
		
	\end{lemma}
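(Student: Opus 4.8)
The plan is to show that replacing an arbitrary finite set $A \subseteq \N$ by the contiguous block $\{b, b+1, \ldots, b+|A|-1\}$ can only increase the sum $\sum_{j<k}\Phi(k^d-j^d)$, exploiting that $\Phi$ is decreasing and that the block has the smallest possible gaps $k^d-j^d$ for a set of the same cardinality located at least as far left. Write $A = \{a_1 < a_2 < \cdots < a_N\}$ with $N = |A|$, and let $B = \{b_1 < \cdots < b_N\}$ with $b_i := b + i - 1$ be the target block. Since $b \le \min A = a_1$ and the $a_i$ are strictly increasing integers, an easy induction gives $b_i \le a_i$ for every $i$; moreover for $i < \ell$ we have $a_\ell - a_i \ge \ell - i = b_\ell - b_i$.

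The key step is a termwise comparison: for each pair $i < \ell$ I claim $\Phi(a_\ell^d - a_i^d) \le \Phi(b_\ell^d - b_i^d)$. Because $\Phi$ is decreasing on $[a,\infty)$, it suffices to check two things: first, that both arguments lie in the domain $[a,\infty)$, and second, that $b_\ell^d - b_i^d \le a_\ell^d - a_i^d$. For the domain: $b_\ell^d - b_i^d \ge b_i^{d-1}(b_\ell - b_i) \ge b_i \ge b \ge a$ using $d \ge 2$, $b_i \ge b \ge 1$, and $b_\ell - b_i \ge 1$; similarly $a_\ell^d - a_i^d \ge a \ge a$. For the monotonicity of the gap, note that the function $x \mapsto x^d - (x-c)^d$ is increasing in $x$ for fixed $c > 0$ and $d \ge 2$ (its derivative is $d[x^{d-1} - (x-c)^{d-1}] > 0$), so enlarging the base point while keeping the difference fixed enlarges the $d$-power gap; combined with $a_\ell - a_i \ge b_\ell - b_i$ and $a_i \ge b_i$ this yields
\begin{equation*}
a_\ell^d - a_i^d \ge a_\ell^d - (a_\ell - (b_\ell - b_i))^d \ge b_\ell^d - b_i^d,
\end{equation*}
where the first inequality uses $a_\ell - a_i \ge b_\ell - b_i$ (so the subtracted term is no larger) and the second uses $a_\ell \ge b_\ell$ together with the monotonicity just noted. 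Summing the termwise inequality over all pairs $i < \ell$ gives exactly the claimed bound.

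The main thing to be careful about — the only real obstacle — is making the gap comparison $b_\ell^d - b_i^d \le a_\ell^d - a_i^d$ fully rigorous, since it combines two independent moves (shifting the left endpoint right from $b_i$ to $a_i$, and widening the gap from $b_\ell - b_i$ to $a_\ell - a_i$), and one must verify each move is monotone in the correct direction for all $d \ge 2$; both reduce to the elementary fact that $x \mapsto x^d$ is increasing and convex on $(0,\infty)$, so no analytic subtlety arises, just bookkeeping. Everything else is routine: the domain check is a one-line estimate using $d \ge 2$ and $b \ge \min A \ge 1$, and the final summation is immediate once the pairing $i \leftrightarrow i$ between $A$ and $B$ is fixed.
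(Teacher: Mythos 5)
Your proof is correct and takes essentially the same route as the paper: both pair the $i$-th smallest element of $A$ with $b+i-1$, establish the termwise inequality $a_\ell^d-a_i^d\geq (b+\ell-1)^d-(b+i-1)^d$ together with a domain check, and then conclude using that $\Phi$ is decreasing. The only difference is technical — the paper proves the termwise gap inequality in one stroke via the factorization $x^d-y^d=(x-y)\sum_{i} x^{d-1-i}y^{i}$, whereas you split it into two monotone moves (shifting the left endpoint, then widening the gap); both are valid.
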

	
	\begin{proof}
		We list the elements of $A$ into a strictly  increasing sequence $a_1,a_2,\ldots,a_{|A|}.$
		Then for any $1\leq j<k\leq |A|$ we have 
		$a_{j+1}-a_j\geq 1$, implying by telescoping $a_k-a_j\geq k-j.$   Choosing $j=1$, and bearing in mind that $a_1=\min A$, we obtain $a_k\geq b+k-1.$ Combining these
		
		\begin{equation*}
			\begin{aligned}
				a_k^d-a_j^d=\big(a_k-a_j\big)\sum_{i=0}^{d-1}a_k^{d-1-i}a_j^{i}  &\geq \big(k-j\big)\sum_{i=0}^{d-1}(b+k-1)^{d-1-i}(b+j-1)^{i}\\  &=(b+k-1)^d-(b+j-1)^d. 
			\end{aligned}
		\end{equation*}
	Since $d\geq 2$, the last term is not less than $2b$, so both the first and last term lies in the domain of $\Phi.$	As $\Phi$ is decreasing, this allows us to conclude
		\begin{equation*}
			\begin{aligned}
				\sum_{\substack{j,k\in A  \\  j<k  }  }\Phi(k^d-j^d)=	\sum_{\substack{1\leq j,k\leq |A|  \\  j<k  }  }\Phi(a_k^d-a_j^d)&\leq \sum_{\substack{1\leq j<k\leq |A|    }  }\Phi((b+k-1)^d-(b+j-1)^d)  \\ &=\sum_{\substack{b\leq j<k\leq b+|A|-1    }  }\Phi(k^d-j^d).  
			\end{aligned}
		\end{equation*}
	\end{proof}

\begin{lemma}\label{lem2}
	Let  $1\leq C\leq 10$.  
	
	{\bf a.} If $x>e^{50}$, then 
	\begin{equation*}
		\begin{aligned}
			|x-y|\leq C\sqrt{x\log x} \ \  \implies  \ \ |x-y|\leq 2C\sqrt{y\log y}. 
		\end{aligned}
	\end{equation*}

	{\bf b.} If $y>e^{50},x\geq 1$ then
	\begin{equation*}
		\begin{aligned}
			|x-y|\geq 2C\sqrt{x\log x} \ \  \implies \ \
			|x-y|\geq {C}\sqrt{y\log y}.
		\end{aligned}
	\end{equation*}
	
\end{lemma}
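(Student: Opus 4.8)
The plan is to prove both implications by straightforward estimates, exploiting that for $C$ in the given range the constants $2C$ are big enough to absorb the discrepancy between $\log x$ and $\log y$ once $x$ or $y$ exceeds $e^{50}$. The only real content is a careful comparison of $\sqrt{x\log x}$ and $\sqrt{y\log y}$ when $x$ and $y$ are known to be comparable; there is no deep idea, just bookkeeping.

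For part {\bf a}, suppose $x>e^{50}$ and $|x-y|\le C\sqrt{x\log x}$. First I would show $y$ is comparable to $x$: since $C\le 10$ and $x>e^{50}$ one has $C\sqrt{x\log x}\le 10\sqrt{x\cdot x^{1/50}}\le x/2$ (checking $200\sqrt{\log x}\le \sqrt{x}$ for $x>e^{50}$, which is easy since $\log x$ grows much slower than $x$), whence $y\ge x/2$, and in particular $y\ge 1$ and $\log y\ge \log x - \log 2 \ge \tfrac12\log x$ (using $\log x>50$, so $\log 2<\tfrac12\log x$). Then
\begin{equation*}
	\sqrt{x\log x}=\sqrt{2}\cdot\sqrt{x}\cdot\sqrt{\tfrac12\log x}\le \sqrt{2}\cdot\sqrt{2y}\cdot\sqrt{\log y}=2\sqrt{y\log y},
\end{equation*}
so $|x-y|\le C\sqrt{x\log x}\le 2C\sqrt{y\log y}$, as desired. (If one prefers to avoid the slightly wasteful $\sqrt2\cdot\sqrt2$, the same chain with $y\ge x/2$ and $\log y\ge\tfrac12\log x$ gives $x\log x\le 2y\cdot 2\log y=4y\log y$ directly.)

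For part {\bf b}, suppose $y>e^{50}$, $x\ge 1$, and $|x-y|\ge 2C\sqrt{x\log x}$. I would split into two cases according to whether $x$ is small or large relative to $y$. If $x\le y/2$, then $|x-y|\ge y/2$, while $C\sqrt{y\log y}\le 10\sqrt{y\cdot y^{1/50}}\le y/2$ by the same elementary inequality as above (now with $y>e^{50}$), so $|x-y|\ge C\sqrt{y\log y}$ immediately. If instead $x> y/2$, then $x\ge 1$ together with $x>y/2>e^{50}/2>e^{49}$ gives $\log x\ge \log y-\log 2\ge\tfrac12\log y$ as before, hence $2C\sqrt{x\log x}\ge 2C\sqrt{(y/2)(\tfrac12\log y)}=C\sqrt{y\log y}$, and therefore $|x-y|\ge 2C\sqrt{x\log x}\ge C\sqrt{y\log y}$. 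In both cases the conclusion holds.

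The only step requiring any care is the elementary claim that $200\sqrt{\log t}\le\sqrt t$ for $t>e^{50}$, i.e. $4\cdot 10^4\log t\le t$; this is where the threshold $e^{50}$ is used, and it follows since $t\mapsto t/\log t$ is increasing for $t>e$ and at $t=e^{50}$ equals $e^{50}/50$, which comfortably exceeds $4\cdot10^4$ (indeed $e^{50}>5\cdot10^{21}$). I expect this inequality to be the main — and only — obstacle, and it is routine; everything else is just tracking constants and the factor $\log 2$ against the hypothesis $\log x>50$ or $\log y>50$.
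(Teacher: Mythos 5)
Your proof is correct and essentially the same as the paper's: in part (a) both arguments first deduce $y\geq x/2$ from the hypothesis together with $C\sqrt{x\log x}\leq x/2$ (valid for $x>e^{50}$, $C\leq 10$), then compare $\sqrt{x\log x}$ with $\sqrt{y\log y}$; in part (b) both split into the cases $y\geq 2x$ and $y<2x$ and handle them exactly as you do. One small wrinkle worth fixing: the displayed chain $C\sqrt{x\log x}\leq 10\sqrt{x\cdot x^{1/50}}$ is literally false for $x$ near $e^{50}$, since there $\log x=50$ while $x^{1/50}=e<3$; this does not affect correctness because the parenthetical verification you actually rely on, namely $4\cdot 10^{4}\log t\leq t$ for $t>e^{50}$ via monotonicity of $t/\log t$, is correct and directly yields $C\sqrt{t\log t}\leq t/2$, so you should simply drop the $x^{1/50}$ step.
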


\begin{proof}
	To see {\bf a.} observe that $y\geq x-C\sqrt{x\log x}\geq x/2$, thus $x\leq 2y$. 
	As $\sqrt{x\log x}$ is an increasing function, we have $\sqrt{x\log x} \leq \sqrt{2y\log 2y}\leq 2\sqrt{y\log y}$, and we are done.
	
	As for {\bf b.}, if  $y\leq 2x$,  owing to the same reason $\sqrt{y\log y} \leq \sqrt{2x\log 2x}\leq 2\sqrt{x\log x}$, and we are done. If on the other hand $y\geq 2x$, then $y-x\geq y/2$, and since 
	${C}\sqrt{y\log y}\leq y/2$, we are done.

\end{proof}

	
	\section{Stationary processes}

	This section is dedicated to proving Theorem 1. About stationary processes we will only use that the distribution remains the same in time. From harmonic analysis and probability we will use  the techniques that are the mainstay of studying random and deterministic exponential sums.

	\begin{proof}
		Let the probability mass function of the  random variables in our process be  given by $\mu_{X_j}(k)=\mu(k)=\mu_k.$ 
		We will see that \eqref{s2} and \eqref{s3} follow from   \eqref{s1}, and for this we first need to investigate the cases $p=2,\infty$. We start with the latter. Clearly for every $\omega\in \Omega$ by considering a neighborhood of $y=0$
		we have 
		
		\begin{equation}\label{sp1}
			\Big\|\sum_{j\in A}e^{2\pi i yX_j(\omega)}\Big\|_{\infty}=|A|. 
		\end{equation}
		From this  all claims of the theorem for $p=\infty$  follow immediately. So let $1\leq p< \infty$. Also immediate from \eqref{sp1}  are 
		\begin{equation}\label{sp-1}
			\Big\|\sum_{j\in A}e^{2\pi i yX_j(\omega)}\Big\|_{p}\leq |A|, \qquad	\E \Big\|\sum_{j\in A}e^{2\pi i yX_j}\Big\|_{p}^p\leq |A|^p. 
		\end{equation}
		So we only need to show the reverse direction to obtain \eqref{s1}. 
		
		For $p=2$, to this end   we write
		\begin{equation}\label{sp2}
			\begin{aligned}
				\mathbb{E}\Big\|\sum_{j\in A} e^{2\pi i yX_j}\Big\|_{2}^2 =\mathbb{E}\int_{\mathbb{T}}\Big|\sum_{j\in A} e^{2\pi i yX_j}\Big|^2dy&=\mathbb{E}\int_{\mathbb{T}}\sum_{j,k\in A} e^{2\pi i y(X_k-X_j)}dy\\ &= \mathbb{E}\sum_{k\in A}\sum_{j\in A}\int_{\mathbb{T}} e^{2\pi i y(X_k-X_j)}dy.
			\end{aligned}
		\end{equation}
		For  a fixed $\omega$ the integral in the last expression is $1$ if $X_j(\omega)=X_k(\omega)$ and $0$ otherwise. We define  
		
		\begin{equation*}
			U(\omega,k):=|\{ j\in A: X_j(\omega)=k\}|=\sum_{j\in A} \mathbb{I}_{\{X_j=k\}}(\omega),
		\end{equation*}
		where $\mathbb{I}$ denotes the indicator function. For each $k$, as $U(\omega,k)$ is a finite sum of indicator functions it is measurable.  Therefore we have
		\begin{equation*}
			\mathbb{E}U(\omega,k)=\sum_{j\in A} \mathbb{E}\ \mathbb{I}_{\{X_j=k\}}(\omega)=\sum_{j\in A} \mathbb{P}[X_j=k]=|A|\mu_k.
		\end{equation*}
		The Cauchy-Schwarz inequality yields $\mathbb{E}U^2(\omega,k)\geq [\mathbb{E}U(\omega,k)]^2=|A|^2 \mu_k^2$, and applying this we  continue from  \eqref{sp2} as follows to obtain \eqref{s1} for  $p=2$
		
		\begin{equation*}
			= \mathbb{E}\sum_{j\in A} U(\omega,X_j(\omega))=\mathbb{E}\sum_{k\in \mathbb{Z}}U^2(\omega,k) =\sum_{k\in \mathbb{Z}}\mathbb{E}U^2(\omega,k)\geq |A|^2\sum_{k\in \mathbb{Z}} \mu_k^2.
		\end{equation*}
		
		For $2<p<\infty$  we utilize the  $p=2,\infty$ cases:
		\begin{equation*}
			\mathbb{E}\Big\|\sum_{j\in A}e^{2\pi i yX_j}\Big\|_{p}^p \geq  \mathbb{E}\Big\|\sum_{j\in A} e^{2\pi i yX_j}\Big\|_{2}^p  \geq \Big[ \mathbb{E}\Big\|\sum_{j\in A} e^{2\pi i yX_j}\Big\|_{2}^2\Big]^{p/2} \geq |A|^p \Big[\sum_{k\in \mathbb{Z}} \mu_k^2\Big]^{p/2}.
		\end{equation*}
		For $1\leq p<2$  we again utilize the same ideas:
		\begin{equation*}
			\begin{aligned}
				\mathbb{E}\Big\|\sum_{j\in A} e^{2\pi i yX_j}\Big\|_{2}^2  \leq \mathbb{E}     
				\Big\|\sum_{j\in A} e^{2\pi i yX_j}\Big\|_{\infty}^{2-p}
				\int_{\mathbb{T}}\Big|\sum_{j\in A} e^{2\pi i yX_j}\Big|^{p}dy  = |A|^{2-p}\mathbb{E}\Big\|\sum_{j\in A} e^{2\pi i yX_j}\Big\|_{p}^p. 
			\end{aligned}
		\end{equation*}
		Therefore
		
		\begin{equation*}
			\mathbb{E}\Big\|\sum_{j\in A} e^{2\pi i yX_j}\Big\|_{p}^p \geq |A|^p \sum_{k\in \mathbb{Z}} \mu_k^2 . 
		\end{equation*}
		finishing  the proof of \eqref{s1}.
		
		To see \eqref{s2}
		we just need to observe that for any $\omega$ and any sequence in $\{a_j\}_{j\in A}, \ |a_j|\leq 1$
		\begin{equation*}
			\Big\|\sum_{j\in A}a_j e^{2\pi i yX_j(\omega)}\Big\|_{\infty}\leq |A|, \qquad 
			\text{and}  \qquad
			\Big\|\sum_{j\in A}a_j e^{2\pi i yX_j(\omega)}\Big\|_{p}^p\leq |A|^p. 
		\end{equation*}
		We use this to conclude that for $1\leq p<\infty$
		\begin{equation*}\label{}
			\mathbb{E}\sup_{ |a_j|\leq 1}\Big\|\sum_{j\in A}a_j e^{2\pi i yX_j}\Big\|_{p}^p\leq |A|^p, \ \ \ \ \ \   \mathbb{E}\sup_{  |a_j|\leq 1}\Big\|\sum_{j\in A} a_j e^{2\pi i yX_j}\Big\|_{\infty}\leq |A|. 
		\end{equation*}
		Then using \eqref{s1} we obtain \eqref{s2}.

		To handle \eqref{s3} we need to introduce another method.  Let $\epsilon >0$, and let $K$ be such that 
		\begin{equation*}
			\sum_{|k|\leq K} \mu_k >1-\epsilon.
		\end{equation*} 
		Therefore 
		\begin{equation*}
			\mathbb{E}\sum_{j\in A}\mathbb{I}_{\{|X_j|\leq K\}}\geq (1-\epsilon)|A| .
		\end{equation*}
		Then for  the set
		\begin{equation*}
			\Omega_{\epsilon}:=\{\omega\in \Omega: \sum_{j\in A}\mathbb{I}_{\{|X_j|\leq K\}} \geq \frac{9}{10}|A|  \} 
		\end{equation*}
		we have $\mathbb{P}(\Omega_{\epsilon})\geq 1-10\epsilon$.  We observe that for $\omega\in	\Omega_{\epsilon}$ and $0 \leq y\leq   1/100K$ we have
		\begin{equation*}
			\begin{aligned}
				\Big|\sum_{j\in A}e^{2\pi i yX_j(\omega)}\Big|&\geq \Big|\sum_{ |X_j(\omega)|\leq K} e^{2\pi i yX_j(\omega)}\Big|-\Big|\sum_{|X_j(\omega)|> K} e^{2\pi i yX_j(\omega)}\Big|\\  &\geq \Big|\sum_{ |X_j(\omega)|\leq K} \cos{2\pi  yX_j(\omega)}\Big|-\frac{|A|}{10}\\  &\geq \frac{9|A|}{20}-\frac{|A|}{10} >\frac{|A|}{4}.
			\end{aligned}
		\end{equation*}  
		For any  $\omega\in	\Omega_{\epsilon}$, and any $1\leq p< \infty$ we apply this last result to obtain
		\begin{equation*}\label{}
			\Big\|\sum_{j\in A}e^{2\pi i yX_j(\omega)}\Big\|_{p}^p\geq \int_{0}^{{1}/{100K}}\Big|\sum_{j\in A} e^{2\pi i yX_j(\omega)}\Big|^pdy \geq \frac{|A|^p}{4^p100K}.
		\end{equation*}
		Thus we understand that 
		\begin{equation*}
			\mathbb{P}\Big[ \Big\|\sum_{j\in A} e^{2\pi i yX_j(\omega)}\Big\|_{p}^p \geq \frac{|A|^p}{4^p100K} \Big]\geq 1-10\epsilon.
		\end{equation*}
		Therefore for $|A|$ larger than a constant depending only on $\epsilon,\varepsilon, \mu,p$
		\begin{equation*}
			\mathbb{P}\Big[|A|^{\varepsilon} \Big\|\sum_{j\in A} e^{2\pi i yX_j(\omega)}\Big\|_{p}^p \geq |A|^{p+\varepsilon/2} >\sup_{S_A}  \Big\|\sum_{j\in A} a_j e^{2\pi i yX_j(\omega)}\Big\|_{p}^p \Big]\geq 1-10\epsilon.
		\end{equation*}
		Taking limits and considering that  $\epsilon$ is  arbitrary we obtain our result.

	\end{proof}
	
	With the extra assumption that $X_j$ are independent random variables it would be easier to compute \eqref{sp2}. In that case we would bring the expectation inside, and then compute the probabilities exactly. In proving some of our  other theorems this idea will be used in a modified form.


	\section{Poisson processes}

In this section  we prove an analogue of Theorem 1 for Poisson processes. Unfortunately  in this case we have to consider the set $A$ to be of interval form, for the analogue of \eqref{s1} in this case does not hold, that is the left hand side  depends not only on the cardinality but also on the structure of the set.  Proving analogues of \eqref{s2},\eqref{s3} without using an estimate like \eqref{s1} is  very difficult.  As the proof is similar to that of Theorem 1, we  present it briefly.

	\begin{proof}
		As before, the analogues of  \eqref{s2} and \eqref{s3} will follow from   \eqref{p1}, and for this we first need to investigate the cases $p=2,\infty$. We start with the latter. Clearly for every $\omega\in \Omega$ by considering a neighborhood of $y=0$
		we have 
		
		\begin{equation*}\label{pp11}
			\Big\|\sum_{j=1}^M e^{2\pi i yN(j)(\omega)}\Big\|_{\infty}=M. 
		\end{equation*}
		From this all  claims of the theorem for $p=\infty$  follow immediately. So we assume $2\leq p< \infty$.

		For \eqref{p1} with $p=2$ we compute
		\begin{equation*}\label{}
			\begin{aligned}
				\mathbb{E}\Big\|\sum_{j=1}^M  e^{2\pi i yN(j)(\omega)}\Big\|_{2}^2= \mathbb{E} \sum_{j,k=1}^M \int_{\mathbb{T}} e^{2\pi i y[N(j)(\omega)-N(k)(\omega)]}dy&= \sum_{j,k=1}^M\mathbb{P}[N(|j-k|)=0]
				\\ &= \sum_{j,k=1}^Me^{-|j-k|},
			\end{aligned}
		\end{equation*}
		and 
		\begin{equation*}
			M\leq \sum_{j,k=1}^Me^{-|j-k|}=M+ 2\sum_{j=1}^{M-1}(M-j)e^{-j}\leq M+ 2M\sum_{j=1}^{M-1}e^{-j}\leq 3M.
		\end{equation*}
		This finishes \eqref{p1} for $p=2$. Then we extend this to $p>2$. For any sequence $\{a_j\}_{j=1}^M, \ |a_j|\leq 1 $
		\begin{equation}\label{pp2}
				\begin{aligned}
			\Big\|\sum_{j=1}^M a_j e^{2\pi i yN(j)(\omega)}\Big\|_{2}^2 \leq \Big\|\sum_{j=1}^M  e^{2\pi i yN(j)(\omega)}\Big\|_{2}^2,
		\end{aligned}	
	\end{equation}
		therefore 
		\begin{equation}\label{pp-1}
			\begin{aligned}
				\E\sup_{|a_j|\leq 1}\Big\|\sum_{j=1}^M a_je^{2\pi i yN(j)}\Big\|_{p}^p  \leq \E   
				\sup_{|a_j|\leq 1}\Big\|\sum_{j=1}^M a_je^{2\pi i yN(j)}\Big\|_{\infty}^{p-2}
				\int_{\mathbb{T}}\Big|\sum_{j=1}^M a_je^{2\pi i yN(j)}\Big|^{2}dy\leq 3M^{p-1}.
			\end{aligned}
		\end{equation}
		Let $A_M:=\{N(M)>5M\}$. By Lemma 1, $\mathbb{P}[A_M] \leq 2e^{-4M}$. Therefore for $\omega\notin A_M$ 
		
		\begin{equation*}
			\begin{aligned}
				\Big\|\sum_{j=1}^M e^{2\pi i yN(j)(\omega)}\Big\|_{p}^p  \geq \int_{0}^{1/100M}\Big|\sum_{j=1}^M \cos 2\pi  yN(j)(\omega)\Big|^pdy \geq      
				\frac{M^{p-1}}{100\cdot 2^p}.
			\end{aligned}
		\end{equation*}
		Therefore 
		\begin{equation*}
			\begin{aligned}
				\mathbb{E}\Big\|\sum_{j=1}^M e^{2\pi i yN(j)}\Big\|_{p}^p &\geq     
					\frac{M^{p-1}}{100\cdot 2^{p+1}}.
			\end{aligned}
		\end{equation*}
		This concludes   \eqref{p1}, and the analogue of \eqref{s2} for any $p\geq 2$.
		On the other hand by \eqref{pp-1} 
		
		\begin{equation*}
			\mathbb{P}\Big[\sup_{|a_j|\leq 1} \Big\|\sum_{j=1}^M a_je^{2\pi i yN(j)(\omega)}\Big\|_{p}^p \geq 3M^{p-1}\log M \Big]\leq 1/\log M.
		\end{equation*}
		Therefore except for a set of measure at most $1/\log  M+2e^{-4M}$
		\begin{equation*}
			\begin{aligned}
				\sup_{|a_j|\leq 1} \Big\|\sum_{j=1}^M a_je^{2\pi i yN(j)(\omega)}\Big\|_{p}^p \leq  300\cdot 2^p\log M\Big\|\sum_{j=1}^M e^{2\pi i yN(j)(\omega)}\Big\|_{p}^p,    
			\end{aligned}
		\end{equation*}
		and this proves  the analogue of \eqref{s3}.

	\end{proof}


	\section{Random walks}
	
	This section is dedicated to proving Theorem 3. The proof is broadly similar to that of Theorem 2, and the   most important difference is that the simple random walk is not increasing as  opposed to the Poisson process. This is relevant in obtaining lower bounds by considering a neighborhood of the origin, and will be dealt with via Doob's martingale inequality. The other differences are computational.

	\begin{proof}
		As before we will  see the analogues of  \eqref{s2} and \eqref{s3} follow from   \eqref{r1}, and for this we  need to investigate the cases $p=2,\infty$. We start with the latter. Clearly for every $\omega\in \Omega$ by considering a neighborhood of $y=0$
		we have 
		
		\begin{equation}\label{pp12}
			\Big\|\sum_{j=1}^M e^{2\pi i yR(j)(\omega)}\Big\|_{\infty}=M. 
		\end{equation}
		From this all  claims of the theorem for $p=\infty$  follow immediately. Therefore  we assume $2\leq p< \infty$.
			For any sequence $\{a_j\}_{j=1}^M, \ |a_j|\leq 1, $  
		\begin{equation}\label{rw2}
			\begin{aligned}
				\Big\|\sum_{j=1}^M a_j e^{2\pi i yR(j)(\omega)}\Big\|_{2}^2 \leq \Big\|\sum_{j=1}^M  e^{2\pi i yR(j)(\omega)}\Big\|_{2}^2.
			\end{aligned}
		\end{equation}
	We have 
		\begin{equation*}
			\mathbb{E}\Big\|\sum_{j=1}^M  e^{2\pi i yR(j)}\Big\|_{2}^2 =\sum_{j,k=1}^M\mathbb{P}[R(|j-k|)=0]= M+2\sum_{1\leq j<k\leq M}^M\mathbb{P}[R(k-j)=0].
		\end{equation*}
		Bearing in mind that the simple random walk can be zero only at even times, the last sum equals
		\begin{equation*}
			\sum_{j=1}^{\lfloor(M-1)/2\rfloor} (M-2j) \mathbb{P}[R(2j)=0] =
			\sum_{j=1}^{\lfloor(M-1)/2\rfloor} (M-2j) \binom {2j}j 2^{-2j},
		\end{equation*}
		and this can be estimated via  Robbins's  Stirling  formula \eqref{robs}
		\begin{equation*}
			\begin{aligned}
				\leq  \sum_{j=1}^{\lfloor(M-1)/2\rfloor} (M-2j) \frac{1}{\sqrt{\pi j}} \leq M+\int_{1}^{M/2} \frac{M-2x}{\sqrt{x}}dx\leq  2M^{3/2}.
			\end{aligned}
		\end{equation*}
		
		 Combining  this estimate with  \eqref{pp12},\eqref{rw2} we obtain  for $2\leq p<\infty$,
		\begin{equation}\label{rw-1}
			\begin{aligned}
				\E\sup_{  |a_j|\leq 1}\Big\|\sum_{j=1}^M a_je^{2\pi i yR(j)}\Big\|_{p}^p  &\leq   \E \sup_{  |a_j|\leq 1}
				\Big\|\sum_{j=1}^M a_je^{2\pi i yR(j)}\Big\|_{\infty}^{p-2}\Big\|\sum_{j=1}^M a_je^{2\pi i yR(j)}\Big\|_{2}^2  
				\\ &\leq   M^{p-2}       \E 
				\Big\|\sum_{j=1}^M e^{2\pi i yR(j)}\Big\|_{2}^2         \leq 2M^{p-\frac{1}{2}}.
			\end{aligned}
		\end{equation}
		We now want to obtain a lower bound. As opposed to the Poisson process, the simple random walk is not increasing, therefore it is not enough to consider  $R(M)$. We turn around this difficulty using the fact that the random walk is  a martingale. By Doob's martingale inequality
		\begin{equation*}
			\mathbb{E}\sup_{1\leq j \leq M}|R(j)|^2 \leq 4\mathbb{E}|R(M)|^2 =4M.
		\end{equation*}
		Therefore with probability  at least 3/4  we have 
		$\sup_{1\leq j \leq M}|R(j)| \leq 4M^{1/2},$
		and  for such $\omega$

		\begin{equation*}
			\begin{aligned}
				\Big\|\sum_{j=1}^M e^{2\pi i yR(j)(\omega)}\Big\|_{p}^p  \geq \int_{0}^{1/200M^{1/2}}\Big|\sum_{j=1}^M \cos 2\pi  yR(j)(\omega)\Big|^pdy \geq      
				\frac{M^{p-1/2}}{200\cdot 2^p}.
			\end{aligned}
		\end{equation*}
		Therefore 
		\begin{equation*}
			\begin{aligned}
				\mathbb{E}\Big\|\sum_{j=1}^M e^{2\pi i yR(j)}\Big\|_{p}^p &\geq     
			\frac{M^{p-1/2}}{100\cdot 2^{p+2}}.
			\end{aligned}
		\end{equation*}
		This concludes \eqref{r1} and the analogue of   \eqref{s2}. Assuming $M$ to be large, and repeating this Chebyshev type argument  we see that except for a set  of probability at most
		$1/\log^2M$ we have 
		\begin{equation*}
			\begin{aligned}
				\Big\|\sum_{j=1}^M e^{2\pi i yR(j)(\omega)}\Big\|_{p}^p    \geq      
				\frac{M^{p-1/2}}{200\cdot 2^p \log M}.
			\end{aligned}
		\end{equation*}
		On the other hand from \eqref{rw-1}
		\begin{equation*}
			\mathbb{P}\Big[\sup_{|a_j|\leq 1}\Big\|\sum_{j=1}^M a_je^{2\pi i yR(j)(\omega)}\Big\|_{p}^p \geq 2M^{p-1/2}\log M \Big]\leq \frac{1}{\log M}.
		\end{equation*}
		Therefore  for a set of probability at least  $1-2/ \log M$
		\begin{equation*}
			\begin{aligned}
				100\cdot 2^{p+2}\log^2 M\Big\|\sum_{j=1}^M e^{2\pi i yR(j)(\omega)}\Big\|_{p}^p  \geq \sup_{|a_j|\leq 1} \Big\|\sum_{j=1}^M a_je^{2\pi i yR(j)(\omega)}\Big\|_{p}^p, 
			\end{aligned}
		\end{equation*}
		and this proves the analogue of \eqref{s3}.

	\end{proof}

	
	\section{Perturbations of powers I}

This section is dedicated to proving Theorem 4. This theorem is a specific case of Theorem 7. Our aim in proving this case separately is to illustrate our ideas in the simplest and most concrete case possible. Even in this case the proof is rather long and complicated. So briefly lying out our plan will be helpful for the reader.

 The results of the theorem will follow once we prove \eqref{pp1} for $p=2,4.$ The case $p=2$ has already been done for $d=1$  in the proof of  Theorem 2, and the higher $d$ cases are similar. Main issue is to prove the $p=4$ case. In this case there will be 4 variables $j_1,j_2,k_1,k_2$ instead of just 2, and this makes calculating probabilities difficult. Our strategy is  to partition our sum, and first remove the pieces that in essence do not involve 4 different variables. Then we also remove the cases like $j_1<k_1,j_2<k_2$ that can only hold with small probability. Then  we partition our sum into two cases in which   intervals $(j_1,k_1),(k_2,j_2)$ are disjoint, or intersect. The first case by independence will enable calculating probabilities, and after removing tail estimates coming from the Poisson process, we will reduce to a lattice point counting problem.  Finally there will remain the case of   intersecting intervals, and we will show that this can be reduced to the case of nonintersecting intervals by removing the intersection.
	
	\begin{proof}
		We start with  showing \eqref{pp1} for $p=2.$

		\begin{equation*}
			\begin{aligned}
				\mathbb{E}\Big\|\sum_{j\in  A} e^{2\pi i yN(j^d)}\Big\|_{2}^2  = \mathbb{E} \sum_{j,k\in A} \int_{\mathbb{T}} e^{2\pi i y[N(j^d)-N(k^d)]}dy= \sum_{j,k\in A}\mathbb{P}[N(j^d)=N(k^d)]
				= \sum_{j,k\in A}e^{-|j^d-k^d|}.
			\end{aligned}
		\end{equation*}
		We estimate this last term as
		
		\begin{equation}\label{pp4}
			\begin{aligned}
				|A|\leq \sum_{j,k\in A}e^{-|j^d-k^d|}= |A|+ 2\sum_{k\in A}\sum_{\substack {j\in A\\ j<k}}e^{j^d-k^d}&\leq |A|+ 2\sum_{k\in A}\sum_{j=1}^{(k-1)^d}e^{-k^d+j} \\
				&  \leq  |A|+ 4\sum_{j\in A}e^{-d(j-1)^{d-1}-1},
			\end{aligned}
		\end{equation}
	and this final term is bounded by $ |A| +4.$	This concludes \eqref{pp1} for $p=2.$  
		
		We now move on to  $p=4$, after which other results of the theorem will follow. The left hand side of \eqref{pp1} follows immediately from the $p=2$ case, and the Hölder inequality. So we will concentrate on the right hand side. We start with some reductions that will help us later on.  If $|A|\leq 3e^{100}$, then  our result is immediate. If $|A|>3e^{100}$ then  we let $A_0:=A\cap[1, e^{100}]$ . We obtain
		
		\begin{equation*}
			\begin{aligned}
				\mathbb{E}\Big\|\sum_{j\in  A} e^{2\pi i yN(j^d)}\Big\|_{4}^4  &\leq 	8\Big[\mathbb{E}\Big\|\sum_{j\in  A_0} e^{2\pi i yN(j^d)}\Big\|_{4}^4 +\mathbb{E}\Big\|\sum_{j\in A\setminus A_0} e^{2\pi i yN(j^d)}\Big\|_{4}^4\Big].
			\end{aligned}
		\end{equation*}
		The first term in the paranthesis can be bounded by $|A_0|^4$. If we had our   result  for sets  that contain no element in $[1, e^{100}]$, and have more elements than $e^{100}$, then   applying it  would bound the second term by $C|A\setminus A_0|^2.$ Thus the whole right hand side would be bounded by $8(C+e^{200})|A|^2.$ 
		So we may assume that  $A$ contains no element in $[1, e^{100}]$ and has more elements than $e^{100}$.  Here we observed this reduction for $d>2$, but the same reduction is possible for $d=2$ as well.

		We start with transforming our sum 		
		\begin{equation*}
			\begin{aligned}
				\mathbb{E}\Big\|\sum_{j\in  A} e^{2\pi i yN(j^d)}\Big\|_{4}^4  &= \mathbb{E} \sum_{j_1,j_2,k_1,k_2\in A} \int_{\mathbb{T}} e^{2\pi i y[N(j_1^d)+N(j_2^d)-N(k_1^d)-N(k_2^d)]}dy\\ &= \sum_{j_1,j_2,k_1,k_2\in A}\mathbb{P}[N(j_1^d)+N(j_2^d)=N(k_1^d)+N(k_2^d)].
			\end{aligned}
		\end{equation*}
	From this last sum we want to remove the cases when two of these variables are equal. These give rise to auxiliary sums that we  deal with before going to the main sum.  	The set $A^4$ to which any quadruple $(j_1,j_2,k_1,k_2)$ belongs can be written as 
		\[A^4=A_{11}\cup A_{12}\cup A_{21}\cup A_{22}\cup A_3  . \]
		Here $A_{ab}, \ a,b\in \{1,2\}$ is the set of quadruples  $(j_1,j_2,k_1,k_2)$
		for which $j_a=k_b.$ The set $A_3$ contains the remaining elements of $A^4$. 
		We have 
		\begin{equation*}
			\begin{aligned}
				\sum_{ (j_1,j_2,k_1,k_2)\in A_{11}}\mathbb{P}[N(j_1^d)+N(j_2^d)=N(k_1^d)+N(k_2^d)]&= \sum_{ j_1,j_2,k_2\in A}\mathbb{P}[N(j_2^d)=N(k_2^d)]\\ &=|A|\sum_{ j,k\in A}\mathbb{P}[N(j^d)=N(k^d)],
			\end{aligned}
		\end{equation*}
		and this last sum has been estimated above. For $A_{12},A_{21},A_{22}$ via the same argument we get the same result. Therefore it remains to handle $A_3$. We  decompose
		\[A_3=A_{31}\cup A_{32}\cup A_{33} \cup A_{34},   \]
		\begin{equation*}
			\begin{aligned}
				A_{31}&:=\{(j_1,j_2,k_1,k_2)\in A_3: j_1>k_1, j_2>k_2  \}, \quad
				A_{32}:=\{(j_1,j_2,k_1,k_2)\in A_3: j_1>k_1,  j_2<k_2  \},\\
				A_{33}&:=\{(j_1,j_2,k_1,k_2)\in A_3: j_1<k_1,  j_2>k_2  \}, \quad
				A_{34}:=\{(j_1,j_2,k_1,k_2)\in A_3: j_1<k_1,  j_2<k_2  \}.     
			\end{aligned}
		\end{equation*}
		Summing over $A_{31},A_{34}$ is easy, and we first handle these.
		\begin{equation*}
			\begin{aligned}
				\sum_{  A_{31}}\mathbb{P}[N(j_1^d)+N(j_2^d)=N(k_1^d)+N(k_2^d)]&=\sum_{  A_{31}}\mathbb{P}[N(j_1^d)-N(k_1^d)=N(k_2^d)-N(j_2^d)]. 
			\end{aligned}
		\end{equation*}
		As  $N(t)$ is increasing we must have $N(j_1^d)-N(k_1^d)\geq 0$ and $N(k_2^d)-N(j_2^d)\leq 0$. Therefore for these to be equal  they must both be zero. Hence
		\begin{equation*}
			\begin{aligned}
				\leq \sum_{  A_{31}}\mathbb{P}[N(j_1^d)-N(k_1^d)=0]\leq   |A|^2\sum_{\substack {j,k\in A\\ j<k}} \mathbb{P}[N(j^d)=N(k^d)].  
			\end{aligned}
		\end{equation*}
		We estimated this last sum above to be bounded by 2. The set $A_{34}$ is handled the same way.

		We now move on to  $A_{32}.$ This, together with its symmetric counterpart $A_{33}$, represent the most important, generic cases.  We have
		\begin{equation*}
			\begin{aligned}
				\sum_{  A_{32}}\mathbb{P}[N(j_1^d)+N(j_2^d)=N(k_1^d)+N(k_2^d)]&=\sum_{  A_{32}}\mathbb{P}[N(j_1^d)-N(k_1^d)=N(k_2^d)-N(j_2^d)]. 
			\end{aligned}
		\end{equation*}
		We decompose $A_{32}=A_{321}\cup A_{322}$, in the first of which  are contained those $(j_1,j_2,k_1,k_2)$ for which the intervals  $(k_1,j_1),(j_2,k_2)$  are disjoint, and in the second those for which they intersect. We first sum over $A_{321}$ which can be written as
		\begin{equation*}
			\begin{aligned}
				\sum_{  A_{321}} \mathbb{P}[N(j_1^d)-N(k_1^d)=N(k_2^d)-N(j_2^d)] 
				=&\sum_{  A_{321}}\sum_{a=0}^{\infty} \mathbb{P}[N(j_1^d)-N(k_1^d)=a=N(k_2^d)-N(j_2^d)]\\
				=&\sum_{a=0}^{\infty} \sum_{  A_{321}} \mathbb{P}[N(j_1^d)-N(k_1^d)=a]\Pp[N(k_2^d)-N(j_2^d)=a].
					\end{aligned}
			\end{equation*}
				This can be bounded by 
					\begin{equation*}
					\begin{aligned}
				\sum_{a=0}^{\infty}\sum_{ \substack{j_1,k_1 \in A \\j_1>k_1}}\sum_{\substack{j_2,k_2 \in A \\ k_2>j_2}}\mathbb{P}[N(j_1^d)-N(k_1^d)=a]\Pp[N(k_2^d)-N(j_2^d)=a]
				= 
				\sum_{a=0}^{\infty}\Big[\sum_{ \substack{j,k\in A \\j<k}}\mathbb{P}[N(k^d-j^d)=a]\Big]^2 . 
			\end{aligned}
		\end{equation*}
		If we could show that the inner sum is bounded by a constant $C(A)$ that may depend on $A$ but is independent of $a$, then
		\begin{equation*}
			\begin{aligned}
				\leq C(A)\sum_{a=0}^{\infty}\sum_{   \substack{j,k\in A \\j<k}}\mathbb{P}[N(k^d-j^d)=a]=C(A)\sum_{   \substack{j,k\in A \\j<k}}\sum_{a=0}^{\infty}\mathbb{P}[N(k^d-j^d)=a] \leq C(A)|A|^2. 
			\end{aligned}
		\end{equation*}	
		So our aim is to obtain such an estimate.

		We start with a crude estimate. For $a\in \N$ we have by Lemma \ref{l3}
		\begin{equation*}
			\begin{aligned}
				\sum_{ \substack{j,k\in A \\j<k}}	\mathbb{P}[N(k^d-j^d)=a] \leq|A|^2\sup_{t\geq 0}\Pp[N(t)=a] 
				\leq 	\frac{|A|^2}{\sqrt{2\pi a}}.
			\end{aligned}
		\end{equation*}
		So if $a\geq |A|^4$ the inner sum is bounded by an absolute constant.   
		This is also true for $a=0$ as we have calculated in \eqref{pp4}.    Thus we may assume $1\leq a <|A|^4.$

	 Let us define the function $f(x):=4 \sqrt{x\log x}$ for $x\geq 1$. 	We decompose the inner sum to two sums over sets $A_1,A_2$
		\begin{equation*}
			\begin{aligned}
				A_1:=\{(j,k)\in A^2: j<k, \  |a-(k^d-j^d)|\geq 2f(a) \},  \quad  A_2:=\{(j,k)\in A^2: j<k \}\setminus A_1.
			\end{aligned}
		\end{equation*}
		We note that if $a<e^{90}$ then $A_2$ is empty, and therefore we may assume that $a$ is large when summing over that set.

		We first consider the sum over $A_1,$ which is another auxiliary sum. Actually it is a tail estimate coming from the Poisson process taking every positive value with some probability, albeit small.  By Lemma \ref{lem2} we have $|a-(k^d-j^d)|\geq f(k^d-j^d)$ and by Lemma \ref{lem1}
		\begin{equation*}
			\begin{aligned}
				\mathbb{P}[N(k^d-j^d)=a] 
				\leq \mathbb{P}[|N(k^d-j^d)-(k^d-j^d)|\geq f(k^d-j^d)] 
				\leq 2(k^d-j^d)^{-4}.
			\end{aligned}
		\end{equation*}
		Therefore 
		\begin{equation*}
			\begin{aligned}
				\sum_{A_1}\mathbb{P}[N(k^d-j^d)=a]\leq 2\sum_{\substack{j,k\in A \\ j<k}} (k^d-j^d)^{-4}  \leq 2\sum_{j\in A} \sum_{k>j} k^{-4}\leq 1.
			\end{aligned}
		\end{equation*}

		We now sum over  $A_2.$ This is the main sum that will be estimated  by  counting   lattice points.
		By  Lemma \ref{l3} we have 
		\begin{equation*}
			\begin{aligned}
				\sum_{ A_2}\mathbb{P}[N(j^d-k^d)=a]  \leq  a^{-1/2}|A_2|.
			\end{aligned}
		\end{equation*}
		So our aim is to estimate the cardinality of $A_2$ which is a lattice point problem. We can rewrite 
		\begin{equation*}
			\begin{aligned}
				A_2= \{(j,k): j<k, \  (j^d+a-2f(a)))^{\frac{1}{d}} < k < (j^d+a+2f(a))^{\frac{1}{d}} \}.
			\end{aligned}
		\end{equation*}
		We observe by the mean value theorem that 
		\begin{equation*}
			\begin{aligned}
				(j^d+a+2f(a))^{\frac{1}{d}}-j\leq [a+2f(a)]\frac{j^{1-d}}{d}\leq aj^{1-d}. 
			\end{aligned}
		\end{equation*}
		When $j>a^{1/(d-1)}$ this final expression is less than 1. Therefore for these $j$ we have no pair $(j,k)$ in $A_2$. Similarly 
		\begin{equation*}
			\begin{aligned}
				(j^d+a+2f(a))^{\frac{1}{d}}- (j^d+a-2f(a))^{\frac{1}{d}} \leq  4f(a) \frac{(j^d+a-2f(a))^{\frac{1}{d}-1}}{d} &\leq 4f(a)a^{\frac{1}{d}-1 }\\  &\leq 16a^{\frac{1}{d}-\frac{1}{2}}\log a.
			\end{aligned}
		\end{equation*}
		For $d\geq 3$ this is less than one, so for each $j\leq a^{1/(d-1)}$, there can be at most 1 solution, which means $|A_2|\leq a^{1/2}.$

		For $d=2$ this method of counting solutions for each fixed $j$ is too crude, and instead we will count them for each fixed value of $b=k-j$. Since when $b$ is fixed knowing $j$ immediately gives $k$, all we need to do is to  count $j$.
		We have $k^2-j^2=b^2+2jb$, therefore elements of $A_2$ satisfy
		
		\begin{equation*}
			\begin{aligned}
				|b^2+2jb-a| < 2f(a)
				\implies  \frac{a}{2b}-\frac{b}{2}- \frac{f(a)}{b}&< j < \frac{a}{2b}-\frac{b}{2}+ \frac{f(a)}{b}.
			\end{aligned}
		\end{equation*}
		So possible number of solutions $j$ for fixed $b$ is bounded by $1+2f(a)/b$, 
		and as $j\geq 1$, we must have 
		\[b^2+2jb < a+2f(a)\implies   b^2<2a\implies b< \sqrt{2a}.\]
		Thus summing the number of solutions over $b$
		
		\begin{equation*}
			\begin{aligned}
				|A_2|\leq\sum_{b=1}^{\lfloor \sqrt{2a} \rfloor} 1+\frac{ 2f(a)}{b}\leq  \sqrt{2a}+2f(a)\log a<10\sqrt{a}\log^{3/2} a.
			\end{aligned}
		\end{equation*}
		 By our assumption $a\leq |A|^4$ the sum over $A_2$ is bounded by $80\log^{3/2} |A|$. Also taking into account the contribution of $A_1$, we see that independent of $a$, the inner sum is bounded by $1+80\log^{3/2}|A|$.
			This concludes the sum over $A_{321}.$ 
		
		We finally consider the sum over $A_{322}$. We  partition this set into four:  
		\begin{equation}\label{par2}
			\begin{aligned}
				A_{3221}&:=\{ (j_1,j_2,k_1,k_2)\in A_{322}: [k_1,j_1] \subseteq (j_2,k_2)  \}\\
				A_{3222}&:=\{(j_1,j_2,k_1,k_2)\in A_{322}: [j_2,k_2] \subseteq (k_1,j_1)  \}\\
				A_{3223}&:=\{(j_1,j_2,k_1,k_2)\in A_{322}: k_1<j_2<j_1<k_2  \}\\
				A_{3224}&:=\{(j_1,j_2,k_1,k_2)\in A_{322}: j_2<k_1< k_2<j_1  \}.    
			\end{aligned}
		\end{equation}
		First two and the last two are handled in the same way so we will only consider $ A_{3221}, A_{3223}.$ On  $ A_{3221}$ the condition $ [k_1,j_1] \subseteq (j_2,k_2)$ means
		\begin{equation*}
			\begin{aligned}
				\sum_{  A_{3221}}\mathbb{P}[N(j_1^d)+N(j_2^d)=N(k_1^d)+N(k_2^d)]&=\sum_{  A_{3221}}\mathbb{P}[N(j_2^d)-N(k_1^d)=N(k_2^d)-N(j_1^d)]. 
			\end{aligned}
		\end{equation*}
		Since $j_2<k_1,$ but $k_2>j_1$, and since $(j_2,k_1)\cap(j_1,k_2)=\emptyset$ the last sum is  
		\begin{equation*}
			\begin{aligned}
				=&\sum_{  A_{3221}}\mathbb{P}[N(j_2^d)-N(k_1^d)=0=N(k_2^d)-N(j_1^d)]\\	=	&\sum_{  A_{3221}}\mathbb{P}[N(k_1^d)-N(j_2^d)=0]\mathbb{P}[N(k_2^d)-N(j_1^d)=0]\\ 
				\leq  &\Big[ \sum_{  k>j}\mathbb{P}[N(k^d)-N(j^d)=0]  \Big]^2. 
			\end{aligned}
		\end{equation*}
		We  estimated the sum inside the square by $2$. So this case is bounded by just $4$.
		
		On $ A_{3223}$ the condition $ k_1<j_2<j_1<k_2 $ means $(k_1,j_2)\cap(j_1,k_2)=\emptyset$, and again we reduce to the case $A_{321}.$ This finishes $A_{32}$. The set $A_{33}$ is entirely symmetric to $A_{32}$, and is handled in exactly the same way. Hence  we obtain \eqref{pp1} for $p=4$. By the Hölder inequality we obtain \eqref{pp1}. Then arguments similar to those in the previous proofs yield analogues of \eqref{s2},\eqref{s3}.

	\end{proof}
	
	In this proof we  made every decomposition explicitly. Of course this will not be possible for the general case in Theorem 7, as it would result in too many cases to deal with. We will introduce new ideas to handle this situation. Also lattice point counting problem is harder in that case, and will be handled with Theorem 8.


	\section{Arithmetic progressions of larger step size}
	
	This section pertains to Theorem 5. In this theorem
	our aim is to see  how randomization affects an exponential sum with frequencies forming an arithmetic progression when the step size is large. Our Theorem 2 shows that it does not have much of an effect when the step size is $1$. But as the step size grows we will observe  the decay  induced. This  decay also means the destruction of arithmetic progression structure. It is also  instructive to compare this theorem to Theorem 4. Picking $r=d-1$ we obtain two sequences $\{j^d\},  \{jM^{r}\}, \ 1\leq j\leq M$ that grow roughly at the same pace, but for the first one growth starts slowly and then gains pace, while for the second it is uniform. Comparing the two theorems illuminates the effects that these differences between sequences  have.

	\begin{proof}
	
		We first look at the 
		$p=2$ case. 
		\begin{equation*}
			\begin{aligned}
				\mathbb{E}\Big\|\sum_{j=1}^M e^{2\pi i yN(jM^r)}\Big\|_{2}^2 = \sum_{j,k=1 }^Me^{-|j-k|M^r}= M+ 2\sum_{k=2}^Me^{-kM^{r}} \sum_{j< k }e^{jM^{r}}&\leq M+4(M-1)e^{-M^r} \\ &\leq  M+2\min\{ M, C_r\},
			\end{aligned}
		\end{equation*}
	 where $C_r$ is a constant that depends only on $r.$
		Armed with these we move on to the case $p=4$. We first show the right hand side.
		\begin{equation*}
			\begin{aligned}
				\mathbb{E}\Big\|\sum_{j=1}^M  e^{2\pi i yN(jM^r)}\Big\|_{4}^4 = \sum_{j_1,j_2,k_1,k_2=1}^M \mathbb{P}[N(j_1M^r)+N(j_2M^r)=N(k_1M^r)+N(k_2M^r)].
			\end{aligned}
		\end{equation*}
		Let $A:=\{1,2,3\ldots, M\}$, and  we decompose the set $A^4$ to which any quadruple $(j_1,j_2,k_1,k_2)$ belongs  as in the proof of Theorem 4,
		\[A^4=A_{11}\cup A_{12}\cup A_{21}\cup A_{22}\cup A_3, \]
		and all except $A_3$ are dealt with easily as before. For example on $A_{11}$
		\begin{equation*}
			\begin{aligned}
				\sum_{  A_{11}}\mathbb{P}[N(j_2M^r)=N(k_2M^r)]=M\sum_{  j,k=1}^M\mathbb{P}[N(|j-k|M^r)=0] = M\sum_{j,k=1 }^Me^{-|j-k|M^r},
			\end{aligned}
		\end{equation*}
		which as above gives a $\approx M^2$ term. Each of the $A_{ij},  \ i,j=1,2$ gives the same contribution. We thus proceed to $A_3$, and decompose
		\[A_3=A_{31}\cup A_{32}\cup A_{33} \cup A_{34}   \]
		as in the proof of Theorem 4, and 
		$A_{31},A_{34}$ can be dispatched  using exactly the same ideas. For example the sum on $A_{31}$ is bounded by 
		\begin{equation*}
			\begin{aligned}
			M^2	\sum_{  k>j}\mathbb{P}[N(kM^r)-N(jM^r)=0]\leq C_rM^2,
			\end{aligned}
		\end{equation*} 
		with $C_r$ being as above.  The set $A_{34}$  give the same contribution.

		We now move on to  $A_{32}.$ 
		We decompose $A_{32}=A_{321}\cup A_{322}$, the first of which   contains the quadruples for which the intervals  $(k_1,j_1),(j_2,k_2)$  are disjoint, and  the second  contains those for which they intersect. Mimicking the steps in section 6, the sum over $A_{321}$  is bounded by 
		\begin{equation*}
			\begin{aligned}
			\sum_{a=0}^{\infty}\Big[\sum_{  {k>j}}\mathbb{P}[N((k-j)M^r)=a]\Big]^2 =	\sum_{a\leq  M^{2r}}+\sum_{a>  M^{2r}}\Big[\sum_{  {k>j}}\mathbb{P}[N((k-j)M^r)=a]\Big]^2={\bf I + II.}
			\end{aligned}
		\end{equation*}	
	This decomposition and the methods we employ to estimate each sum  differ significantly from our previous methods.
Let $b=k-j$, and $a\in \N$. Then 
		\begin{equation*}
			\begin{aligned}
				\sum_{  {k>j}}\mathbb{P}[N((k-j)M^r)=a]=\sum_{b=1}^{M-1} (M-b)\mathbb{P}[N(bM^r)=a]&=\frac{M^{ar}}{a!}\sum_{b=1}^{M-1} (M-b)b^ae^{-bM^r}.
			\end{aligned}
		\end{equation*}
		As $b^ae^{-bM^r}$  for $b\geq 0$ is a function that increases up to a supremum and then decreases, this can be estimated by 
		\begin{equation*}
			\begin{aligned}
				\leq \frac{M^{1+ar}}{a!}\sum_{b=1}^{M-1} b^ae^{-bM^r} \leq \frac{M^{1+ar}}{a!}2\sup_{b\geq 0} b^ae^{-bM^r}+ \frac{M^{1+ar}}{a!}\int_{0}^{\infty} b^ae^{-bM^r}db. 
			\end{aligned}
		\end{equation*}
		The supremum is attained when $b=aM^{-r}$, while the integral is clearly the Gamma function after a change of variables. Plugging these in, and using \eqref{robs} gives
		\begin{equation*}
			\begin{aligned}
				=   2\frac{M a^a}{a!e^a} +
				 \frac{M^{1+ar}}{a!} \frac{\Gamma(a+1)}{M^{r(a+1)}} \leq \frac{M}{\sqrt{ a}}+M^{1-r}. 
			\end{aligned}
		\end{equation*}
		In the summation range of ${\bf I}$ this last sum is bounded by $2M/\sqrt{a}$, and thus
		\begin{equation*}
			\begin{aligned}
			{\bf I }\leq \Big[\sum_{  {k>j}}\mathbb{P}[N((k-j)M^r)=0]\Big]^2  +\sum_{1\leq a\leq  M^{2r}}\frac{4M^2}{a}\leq C_r^2+4M^2(1+2r\log M).
			\end{aligned}
		\end{equation*}	
			In the summation range of ${\bf II}$  the term  $M^{1-r}$ dominates, hence
		\begin{equation*}
			\begin{aligned}
				{\bf II }\leq  2M^{1-r} \sum_{ a>  M^{2r}}\sum_{  {k>j}}\mathbb{P}[N((k-j)M^r)=a]= 2M^{1-r} \sum_{  k>j}\sum_{ a>  M^{2r}}\mathbb{P}[N((k-j)M^r)=a]\leq M^{3-r}.
			\end{aligned}
		\end{equation*}	
	 This  finishes the estimate on $A_{321}.$

		As in the previous proof, this case of nonintersection is the generic case, and intersection cases contained in $A_{322}$ reduce to the previous cases.
		The set $A_{33}$ is similar to  $A_{32}$. We thus obtain the upper bound $\lesssim_r M^2\log M+M^{3-r}$.  
		
		For  the lower bound, we  use  the  observation that on the torus $L^4$ norm dominates $L^2$ norm	to obtain the $M^2$ term, and the argument   that in  a small neighborhood of the origin  there is little cancellation between the terms of the exponential sum to obtain the   $M^{3-r}$ term.
		
	\end{proof}
	

	\section{Perturbations of  Green-Ruzsa Sets}
	
	In this section we will prove that if we  perturb the Green-Ruzsa sets, they will satisfy the Hardy-Littlewood majorant property almost surely.  For an integer $D\geq 5$ the Green-Ruzsa set $\Lambda_{D,k}$ is defined by 
	\begin{equation*}
		\Lambda_{D,k}:= \Big\{\sum_{j=0}^{k-1}d_j D^j |  \  k\in \N,  \   d_j\in \{0,1,3\}  \Big\}.
	\end{equation*}
	Thus this set consists of nonnegative integers that are when written in base $D$ have digits only $0,1,3$. 
	Therefore this set is sparse, and we can quantify this sparsity by the  estimate below. This property is the only property of this set that will be used.
	\begin{proposition}
		The Green-Ruzsa set $\Lambda_{D,k}$ satisfies the sparsity condition
		\begin{equation*}
			\big|\Lambda_{D,k} \cap [n-M,n+M]\big|\leq 24M^{\frac{\log 3}{\log D}}
		\end{equation*}
		for any $M\in \N,  n\in \Z$. 
	\end{proposition}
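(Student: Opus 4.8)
The plan is to use nothing but the base-$D$ digit structure of $\Lambda_{D,k}$, specifically a gap estimate: two distinct numbers whose base-$D$ digits all lie in $\{0,1,3\}$ cannot be close to one another unless they agree in all sufficiently high digit positions.

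First I would record the elementary gap bound. Write $x=\sum_i a_iD^i$ and $y=\sum_i b_iD^i$ with $a_i,b_i\in\{0,1,3\}$, suppose $x\neq y$, and let $j$ be the largest index with $a_j\neq b_j$. Then, assuming without loss of generality $a_j>b_j$,
\[
|x-y|\ \geq\ D^j-3\sum_{i=0}^{j-1}D^i\ =\ \frac{D^j(D-4)+3}{D-1}\ >\ \frac{D-4}{D-1}\,D^j\ \geq\ \frac14\,D^j,
\]
the last inequality being precisely where the hypothesis $D\geq 5$ enters; this is really the only step that is not bookkeeping.

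Next, fix $n\in\Z$ and $M\in\N$, and let $\ell$ be the least integer $\geq 1$ with $D^\ell>8M$, which exists since $D^0=1<8\leq 8M$. Because the interval $I=[n-M,n+M]$ has diameter $2M<\tfrac14 D^\ell$, the gap bound forbids any two elements of $\Lambda_{D,k}\cap I$ from differing at a digit position $\geq\ell$. Hence all elements of $\Lambda_{D,k}\cap I$ share the same digits at positions $\ell,\ell+1,\dots$, so the map sending $x\in\Lambda_{D,k}\cap I$ to its tuple of digits in positions $0,\dots,\ell-1$ is injective; as each of these $\ell$ positions takes one of three values, $\big|\Lambda_{D,k}\cap I\big|\leq 3^\ell$. (If $\ell>k$ this is still fine: then $\Lambda_{D,k}$ has only $3^k\leq 3^\ell$ elements in total.)

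Finally I would convert $3^\ell$ into the claimed bound. Minimality of $\ell$ gives $D^{\ell-1}\leq 8M$, so $\ell\leq 1+\log_D(8M)$ and hence
\[
3^\ell\ \leq\ 3\cdot(8M)^{\log 3/\log D}\ =\ 3\cdot 8^{\log 3/\log D}\,M^{\log 3/\log D}\ \leq\ 24\,M^{\frac{\log 3}{\log D}},
\]
where the last step uses $\log 3/\log D\leq 1$ (again from $D\geq 5$) to bound $8^{\log 3/\log D}\leq 8$. There is no serious obstacle here: the only genuine idea is the digit-gap estimate, and the only point requiring care is calibrating the threshold $8M$ so that the final constant comes out at exactly $24$ rather than something larger; everything else is routine.
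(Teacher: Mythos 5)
Your proof is correct and takes essentially the same route as the paper: both rely on a base-$D$ digit-gap estimate with constant $\tfrac14$ (using $D\geq5$) to conclude that elements of $\Lambda_{D,k}$ lying in the interval can differ only in the lowest $\ell\leq 1+\log_D(8M)$ digit positions, and then count $3^{\ell}$. The paper phrases this via the extreme elements $a,b$ of $\Lambda_{D,k}\cap[n-M,n+M]$ and the signed digit expansion of $b-a$, whereas you work directly with pairwise differences and a threshold $\ell$ fixed in advance by $M$; the computation and the final constant $24$ are identical.
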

	
	\begin{proof}
		We may assume $n\geq 0$.  If the set in question has less than 2 elements, we are done. Therefore we may assume that it has at least two elements, and we let $a$ be its least element and $b$ its largest element. 
		Then we can write for some $m\leq k$
		\[b-a=\sum_{j=0}^{m-1}c_jD^j \quad -3\leq c_j\leq 3\] 	
		with $c_{m-1}>0$ as $b-a>0.$ But observe that 
		\[  2M\geq  b-a \geq D^{m-1}-  3\sum_{j=0}^{m-2}D^j =D^{m-1}-3\frac{D^{m-1}-1}{D-1}\geq D^{m-1}-\frac{3}{4}D^{m-1}=\frac{D^{m-1}}{4}.\]
		From this we deduce
		\[  m\leq 1+\frac{\log 8M}{\log D}.\]
		By changing the  first $m$ digits of $a$   in base $D$ we might obtain	 elements of $\Lambda_{D,k}$ not exceeding $b$, but we cannot change the higher digits even if they exist, for numbers thus obtained will certainly lie outside $[a,b]$. Therefore there are at most $3^m$ elements of $\Lambda_{D,k}$ in $[a,b]$, which means    
		\[	\big|\Lambda_{D,k} \cap [n-M,n+M]\big| =\big|\Lambda_{D,k} \cap [a,b]\big| \leq 3^{1+\frac{\log 8M}{\log D}}=3(8M)^{\frac{\log 3}{\log D}}\leq 24 M^{\frac{\log 3}{\log D}}.   \]

	\end{proof}

	We  now prove Theorem 6.

	\begin{proof}

		We start with  \eqref{gr1} for $p=2.$
		\begin{equation*}
			\begin{aligned}
				\mathbb{E}\Big\|\sum_{j\in  A} e^{2\pi i yN(j)}\Big\|_{2}^2  = \sum_{j,k\in A}\mathbb{P}[N(|j-k|)=0]
				= \sum_{j,k\in A}e^{-|j-k|}.
			\end{aligned}
		\end{equation*}
		We estimate this last term as
		
		\begin{equation*}
			\begin{aligned}
				|A|\leq \sum_{j,k\in A}e^{-|j-k|}= |A|+ 2\sum_{k\in A}\sum_{\substack {j\in A\\ j<k}}e^{j-k}&\leq |A|+ 2\sum_{k\in A}\sum_{j=0}^{k-1}e^{j-k} \leq  3|A|.
			\end{aligned}
		\end{equation*}
		Here we note that if for example $A=\Lambda_{D,k} \cap [0,M]$, then any element $a$ of $A$ with $a\equiv 1 \pmod{D}$ there exist $b\in A$ with $b=a-1$. Also if  $a\equiv 3 \pmod{D}$, there exist  $b\in A$ with $b=a-2$. Therefore for large $|A|$ 
		\begin{equation*}
			\begin{aligned}
				\sum_{k\in A}\sum_{\substack {j\in A\\ j<k}}e^{j-k}\geq  \frac{|A|-2}{3}\big[e^{-1}+e^{-2}   \big].
			\end{aligned}
		\end{equation*}
		Thus for such sets the contribution of the nondiagonal term is comparable to that of  the diagonal term, as opposed to the situation we encountered in estimates over powers. Now all claims of the theorem for $p=2$ follow.

		We now move on to \eqref{gr1} for $p=4$. From this \eqref{gr1} follows for all $2\leq p\leq 4$ immediately.  The left hand side of \eqref{gr1} follows immediately from the $p=2$ case, and the Hölder inequality. So we  concentrate on the right hand side. We may assume that $A$ contains no element in $[0, e^{100}]$, have more elements than $e^{100}$, and whenever $j,k\in A, \ k>j$ we have $k-j\geq e^{100}$. For if we have our result under these assumptions, then for a  generic set $A\subset \Z_+$ we can do the following decomposition 
		\begin{equation*}
			A_{-1}:=A\cap [0,e^{100}],   \qquad  	A_n=(A\setminus A_{-1})\cap(\lceil e^{100} \rceil \Z +n), \qquad   0\leq n <\lceil e^{100} \rceil.
		\end{equation*}
		Some $A_n,  \ 0\leq n <  \lceil e^{100} \rceil  $ may have more elements than $e^{100}$, let $N_1$ be the set of these $n$, and let $N_2$ be the set of the other $n$, including $n=-1$. Then 
		\begin{equation*}
			\begin{aligned}
				\mathbb{E}\Big\|\sum_{j\in  A} e^{2\pi i yN(j)}\Big\|_{4}^4  &\leq e^{303}\Big[\sum_{n\in N_1} \mathbb{E}\Big\|\sum_{j\in  A_n} e^{2\pi i yN(j)}\Big\|_{4}^4  + \sum_{n\in N_2} \mathbb{E}\Big\|\sum_{j\in  A_n} e^{2\pi i yN(j)}\Big\|_{4}^4    \Big].
			\end{aligned}
		\end{equation*}
		Applying our result under the assumptions above to $A_n ,\ n\in N_1$, and estimating sums over $A_n ,\ n\in N_2$ trivially 	
		\begin{equation*}
			\begin{aligned}
				\leq e^{303}\Big[C\sum_{n\in N_1}|A_n|^2  + \sum_{n\in N_2} |A_n|^4  \Big] 	
				\leq e^{303}\Big[C\sum_{n\in N_1}|A_n|^2  + e^{202}\sum_{n\in N_2} |A_n|^2 \Big] \leq e^{303}[C+e^{202}]|A|^2.
			\end{aligned}
		\end{equation*}
		So proving our result under these assumptions suffices.

		We start with
		\begin{equation*}
			\begin{aligned}
				\mathbb{E}\Big\|\sum_{j\in  A} e^{2\pi i yN(j)}\Big\|_{4}^4  = \sum_{j_1,j_2,k_1,k_2\in A}\mathbb{P}[N(j_1)+N(j_2)=N(k_1)+N(k_2)].
			\end{aligned}
		\end{equation*}
		As before, the set $A^4$ can be written as 
		$A^4=A_{11}\cup A_{12}\cup A_{21}\cup A_{22}\cup A_3, $ 
		and for any $A_{ij},i,j=1,2$ 
		\begin{equation*}
			\begin{aligned}
				\sum_{  A_{ij}}\mathbb{P}[N(j_1^d)+N(j_2^d)=N(k_1^d)+N(k_2^d)]=|A|\sum_{ j,k\in A}\mathbb{P}[N(j^d)=N(k^d)].
			\end{aligned}
		\end{equation*}
		This last sum has been estimated above. So from these sets we have $\lesssim |A|^2$ contribution.   
		
		We  now decompose
		$A_3=A_{31}\cup A_{32}\cup A_{33} \cup A_{34} $ as before.
		Because we lack the good bounds on the nondiagonal sum in the $p=2$ case, unlike the powers case, estimating the  sets $A_{31},A_{34}$ is now more difficult.  Without loss of generality we can just concentrate on $A_{31}.$
		\begin{equation*}
			\begin{aligned}
				\sum_{  A_{31}}\mathbb{P}[N(j_1)+N(j_2)=N(k_1)+N(k_2)]&=\sum_{  A_{31}}\mathbb{P}[N(j_1)-N(k_1)=0=N(k_2)-N(j_2)]. 
			\end{aligned}
		\end{equation*}
	At this stage if we just ignore one of these equations and crudely estimate by 
		\begin{equation*}
			\begin{aligned}
				\leq  |A|^2\sum_{  \substack{j,k\in A \\ j<k}}\mathbb{P}[N(k)-N(j)=0], 
			\end{aligned}
		\end{equation*}
		we may get a contribution $\approx |A|^3$. Therefore we cannot follow this approach, and must employ more  delicate analysis similar in vein to the one we deploy for the sets $A_{32},A_{33}.$  We partition  
		$	A_{31}$ into two  sets $A_{311},A_{312}$ where the first contains the quadruples  for which $ (k_1,j_1),(k_2,j_2)$ are disjoint, and the second those for which they intersect.
		For $A_{311}$ we have

		\begin{equation*}
			\begin{aligned}
				\sum_{  A_{311}}\mathbb{P}[N(j_1)-N(k_1)=0=N(j_2)-N(k_2)]
				=& \sum_{  A_{311}}\mathbb{P}[N(j_1)-N(k_1)=0]\Pp[ N(j_2)-N(k_2)=0]
				\\ \leq &\Big[\sum_{ \substack{j,k\in A \\ j<k}}\mathbb{P}[N(k)-N(j)=0] \Big]^2 .  
			\end{aligned}
		\end{equation*}
		This last sum, as we estimated above is  $\lesssim |A|^2$. In $A_{312}$ we have two possibilities, either
		$j_2-k_2\geq j_1-k_1$ or $j_2-k_2< j_1-k_1$. Without loss of generality we may assume the first. Then 
		\begin{equation*}
			\begin{aligned}
				\sum_{ \substack{ A_{312}\\ j_2-k_2\geq j_1-k_1  } }\mathbb{P}[N(j_1)-N(k_1)=0=N(j_2)-N(k_2)]
				 \leq \sum_{   \substack{ A_{312}\\ j_2-k_2\geq j_1-k_1  }}\Pp[ N(j_2)-N(k_2)=0].  
			\end{aligned}
		\end{equation*}	
		The conditions that $(k_1,j_1)$ must intersect $(k_2,j_2)$, and $j_2-k_2\geq j_1-k_1$ forces $j_1,k_1$ to be within an interval of length $3(j_2-k_2)-2$. Considering the sparsity condition on $A$, there are 
		$		C_A [ (3(j_2-k_2)/2]^{\alpha}$
	integers to choose from.  Thus the last sum above is bounded by 
		\begin{equation*}
			\begin{aligned}
				\leq 2C^2_A\sum_{\substack{j,k \in A \\  k>j}} \Pp[ N(k-j)=0](k-j)^{2\alpha}\leq   2C^2_A\sum_{\substack{j,k \in A \\  k>j}} e^{j-k}(k-j)^{2\alpha} \leq  2C^2_A |A|^2\sup_{x\geq 0} xe^{-x}  \leq  2C^2_A|A|^2. 
			\end{aligned}
		\end{equation*}
		Thus contribution from $A_{31},A_{34}$ is $\lesssim_{C_A} |A|^2.$

		We now move on to  $A_{32}.$ This, together with its symmetric counterpart $A_{33}$, which is handled in exactly the same way, represent the most important, generic cases.  We decompose $A_{32}=A_{321}\cup A_{322}$ as before. 
	We can write 	the sum over $A_{321}$  as before
		\begin{equation*}
			\begin{aligned}
				\sum_{  A_{321}} \mathbb{P}[N(j_1)+N(j_2)=N(k_1)+N(k_2)] 
				\leq 
				\sum_{a=0}^{\infty}\Big[\sum_{ \substack{j,k\in A \\j<k}}\mathbb{P}[N(k-j)=a]\Big]^2, 
			\end{aligned}
		\end{equation*}
		and use the ideas introduced above, but this  is not good enough. For the extra variable $a$ introduced to relate $j_1,j_2,k_1,
		k_2$ to each other leads to inefficiencies. Instead we will relate these variables to each other directly with an inequality.  We further decompose $A_{321}$ into the set of quadruples $A_{3211}$ with $k_2-j_2\geq j_1-k_1$, which we consider without loss of generality, and the remaining ones comprising $A_{3212}$ . We define $f(x):=4\sqrt{x\log x}$ for $x\geq 1.$
		To each quadruple $(j_1,j_2,k_1,k_2)$ we consider,  we assign three events
		\begin{equation*} 
			\begin{aligned}
				& \Omega_*:=\big\{\omega\in \Omega: N(j_1)(\omega)-N(k_1)(\omega)=N(k_2)(\omega)-N(j_2)(\omega)  \big\},\\
				&\Omega_1:=\big\{\omega\in \Omega: |N(j_1)(\omega)-N(k_1)(\omega)-(j_1-k_1)|< f(k_2-j_2)  \big\}, \\
				&\Omega_2:=\big\{\omega\in \Omega: |N(k_2)(\omega)-N(j_2)(\omega)-(k_2-j_2)|< f(k_2-j_2)  \big\}. 
			\end{aligned}
		\end{equation*}
		Using these sets we can write
		\begin{equation*}
			\begin{aligned}
				\Omega_*\subseteq [\Omega_*\cap\Omega_1 \cap \Omega_2] \cup \Omega_1^c \cup \Omega_2^c,
			\end{aligned}
		\end{equation*}
		and thus also
		\begin{equation*}
			\begin{aligned}
				\Pp[\Omega_*]\leq \Pp[\Omega_*\cap\Omega_1 \cap \Omega_2] + \Pp[\Omega_1^c] +\Pp[\Omega_2^c].
			\end{aligned}
		\end{equation*}
		We  have by Lemma \ref{lem1}
		\begin{equation*}
			\begin{aligned}
				\Pp[\Omega_1^c], \Pp[\Omega_2^c]\leq  2e^{-4\log (k_2-j_2)}=2(k_2-j_2)^{-4}, 
			\end{aligned}
		\end{equation*}
		that is, the contribution of these terms are minor. For the main term  $\Omega_*\cap\Omega_1 \cap \Omega_2$ we first observe that if this set  contains even one   $\omega$, we have by triangle inequality
		\begin{equation*}\label{gr4} 
			\begin{aligned}
				&|(k_2-j_2)-(j_1-k_1)|\\ =&\big|(k_2-j_2)-(j_1-k_1)+N(j_1)(\omega)-N(k_1)(\omega)-[N(k_2)(\omega)-N(j_2)(\omega)] \big| \\  \leq &|N(j_1)(\omega)-N(k_1)(\omega)-(j_1-k_1)|+ |N(k_2)(\omega)-N(j_2)(\omega)-(k_2-j_2)|\\   < &2f(k_2-j_2). 
			\end{aligned}
		\end{equation*}
		So the set $\Omega_*\cap\Omega_1 \cap \Omega_2$ is empty  for quadruples violating this relation. On the other hand, the probability of $\Omega_*$ can be estimated as
		\begin{equation*} 
			\begin{aligned}
				\Pp[\Omega_*]=\sum_{a=0}^{\infty} \Pp[ N(j_1)-N(k_1)=a=N(k_2)-N(j_2) ]  & =\sum_{a=0}^{\infty} \Pp[ N(j_1-k_1)=a]\Pp[N(k_2-j_2)=a ]\\ &	\leq \frac{1}{\sqrt{k_2-j_2}}\sum_{a=0}^{\infty} \Pp[ N(j_1-k_1)=a]\\ &	=\frac{1}{\sqrt{k_2-j_2}}.
			\end{aligned}
		\end{equation*}
		We then can bound
		\begin{equation*}
			\begin{aligned}
				\sum_{A_{3211}} \mathbb{P}[N(j_1)+N(j_2)=N(k_1)+N(k_2)] =
				\sum_{  A_{3211}} \mathbb{P}[N(j_1)-N(k_1)=N(k_2)-N(j_2)] 
			\end{aligned}
		\end{equation*}
	by the following			
			\begin{equation*}
				\begin{aligned}
				 	&\sum_{  A_{3211}} \Pp[\Omega_*\cap\Omega_1 \cap \Omega_2] + \Pp[\Omega_1^c] +\Pp[\Omega_2^c]
				\leq \sum_{  \substack{A_{3211}\\  |(k_2-j_2)-(j_1-k_1)|< 2f(k_2-j_2)  }}  \frac{1}{\sqrt{k_2-j_2}}+ \sum_{  A_{3211}}   4(k_2-j_2)^{-4}.  
			\end{aligned}
		\end{equation*}
		The second sum is easy to handle:
		\begin{equation*}
			\begin{aligned}
				\sum_{  A_{3211}}   4(k_2-j_2)^{-4}\leq  \sum_{\substack{j_2,k_2 \in A \\ j_2  <k_2 }}\sum_{\substack{j_1,k_1 \in A \\ k_1<j_1}}   4(k_2-j_2)^{-2}(j_1-k_1)^{-2} \leq  4\Big[\sum_{\substack{j,k \in A \\ j<k}}   (k-j)^{-2}\Big]^2.  
			\end{aligned}
		\end{equation*}
		But we have
		\begin{equation*}
			\begin{aligned}
				\sum_{\substack{j,k \in A \\ k>j}}     (k-j)^{-2} \leq \sum _{k\in A}\sum_{1<j<k}   (k-j)^{-2} < \sum _{k\in A}\frac{\pi ^2}{6}<2|A|.  
			\end{aligned}
		\end{equation*}
		So the contribution of the second sum is bounded by $16|A|^2.$ 
		
		We now look at the first sum. It is bounded by 
		\begin{equation*}
			\begin{aligned}
				\sum_{\substack{j,k \in A \\ j_2<k_2}} \frac{1}{\sqrt{k_2-j_2}}|\{(j_1,k_1)\in A^2:  |(k_2-j_2)-(j_1-k_1)|< 2f(k_2-j_2)   \}|.
			\end{aligned}
		\end{equation*}
		So we need to bound the cardinality of the set within the summation. The condition $|(k_2-j_2)-(j_1-k_1)|< 2f(k_2-j_2)$ implies that for a fixed $k_1 \in A$  the element $j_1$ lies  in an interval of radius $2f(k_2-j_2)$.  By the density condition on our set $A$, there are at most $C_A(2f(k_2-j_2))^{\alpha}$ elements $j_1$ for each $k_1 \in A$. Therefore 
		\begin{equation*}
			\begin{aligned}
				\leq 2C_A	|A|	\sum_{\substack{j,k \in A \\ j<k}} \sqrt{\log(k-j)} (k-j)^{\frac{\alpha-1}{2}}\leq 	2C_A	|A|	\sum_{n=0}^{\infty} \sum_{\substack{j,k \in A \\ 2^n\leq k-j<2^{n+1}}} ((n+1)\log 2)^{\frac{1}{2}} 2^{n\frac{\alpha-1}{2}}.
			\end{aligned}
		\end{equation*}
		To proceed we need the cardinality of $\{(j,k)\in A^2: 2^n\leq k-j<2^{n+1}\}$. For a pair $(j,k)$ to be in this set, for a fixed $j$ we must have $k$ within the interval $[j+2^{n},j+2^{n+1})$. By the density hypothesis on $A$, this means for a fixed $j$ we can have at most $C_A2^{n\alpha}$ values of $k.$
		Thus
		\begin{equation*}
			\begin{aligned}
				\leq 	2C_A^2	|A|^2	\sum_{n=0}^{\infty}  (n+1)^{\frac{1}{2}} 2^{n\frac{3\alpha-1}{2}}.
			\end{aligned}
		\end{equation*}
		Plainly if $\alpha<1/3$ the sum converges to a constant $C_{\alpha}.$ This concludes the estimation of the sum $A_{321}$ with a constant that depends only on $\alpha,C_A.$

		We finally consider the sum over $A_{322}$. We  partition this set into $A_{322n}, \ 1\leq n\leq 4$ as in   
	\eqref{par2}.
	The first two and the last two are handled in the same way so we will only consider $ A_{3221}, A_{3223}.$ 
	For  $ A_{3221}$  the same arguments as in Chapter 6 yield
	
		\begin{equation*}
			\begin{aligned}
				\sum_{  A_{3221}}\mathbb{P}[N(j_1)+N(j_2)=N(k_1)+N(k_2)]\leq  \Big[ \sum_{  k>j}\mathbb{P}[N(k-j)=0]  \Big]^2. 
			\end{aligned}
		\end{equation*}
		We  estimated the sum inside the square by $3|A|$. So the contribution from here is at most $9|A|^2.$
		
		On $ A_{3223}$ the condition $ k_1<j_2<j_1<k_2 $ means $(k_1,j_2)\cap(j_1,k_2)=\emptyset$, and again we reduce to the case $A_{321}.$ Hence  we obtain \eqref{gr1} for $p=4$. By the Hölder inequality we obtain \eqref{gr1}.

		Plainly for any $n\in \N$
		\begin{equation*}
			\begin{aligned}
				\Big\|\sum_{j\in  A} a_je^{2\pi i yN(j)}\Big\|_{2n}^{2n} \leq  \Big\|\sum_{j\in  A} e^{2\pi i yN(j)}\Big\|_{2n}^{2n}.
			\end{aligned}
		\end{equation*}
		 Then \eqref{gr2} follows by using  this together with  \eqref{gr1}, and the Hölder inequality.
		
		For $p\geq 2$
		\begin{equation*}
			\begin{aligned}
				\Big\|\sum_{j\in  A} e^{2\pi i yN(j)}\Big\|_{p}^{p} \geq	\Big\|\sum_{j\in  A} e^{2\pi i yN(j)}\Big\|_{2}^{p} \geq  |A|^{p/2}.
			\end{aligned}
		\end{equation*}
		So by the Chebyshev inequality 
		\begin{equation*}\label{}
			\begin{aligned}
				\mathbb{P}\Big[\sup_{|a_j|\leq 1} \Big\|  \sum_{j\in A} a_je^{2\pi i yN(j)}  \Big\|_{p}^p \geq |A|^{\varepsilon}\Big\|  \sum_{j\in A} e^{2\pi i yN(j)}   \Big\|_{p}^p \Big]   
				&\leq  \mathbb{P}\Big[\sup_{|a_j|\leq 1} \Big\|  \sum_{j\in A} a_je^{2\pi i yN(j)}  \Big\|_{p}^p \geq |A|^{\frac{p}{2}+\varepsilon} \Big]\\
			&	\leq |A|^{-\frac{p}{2}-\varepsilon}	\mathbb{E}\sup_{|a_j|\leq 1}  \Big\|  \sum_{j\in A} a_je^{2\pi i yN(j)}\Big\|_p^p .
			\end{aligned}
		\end{equation*}
		By  \eqref{gr2} this last expression is  $ \lesssim_{\alpha,C_A}|A|^{-\varepsilon}.$ Therefore taking the limits completes the proof of \eqref{gr3}.

	\end{proof}


	\section{Perturbations of Powers II:  $p=2n$}

	In this section we prove Theorem 7. The proof handles the explicit decompositions of the proofs of Theorems 4,5,6 via abstract combinatorial arguments.  Also in those proofs when two variables are equal we reduce to the  $p=2$ case. In the present proof, this  reduces us to $p=2n-2$, which requires an inductive argument. After finishing off the auxiliary sums, the main sum reduces to a lattice point problem, for which we appeal to Theorem 8.

	\begin{proof}

	 With Theorem 4 we already proved the cases $p=2,4$, so we assume $p\geq 6$, and thus $d\geq n\geq 3$. As explained we will need an inductive argument, so we also assume the statement to be true for $n-1.$
		We may assume that $A$ contains no element in $[1, e^{100d}]$, and have more elements than $e^{100d}$. It follows immediately that  whenever $j,k\in A, \ k>j$ we have $k^d-j^d\geq e^{100d(d-1)}$.

		We let ${\bf j}=(j_1,j_2,\cdots,j_n)$ and ${\bf k}=(k_1,k_2,\cdots,k_n)$ denote vectors in $A^n$. We start with transforming our sum 		
		\begin{equation*}
			\begin{aligned}
				\mathbb{E}\Big\|\sum_{j\in  A} e^{2\pi i yN(j^d)}\Big\|_{2n}^{2n}  = \sum_{({\bf j,k})\in A^{2n}}\mathbb{P}\Big[\sum_{i=1}^nN(j_i^d)=\sum_{i=1}^nN(k_i^d)\Big].
			\end{aligned}
		\end{equation*}
		
		We decompose  $A^{2n}=A^*\cup A^{**}$, where $A^{**}$ contains the   vectors $({\bf j,k})$ with $j_a=k_b$ for some  $1\leq a,b\leq n.$ The main contribution comes from  $A^*$, and $A^{**}$ can easily be dealt with induction. We first dispatch $A^{**}$. 
		
		Let $A^{**}_{ab}$ for some fixed $1\leq a,b\leq n$ denote the subset of  $A^{**}$ contsisting  of vectors $({\bf j,k})$ with $j_a=k_b$.  Then after relabeling 
		\begin{equation*}
			\begin{aligned}
				\sum_{  A_{ab}^{**}}\mathbb{P}\Big[\sum_{i=1}^nN(j_i^d)=\sum_{i=1}^nN(k_i^d)\Big]&=|A|\sum_{ ({\bf j',k'})\in A^{2n-2}}\mathbb{P}\Big[\sum_{i=1}^{n-1}N(j_i^d)=\sum_{i=1}^{n-1}N(k_i^d)\Big]\\ &=
				|A|\mathbb{E}\Big\|\sum_{j\in  A} e^{2\pi i yN(j^d)}\Big\|_{2(n-1)}^{2(n-1)}. 
			\end{aligned}
		\end{equation*}
		Now applying the inductive hypothesis gives an appropriate bound for  this term.

		Therefore it remains to handle $A^*$. 
		We consider the set
		\begin{equation*}
			\begin{aligned}
				A_{1}^*&=:\{({\bf j,k})\in A^{2n}: j_1\leq j_2 \leq \ldots j_n, \ \   k_1\leq k_2\leq \ldots k_n  \}.
			\end{aligned}
		\end{equation*}
		Since there are $n!$ different orderings of  for  each of $j_i,k_i $ and since each ordering can be turned into  another by  a relabeling, we have
		\begin{equation*}
			\begin{aligned}
				\sum_{  A^*}\mathbb{P}\Big[\sum_{i=1}^n(j_i^d)=\sum_{i=1}^nN(k_i^d)\Big]&\leq (n!)^2 \sum_{A_1^*}\mathbb{P}\Big[\sum_{i=1}^nN(j_i^d)=\sum_{i=1}^nN(k_i^d)\Big].
			\end{aligned}
		\end{equation*}
		
		We will say  that the pair $j_i,k_i$ have positive orientation if $j_i<k_i$ and negative orientation if $j_i>k_i.$  For every vector $({\bf j,k})\in A^*_1$ this divides pairs or indices $1\leq i\leq n$  of the vector into two sets $\sigma_+({\bf j,k})$ and $\sigma_-({\bf j,k}).$  We observe that open intervals $(j_a,k_a)\in \sigma_+({\bf j,k})$ and $(k_b,j_b)\in \sigma_-({\bf j,k})$ cannot intersect. For intersection implies $j_a<j_b$ and hence $a<b$, but it also implies  $k_b<k_a$ and thus $b<a.$  The sum over $A_1^*$  above then becomes
		
		\begin{equation}\label{p2k1}
			\begin{aligned}
			 = \sum_{A_1^*}\mathbb{P}\Big[\sum_{i\in \sigma_-({\bf j,k})}N(j_i^d)-N(k_i^d)=\sum_{i\in \sigma_+({\bf j,k})}N(k_i^d)-N(j_i^d)\Big].
			\end{aligned}
		\end{equation}
		Of course one of $\sigma_+({\bf j,k}),\sigma_-({\bf j,k})$ may be empty, in which case the sum over that set is taken to be zero. We define $f(x):=4\sqrt{nx\log x}$ for $x\geq 1.$
		To each vector  in $A_1^*$   we assign the events
		\begin{equation*} 
			\begin{aligned}
				& \Omega_*:=\Big\{\sum_{i\in \sigma_-({\bf j,k})}N(j_i^d)-N(k_i^d)=\sum_{i\in \sigma_+({\bf j,k})}N(k_i^d)-N(j_i^d)  \Big\},\\
				&\Omega_i:=\Big\{ \big|N(k_i^d)(\omega)-N(j_i^d)(\omega)-(k_i^d-j_i^d)\big|<\max_{1\leq i\leq n} f(|k_i^d-j_i^d|)  \Big\}, \qquad 1\leq i \leq n.
			\end{aligned}
		\end{equation*}
		We let $m$ be an index  that maximizes  $|k_i^d-j_i^d|$ and hence $f(|k_i^d-j_i^d|)$.    Using these sets we  define $\Omega_{**}:=\Omega_*\cap\Omega_1 \cap \Omega_2\ldots \cap \Omega_n$, and obtain
		\begin{equation*}
			\begin{aligned}
				\Pp[\Omega_*]\leq \Pp[\Omega_{**}] + \sum_{i=1}^n\Pp[\Omega_i^c] .
			\end{aligned}
		\end{equation*}
		By Lemma \ref{lem1}, 
		$\Pp[\Omega_i^c] \leq 2|k_m^d-j_m^d|^{-4n},$ 
		that is, the contribution of these terms are minor. For the main term  $\Omega_{**}$ we first observe that if this set  is nonempty for a vector $({\bf j,k})$ we have by the triangle inequality
		\begin{equation}\label{p2k2} 
			\begin{aligned}
				\Big| \sum_{i\in \sigma_+({\bf j,k})} (k_i^d-j_i^d) -  \sum_{i\in \sigma_-({\bf j,k})} (j_i^d-k_i^d)  \Big|  <  nf(|k_m^d-j_m^d|). 
			\end{aligned}
		\end{equation}	
		So the set  $\Omega_{**}$ is empty  for vectors violating this relation.
		Let $A_{11}^*$  be the set of vectors in  $A_{1}^*$ satisfying this relation.  Observe that if for a vector $({\bf j,k})$ one of the sets  $\sigma_+({\bf j,k}),  \sigma_-({\bf j,k})$ is empty, then that vector cannot belong to $A_{11}^*$. For vectors in $A_{11}^*$, assuming $m\in \sigma_+({\bf j,k})$, we compute the probability of the set $\Omega_{**}$ as follows.  Our observation on nonintersection of pairs in sets $\sigma_-,\sigma_+$, and the independent increment property of  Poisson processes allow us to apply Lemma 5 to write
		\begin{equation*} 
			\begin{aligned}
				\Pp[\Omega_{**}]	&=\sum_{a\in \Z_+}\mathbb{P}\Big[\sum_{i\in \sigma_-({\bf j,k})}N(j_i^d)-N(k_i^d)=a=\sum_{i\in \sigma_+({\bf j,k})}N(k_i^d)-N(j_i^d)\Big] \\ 	&=\sum_{a\in \Z_+}\mathbb{P}\Big[\sum_{i\in \sigma_-({\bf j,k})}N(j_i^d)-N(k_i^d)=a\Big]\Pp\Big[ \sum_{i\in \sigma_+({\bf j,k})}N(k_i^d)-N(j_i^d)=a\Big]. 
			\end{aligned}
		\end{equation*}	
		We apply Lemma 7 to estimate the sum on $\sigma_+$, and sum over $a$ for the sum on $\sigma_-$
		\begin{equation*} 
			\begin{aligned}
				\leq \frac{\sqrt{n}}{\sqrt{ |k_m^d-j_m^d|}}\sum_{a\in \Z_+}\mathbb{P}\Big[\sum_{i\in \sigma_-({\bf j,k})}N(j_i^d)-N(k_i^d)=a\Big] \leq \frac{\sqrt{n}}{\sqrt{ |k_m^d-j_m^d|}} .
			\end{aligned}
		\end{equation*}
		If $m$ belongs to $ \sigma_-({\bf j,k})$, after using independence  we apply Lemma 7 to the sum on $\sigma_-$ and sum over $a$ the other sum to get the same result.
		Combining all of these we  continue from \eqref{p2k1}
		\begin{equation*}
			\begin{aligned}
				\leq	\sum_{  A_{1}^*}\Big[ \Pp[\Omega_{**}]+ \sum_{i=1}^n \Pp[\Omega_i^c]   \Big] 
				&	=	\sum_{  A_{11}^*} \Pp[\Omega_{**}] + \sum_{  A_{1}^*} \sum_{i=1}^n \Pp[\Omega_i^c] \\ &\leq 
				\sum_{  A_{11}^*} \frac{\sqrt{n}}{\sqrt{ |k_m^d-j_m^d|}}+ 2n\sum_{  A_{1}^*} |k_m^d-j_m^d|^{-4n}. 
			\end{aligned}
		\end{equation*}
		The second sum is easy to handle:
		\begin{equation*}
			\begin{aligned}
				\sum_{ A_{1}^*}   |k_m^d-j_m^d|^{-4n}\leq  	\sum_{ A_{1}^*}   \prod_{i=1}^{n}|k_i^d-j_i^d|^{-4} \leq 2^n \prod_{i=1}^{n}   \sum_{\substack{j_i,k_i \in A \\ j_i<k_i}}   |k_i^d-j_i^d|^{-4}  \leq  2^n\Big[\sum_{k \in A }   k^{-4d+5}\Big]^n\leq  1 .  
			\end{aligned}
		\end{equation*}

		We now look at the first sum. We decompose $ A_{11}^*=\bigcup_{i=1}^n  A_{11i}^*$ where $m=i$ for vectors in $A_{11i}^*.$   We further decompose   $ A_{11i}^*=\bigcup_{\substack{j_i,k_i\in A \\ j_i\neq k_i}}  A_{11i}^{*j_ik_i}$, where $A_{11i}^{*j_ik_i}$ contains the  vectors in    $A_{11i}^{*}$ in  which $j_i,k_i$ are fixed. Then 
		\begin{equation*}\label{}
			\begin{aligned}
				\sum_{  A_{11}^*} \frac{1}{\sqrt{|k_m^d-j_m^d|}}&\leq \sum_{i=1}^n \sum_{  A_{11i}^{*}} \frac{1}{\sqrt{ |k_i^d-j_i^d|}} \leq n\sup_{1\leq i\leq n} \sum_{  A_{11i}^{*}} \frac{1}{\sqrt{ |k_i^d-j_i^d|}}.
			\end{aligned}
		\end{equation*}
		We will estimate the last sum for a fixed $i=r.$ As the estimation will not depend on the choice of  $r$,  this will suffice.
		\begin{equation}\label{p2k4}
			\begin{aligned}	
				\sum_{  A_{11r}^{*}} \frac{1}{\sqrt{ |k_r^d-j_r^d|}}=  \sum_{\substack{j_r,k_r \in A \\ j_r\neq k_r}}  \sum_{  A_{11r}^{*j_rk_r}} \frac{1}{\sqrt{|k_r^d-j_r^d|}}=  \sum_{\substack{j_r,k_r \in A \\ j_r\neq k_r}} \frac{\big|  A_{11r}^{*j_rk_r} \big|}{\sqrt{ |k_r^d-j_r^d|}}={\bf S}.
			\end{aligned}
		\end{equation}

	We  need to estimate $|A_{11r}^{*j_rk_r}|$. 	Let $D_i:=k_i^d-j_i^d$.
	 The set $A_{11r}^{*j_rk_r}$ lies within the set 
	\begin{equation}\label{p2k5}
		\begin{aligned}
			\Big\{ ({\bf j,k})\in A^{2n}\Big| \  j_r,k_r  \  \text{are fixed}, \quad  0<|k_i^d-j_i^d|\leq |D_r| \  \  \ 1\leq i\leq n,  \quad \eqref{p2k2} \ \text{holds}    \Big\}.
		\end{aligned}
	\end{equation}
We estimate the size of this set by looking at the possible choices for pairs $j_i,k_i$. Number of choices for  any pair $j_i,k_i$ for vectors of this set can be estimated  trivially by $|A|^2$ using merely the fact that $j_i,k_i\in A$. This we call the first method.  Alternatively our second method proceeds  via the identity  
\begin{equation*}\label{}
	\begin{aligned}	
	 \big|\big\{(j_i,k_i)\in A^2 \big| \   0<|k_i^d-j_i^d|\leq |D_r|  \    \big\}   \big|=2 \big|\big\{(j_i,k_i)\in A^2 \big| \  0<k_i^d-j_i^d\leq |D_r|   \  \big\}   \big|
	\end{aligned}
\end{equation*}
and  applying Theorem 8 with  $E=D=|D_r|$, and obtains  a bound $C_d|D_r|^{2/d}.$  Once $n-1$ pairs are fixed, for the  remaining pair we can use the full force of our arithmetic results. This will be called the refined method.

 We first   crudely  estimate
 {\bf S}. Observe that by the second method
\begin{equation*}\label{}
		{\bf  S}\lesssim_{d,n} \sum_{\substack{ j_r,k_r \in A \\  j_r\neq k_r}} \frac{|D_r|^{2(n-1)/d}}{ |D_r|^{1/2}}=  \sum_{\substack{ j_r,k_r \in A \\  j_r\neq k_r}}  |D_r|^{\frac{2(n-1)}{d}-\frac{1}{2}}.
\end{equation*}
The exponent of $|D_r|$ in this last expression is  nonpositive if $d\geq 4n-4$. In this case the contribution of this term can be controlled  by $C_{d,n}|A|^2.$ So we only need to  estimate ${\bf S}$ under the assumption $d<4n-4.$

To do this we will estimate the size of \eqref{p2k5} combining all our three methods. We have $n-1$ pairs to estimate in this set. One of them will be estimated by the refined method after all $n-2$ pairs are estimated and fixed by the first and the second methods, we fix   an index  $1\leq l\leq n$ with $l\neq r$ for this purpose.  For the other $n-2$ indices we note that in general the first method gives a better bound than the second, but it is the second method that enables cancellation with the $\sqrt{|k^d-j^d|}$ term of the denominator. So we must use the second method as many times  as possible to take advantage of this cancellation, and for the remaining pairs we use the first method. To this end we pick $d':= \lceil \frac{d}{4}-\frac{1}{2} \rceil-1$ indices apart from $r,l$. This $d'$ is the largest number of applications for the second method after which the power of  
$|k^d-j^d|$ still remains nonpositive in the sum. Given the assumption $d<4n-4$ we also have $d'\leq n-2$. To the remaining $n-2-d'$ pairs we apply the first method. So the cardinality of \eqref{p2k5} is bounded by

		\begin{equation}\label{p2k3}
				\lesssim_{d,n}|D_r|^{\frac{2d'}{d}}|A|^{2(n-2-d')} \sup_{\substack{|D_i| \leq |D_r| \\ i\neq r,l}}	\Big|  \Big\{ ({ j_l,k_l})\in A^{2}\Big| \  j_l\neq k_l,     \quad \Big| k_l^d-j_l^d +\sum_{i\neq l} D_i   \Big|  <  nf(|D_r|)    \Big\}\Big|.
				\end{equation}
	
		So only  the index $l$ remains, to which we will apply the refined method. Since $\Big|\sum_{i\neq l} D_i\Big| \leq n|D_r|$ we can bound the supremum above by 
		\begin{equation*}
			\begin{aligned}
				&\leq  \sup_{|D| \leq n|D_r| }	\big|  \big\{ ({ j_l,k_l})\in A^{2}\big| \  j_l\neq k_l,     \quad  | k_l^d-j_l^d +D | < nf(|D_r|)     \big\}\big| \\ 	&\leq 2 \sup_{|D| \leq n|D_r| }	\big|  \big\{ ({ j,k})\in A^{2}\big| \  j< k,     \quad  | k^d-j^d -D | < nf(|D_r|)     \big\}\big|  \\ 	&\leq 2 \sup_{nf(|D_r|)  \leq D \leq n|D_r| }	\big|  \big\{ ({ j,k})\in A^{2}\big| \  j< k,     \quad  | k^d-j^d -D | < nf(|D_r|)     \big\}\big| .
			\end{aligned}
		\end{equation*}
		To this we can now apply our Theorem 8 to continue from \eqref{p2k3} 
		\begin{equation*}\label{}
			\begin{aligned}
				\lesssim_{d,n}|A|^{2n-4-2d'}|D_r|^{\frac{2d'}{d}} f^{\frac{2}{d}}(|D_r|)\lesssim_{d,n} |A|^{2n-4-2d'}|D_r|^{\frac{2d'+1}{d}} \log^{\frac{1}{d}}|D_r| .
			\end{aligned}
		\end{equation*}
		Plugging this in  we can estimate 
		\begin{equation*}\label{p2k8}
			\begin{aligned}	
				{\bf S} \lesssim_{d,n} |A|^{2n-4-2d'} \sum_{\substack{j,k\in A\\ j<k}} \frac{ \log^{\frac{1}{d}}|k^d-j^d|}{ |k^d-j^d|^{\frac{1}{2}-\frac{2d'+1}{d}}}. 
			\end{aligned}
		\end{equation*}
		 In the last sum  the power of $|k^d-j^d|$ is $i/2d$ with $i=\{1,2,3,4\}$ depending on $d \pmod{4}$. In any case the summand in this last sum is a decreasing function of the type described in Lemma 8 on $[e^2,\infty)$.   So we can apply Lemma 8 twice to bound the last sum by
			\begin{equation*}\label{}
			\begin{aligned}	
			 \leq \sum_{\substack{10\leq  j<k\leq |A|+9}} \frac{ \log^{\frac{1}{d}}|k^d-j^d|}{ |k^d-j^d|^{\frac{1}{2}-\frac{2d'+1}{d}}}\lesssim_d \log^{\frac{1}{d}}|A| \sum_{\substack{1\leq  j<k\leq |A|}}  |k^d-j^d|^{\frac{2d'+1}{d}-\frac{1}{2}}. 
			\end{aligned}
		\end{equation*}
	To estimate the last sum 	we factorize, and apply a change of variables to obtain 
		\begin{equation*}\label{}
			\begin{aligned}	
				\leq 	\sum_{1<  k\leq |A|      }k^{(d-1)(\frac{2d'+1}{d}-\frac{1}{2})}\sum_{\substack{1\leq j<k  }  }\big(k-j\big)
				^{\frac{2d'+1}{d}-\frac{1}{2}}=	\sum_{1<  k\leq |A|      }k^{(d-1)(\frac{2d'+1}{d}-\frac{1}{2})}\sum_{\substack{1\leq j<k  }  }j
				^{\frac{2d'+1}{d}-\frac{1}{2}}.
			\end{aligned}
		\end{equation*}
	Using integrals we bound the inner sum and obtain 
		\begin{equation*}\label{}
			\begin{aligned}	
		   \lesssim	\sum_{1<  k\leq |A|      }k^{(d-1)(\frac{2d'+1}{d}-\frac{1}{2})}k
		^{\frac{2d'+1}{d}+\frac{1}{2}}= 	\sum_{1<  k\leq |A|      }k^{2d'+2-\frac{d}{2}}.
	\end{aligned}
\end{equation*}
		The exponent in this last sum is  $-1$ if $d\equiv 2 \pmod{4}$, and $> -1$ otherwise. So this sum is bounded by
		\begin{equation*}\label{}
			\begin{aligned}	
				\lesssim \begin{cases} |A|^{2d'+3-\frac{d}{2}}\log |A|  \quad  &\text{if} \quad  d\equiv 2 \pmod{4}, \\
					|A|^{2d'+3-\frac{d}{2}}  \quad  &\text{else}. 
				\end{cases}
			\end{aligned}
		\end{equation*}
	This finishes the estimation of {\bf S}, and gives exactly the desired bounds
		\begin{equation*}\label{}
			\begin{aligned}	
			{\bf S}	\lesssim_{d,n} \begin{cases} |A|^{2n-\frac{d}{2}-1}\log^{1+\frac{1}{d}} |A|  \quad  &\text{if} \quad  d\equiv 2 \pmod{4}, \\
				|A|^{2n-\frac{d}{2}-1}\log^{\frac{1}{d}} |A|  \quad  &\text{else}.
				\end{cases}
			\end{aligned}
		\end{equation*}
	Putting together all our estimates 	 finishes the proof.

	\end{proof}


	\section{The Arithmetic Component}

We  provide two proofs of Theorem 8. The first is by viewing it as a margin of sets of type \eqref{os}. The second is directly, without using any result from the  literature on lattice point problems. 

\begin{proof}\label{APROOF 1}
	
Plainly the cardinality of our set \eqref{os}  can be obtained from the equations
\begin{equation*}
	\begin{aligned}
		\big|\big\{ (j,k)\in \Z^2:  0<|k|^d-|j|^d\leq x    \big\}\big|&= 	\big|\big\{ (j,k)\in \Z^2:  0<|k|^d-|j|^d< x   \big \}\big|  \\   &+	\big|	\big\{ (j,k)\in \Z^2:  |k|^d-|j|^d= x    \big\}\big|
	\end{aligned}
\end{equation*} 
\begin{equation*}
	\begin{aligned}
			\big|\big\{ (j,k)\in \Z^2:  0<|k|^d-|j|^d< x   \big \}\big|&= 	4\big|\big\{ (j,k)\in \N^2:  0<k^d-j^d< x   \big \}\big|\\   &+	2\big|\big\{ k\in \N:  0<k^d< x  \big  \}\big|.
	\end{aligned}
\end{equation*} 
Plainly 
\begin{equation*}
	\big|\big\{ k\in \N:  0<k^d< x   \big\}\big|= x^{1/d}+\Oh(1).
\end{equation*} 
We also have 
\begin{equation*}
\big|\big\{ (j,k)\in \Z^2:  |k|^d-|j|^d= x    \big\}\big|=4	\big|\big\{ (j,k)\in \N^2:  k^d-j^d= x    \big\}\big|+\Oh(1).
\end{equation*} 
For the set on the right hand side to be nonempty, $x$ must be an integer. Then $k-j$ is a  divisor of $x$,  so it has at most $2d(x)$ different values. For a fixed value  $a=k-j$, we have $x=k^d-j^d=(j+a)^d-j^d,$ as $a\neq 0$, this means $j$ is the root of a degree $d-1$ polynomial. Once $j$ is fixed, so is $k$. Hence there are $2(d-1)d(x)$ different pairs $(j,k)$ in this set. But $2(d-1)d(x)\leq C_{d,\varepsilon}x^{\varepsilon}.$  Therefore
\begin{equation}\label{crv}
	\big	|\big\{ (j,k)\in \Z^2:  |k|^d-|j|^d= x    \big\}\big|=\Oh_{d,\varepsilon}(x^{\varepsilon}).
\end{equation} 
Using these we finally obtain
\begin{equation*}
	\begin{aligned}
	\big	|\big\{ (j,k)\in \N^2:  0<k^d-j^d< x   \big \}\big|	 =  \frac{1}{4}\big|\big\{ (j,k)\in \Z^2:  0<|k|^d-|j|^d\leq  x   \big \}\big|-\frac{1}{2}x^{1/d}+\Oh_{d,\varepsilon}(x^{\varepsilon}),
	\end{aligned}
\end{equation*} 
which by  \eqref{R-}, and  \eqref{nwkr}
\begin{equation}\label{eta}
	\begin{aligned}
		= \frac{1}{4}\Big[A_dx^{2/d}+B_d x^{1/(d-1)}+D_dF_d(x^{1/d})x^{1/d-1/d^2}\Big]-\frac{1}{2}x^{1/d}+\Oh_{d,\varepsilon}(x^{\frac{46}{73}\frac{1}{d}+\varepsilon}).
	\end{aligned}
\end{equation} 
Choosing $\varepsilon=1/100d$ in both \eqref{nwkr},\eqref{crv}, and considering $D_dF_d(x^{1/d})x^{1/d-1/d^2}$ as an error term  we obtain the estimate 
\begin{equation}\label{eta3}
	= \frac{1}{4}\Big[A_dx^{2/d}+B_d x^{1/(d-1)}\Big]-\frac{1}{2}x^{1/d}+\Oh_{d}(x^{1/d-1/d^2}).
\end{equation}

From this we will deduce a theorem on the set \eqref{ss2}. 
We split this estimate into two:
	\begin{equation}\label{p6te2}
		\begin{aligned}
			\sup_{D\leq E\leq 4D}	&\big|\big\{ (j,k) \in \N^2: j<k,  \   |k^d-j^d-E| <D \big\}\big| \\
			\sup_{4D\leq E\leq D^2}	&\big|\big\{ (j,k) \in \N^2: j<k,  \   |k^d-j^d-E| <D \big\}\big|	.
		\end{aligned}
	\end{equation}
	The first estimate is easier to do, and we start with this first estimate. It is bounded by
	\begin{equation}\label{a5d}
		\begin{aligned}
		\big|\big\{ (j,k) \in \N^2: \  0<k^d-j^d <5D \big\}\big|.
		\end{aligned}
	\end{equation}
	We apply \eqref{eta3} to immediately bound this by $C_dD^{2/d}.$ For the second estimate in \eqref{p6te2} we fix an $E$ in the described range, and then apply \eqref{eta3} for the values $E+D$ and $E-D$, and  subtract to obtain
	\begin{equation*}\label{}
		\big|\big\{ (j,k) \in \N^2: j<k,  \   E-D\leq k^d-j^d <E+D \big\}\big| 
	\end{equation*}
	\begin{equation}\label{fth}
		\begin{aligned}
			&\leq  \frac{1}{4}\Big[A_d\big[(E+D)^{2/d}-(E-D)^{2/d}\big] +B_d \big[(E+D)^{1/(d-1)}-(E-D)^{1/(d-1)}\big]\Big]\\ &-\frac{1}{2}\big[ (E+D)^{1/d}-(E-D)^{1/d} \big]+\Oh_{d}(E^{\frac{1}{d}-\frac{1}{d^2}}).
		\end{aligned}
	\end{equation}
	We have the estimates for $4D\leq E\leq D^2$  by mean value theorem 
	\begin{equation*}\label{}
		(E+D)^{\frac{2}{d}}-(E-D)^{\frac{2}{d}}\lesssim_d DE^{\frac{2}{d}-1}\leq D^{\frac{2}{d}}, \qquad  	(E+D)^{\frac{1}{d-1}}-(E-D)^{\frac{1}{d-1}}\lesssim_d DE^{\frac{1}{d-1}-1}\leq D^{\frac{1}{d-1}}.
	\end{equation*}                                                                                                                                           Plugging these into \eqref{fth} we obtain the bound $C_dD^{2/d}$.
	
\end{proof}

From the literature on lattice point problems, see for example \cite{gk}, we know that  for the error terms $\Delta_d$ bounds of order $\Oh(x^{\frac{1}{d}})$ are reachable without  recourse to    Fourier analysis, merely by carefully  decomposing the region, and  carefully   choosing the direction of the lines with which we  count the lattice points. We  show now that the same is true for  our Theorem 8. However the proof is considerably long and difficult, as our estimate concerns not just one fixed set but a whole class of sets. This forces a case by case analysis. Another issue is that for $d\geq 3$ the curves $y^d-x^d=m$ do not have the algebraic simplicity that they have when  $d=2.$ Once $d>2$ we no longer have easy rotation and inversion properties. All this comes through in the proof, most  obviously in our introduction and study of the function $g$.

	\begin{proof}
		
		We split this estimate into two as in \eqref{p6te2}, 
	and start with the   first. It is bounded by  \eqref{a5d}.
		We count the pairs of this set by fixing $j.$ 
		We observe that 
		\begin{equation*}\label{}
			\begin{aligned}
				(k-j)dj^{d-1}<  \big(k-j\big)\big(k^{d-1}+k^{d-2}j+\ldots +kj^{d-2}+j^{d-1}\big)=k^d-j^d <5D.
			\end{aligned}
		\end{equation*}
		Therefore $j<5D^{\frac{1}{d-1}}$, and  for each such $j$ 
		\begin{equation*}\label{}
			\begin{aligned}
				1\leq	k-j<5Dj^{1-d}.
			\end{aligned}
		\end{equation*}
		Hence for a fixed $j$ there are at most $5Dj^{1-d}$ pairs $(j,k).$ We will use this estimate for $j>D^{1/d}.$  For $j\leq D^{1/d}$ we observe that 
		\begin{equation*}\label{}
			\begin{aligned}
				k^d-j^d <5D\implies k<(j^d+5D)^{1/d}\implies k<6D^{1/d}. 
			\end{aligned}
		\end{equation*}
		Thus for these $j$ there are  at most $6D^{1/d}$ pairs $(j,k).$ Now summing all these we can
		bound our set by
		\begin{equation*}\label{}
			\begin{aligned}
				\sum_{1\leq j < 5D^{\frac{1}{d-1}}}\big|\big\{ k\in \N:   \   0<k^d-j^d <5D \big\}\big| 
				\leq  \sum_{1\leq j\leq D^{1/d}}  6D^{1/d} + 5D\cdot \sum_{D^{\frac{1}{d}}<j < 5D^{\frac{1}{d-1}}} j^{1-d}.
			\end{aligned}
		\end{equation*}
		The first sum is plainly bounded by $6D^{2/d}$.  The second sum can be estimated by  
		\begin{equation*}\label{}
			\begin{aligned}
			D^{-1+\frac{1}{d}}	+\int_{D^{1/d}}^{\infty} x^{1-d}dx\leq 2D^{-1+\frac{2}{d}}.
			\end{aligned}
		\end{equation*}
		So we obtain the final bound $16D^{2/d}.$

		Now we turn to the second estimate in \eqref{p6te2}.
		We will count this set via differences $b=k-j$ 	as in the proof of Theorem 4. Since $b^d=(k-j)^d<k^d-j^d<E+D$ we must have
		$1\leq b<(E+D)^{1/d}.$ Then  for a fixed $E\in [4D,D^2],$ the second set in \eqref{p6te2} is bounded by 
		\begin{equation*}\label{}
			\begin{aligned}
			\leq \sum_{1\leq b<(E-2D)^{\frac{1}{d}}}+\sum_{(E-2D)^{\frac{1}{d}}\leq b<(E+D)^{\frac{1}{d}}}\big|\big\{ j \in \N:    |(j+b)^d-j^d-E| <D \big\}\big| 
			={\bf I+II}.
			\end{aligned}
		\end{equation*}

		To deal with these sets we introduce the function 
		\begin{equation*}\label{}
			\begin{aligned}
				g(x):=(x+b)^d-x^d-b^d=\sum_{j=1}^{d-1}{d \choose j}x^{d-j}b^j,    \quad x\geq 0.
			\end{aligned}
		\end{equation*}
		This function is strictly increasing for $x\geq 0$ with $g(0)=0$, and indeed is a bijection on $[0,\infty)$.  So  it has a strictly increasing inverse  $g^{-1}:[0,\infty)\rightarrow [0,\infty) $. Observe that
		\begin{equation}\label{comgg}
			\begin{aligned}
				g(x)	\leq xg'(x)=\sum_{j=1}^{d-1}{d \choose j}(d-j)x^{d-j}b^j\leq dg(x),    \quad x\geq 0.
			\end{aligned}
		\end{equation}
		Also   for $x\geq 0$,
		\begin{equation}\label{p6l1}
			\begin{aligned}
				g(x)\geq dx^{d-1}b+dxb^{d-1},  \  \text{and} \ \  g'(x)\geq db^{d-1}.
			\end{aligned}
		\end{equation}
		Furthermore from the first of these we obtain 
		\begin{equation}\label{p6l4}
			\begin{aligned}
				g(x)\geq dbx^{d-1} \implies    x\geq g^{-1}(dbx^{d-1})\implies    \Big(\frac{x}{db}\Big)^{\frac{1}{d-1}}\geq g^{-1}(x),\\ \
				g(x)\geq dxb^{d-1} \implies    x\geq g^{-1}(dxb^{d-1})\implies    \frac{x}{db^{d-1}}\geq g^{-1}(x).
			\end{aligned}
		\end{equation}
		
		We now  apply these observations on  $g$ to obtain  our results. We first deal with {\bf II}.  Fix  $b$. 	
		\begin{equation}\label{p6l2}
			\begin{aligned}
				\big|\big\{ j \in \N:    |(j+b)^d-j^d-E| <D \big\} \big|  =	&\big|\big\{ j \in \N:    E-D-b^d<g(j) <E+D-b^d \big\} \big| \\  \leq	&\big|\big\{ j \in \N:   0<g(j) <E+D-b^d \big\} \big|.
			\end{aligned}
		\end{equation}
		By \eqref{p6l1}  
		\begin{equation*}\label{}
			\begin{aligned}
				g(j) <E+D-b^d  \implies	j<\frac{E+D-b^d}{db^{d-1}}.
			\end{aligned}
		\end{equation*}
		Since $j>0$, and $b^d\geq E-2D\geq 2D$  we bound the last term of \eqref{p6l2} by 
		\begin{equation*}\label{}
			\begin{aligned}
				\big|\big\{ j\in \N:   1\leq j < \frac{E+D-b^d}{db^{d-1}} \big\} \big|  \leq \frac{E+D-b^d}{db^{d-1}} \leq \frac{3D}{db^{d-1}}  \leq \frac{3D^{1/d}}{d}\leq D^{1/d}.
			\end{aligned}
		\end{equation*}
		Observe that the cardinality of integers $b$ with our assumption can be bounded from
		\begin{equation*}\label{}
			\begin{aligned}
				3D=(E+D)-(E-2D)=	\big[ (E+D)^{\frac{1}{d}}-(E-2D)^{\frac{1}{d}}  \big] \Big[\sum_{j=1}^{d}  (E+D)^{\frac{d-j}{d}}(E-2D)^{\frac{j-1}{d}}\Big],
			\end{aligned}
		\end{equation*}
		implying that 
		\begin{equation*}\label{}
			\begin{aligned}
				(E+D)^{\frac{1}{d}}-(E-2D)^{\frac{1}{d}}   < 3D(E+D)^{\frac{1}{d}-1}<3D^{\frac{1}{d}}.
			\end{aligned}
		\end{equation*}
		So we bound 
		\begin{equation*}\label{}
			\begin{aligned}
				{\bf II}\leq (3D^{\frac{1}{d}}+1)D^{1/d}\leq 4D^{2/d}.
			\end{aligned}
		\end{equation*}
		
		We turn to ${\bf I}.$ Again we fix $b$.
		\begin{equation*}\label{}
			\begin{aligned}
				\big|\big\{ j \in \N:    |(j+b)^d-j^d-E| <D \big\} \big|   &=	\big|\big\{ j \in \N:    E-D-b^d<g(j) <E+D-b^d \big\} \big| \\  &=	\big|\big\{ j \in \N:  g^{-1}(E-D-b^d) <j <g^{-1}(E+D-b^d) \big\} \big|.
			\end{aligned}
		\end{equation*}
		We measure the cardinality of this set by the  mean value theorem
		\begin{equation}\label{p6l3}
			\begin{aligned}
				|g^{-1}(E+D-b^d)- g^{-1}(E-D-b^d)| = 2D (g^{-1})'(x) ,
			\end{aligned}
		\end{equation}
		for some $E-D-b^d< x< E+D-b^d.$  Observe that since both $g',g^{-1}$ are strictly increasing on the positive real axis, $	(g^{-1})'$ is strictly decreasing there. So \eqref{p6l3} can be bounded by $2D (g^{-1})'(E-D-b^d).$ 
		We have from \eqref{comgg} for $y>0$
		\begin{equation*}\label{}
			\begin{aligned}
				(g^{-1})'(y)=\frac{1}{g'(g^{-1}(y))}\leq \frac{g^{-1}(y)}{g(g^{-1}(y))}=\frac{g^{-1}(y)}{y}.
			\end{aligned}
		\end{equation*}
		Combining this with \eqref{p6l4}
		\begin{equation}\label{p6l5}
			\begin{aligned}
				(g^{-1})'(E-D-b^d) \leq 	\frac{g^{-1}(E-D-b^d)}{E-D-b^d}\leq \min \Big\{\frac{1}{(E-D-b^d)^{\frac{d-2}{d-1}}(db)^{\frac{1}{d-1}}}, \frac{1}{db^{d-1}}  \Big\} .
			\end{aligned}
		\end{equation}
		We can bound {\bf I}  as follows
		\begin{equation*}\label{}
			\begin{aligned}
				{\bf I}\leq \sum_{1\leq b<(E-2D)^{\frac{1}{d}}}1+	{2D}(g^{-1})'(E-D-b^d)\leq (E-2D)^{\frac{1}{d}} +  2D\Big[\sum_{1\leq b<(E-2D)^{\frac{1}{d}}}\frac{g^{-1}(E-D-b^d)}{(E-D-b^d)}\Big].
			\end{aligned}
		\end{equation*}
		As we assumed $E\leq D^2$ we can bound $(E-2D)^{1/d}\leq D^{2/d}$. We decompose the last  sum in paranthesis, and bound it by \eqref{p6l5}
		\begin{equation*}\label{}
			\begin{aligned}
				&\leq \sum_{1\leq b<D^{\frac{1}{d}}}\frac{1}{(E-D-b^d)^{\frac{d-2}{d-1}}b^{\frac{1}{d-1}}}+	\sum_{D^{\frac{1}{d}}\leq b<(E-2D)^{\frac{1}{d}}}\frac{1}{db^{d-1}}\\ &\leq  \frac{2}{(E-D)^{\frac{d-2}{d-1}}}\Big[\sum_{1\leq b<D^{\frac{1}{d}}}b^{-\frac{1}{d-1}}\Big]+	\frac{1}{d}\Big[\sum_{D^{\frac{1}{d}}\leq b<(E-2D)^{\frac{1}{d}}}b^{1-d}\Big].
			\end{aligned}
		\end{equation*}
			We can estimate  these sums by 
		\begin{equation*}\label{}
			\begin{aligned}
				\sum_{1\leq b<D^{\frac{1}{d}}}b^{-\frac{1}{d-1}}\leq 1+ \int_{1}^{D^{\frac{1}{d}}} b^{-\frac{1}{d-1}}db=1+\frac{d-1}{d-2}\Big[b^{\frac{d-2}{d-1}} \Big|^{D^{\frac{1}{d}}}_1 \Big]\leq   2D^{\frac{d-2}{d(d-1)}},
			\end{aligned}
		\end{equation*}
		and
		\begin{equation*}\label{}
			\begin{aligned}
				\sum_{D^{\frac{1}{d}}\leq b< 
					(E-2D)^{\frac{1}{d}}}b^{1-d}\leq D^{\frac{1-d}{d}}+ \int_{D^{\frac{1}{d}}}^{\infty}b^{1-d}db \leq 2D^{-1+\frac{2}{d}}.
			\end{aligned}
		\end{equation*}
		Combining all of these we can write
		\begin{equation*}\label{}
			\begin{aligned}
				{\bf I} \leq D^{\frac{2}{d}} +2D\Big[\frac{4D^{\frac{d-2}{d(d-1)}}}{(E-D)^{\frac{d-2}{d-1}}}+	\frac{2}{d}D^{-1+\frac{2}{d}}\Big]\leq D^{\frac{2}{d}} +8D^{1+\frac{d-2}{d(d-1)}-\frac{d-2}{d-1}}+	2D^{\frac{2}{d}}\leq 11D^{\frac{2}{d}}.
			\end{aligned}
		\end{equation*}
		This finishes the proof.

	\end{proof}

We  turn to proving our last theorem. The proof follows the same lines along our first proof of Theorem 8.

\begin{proof}

Let $D^s\geq 2D.$	We	apply \eqref{eta3} for the values $D^s+D$ and $D^s-D$, and  subtract to bound the left hand side of \eqref{p6sl3} by 
	\begin{equation}\label{fths2}
		\begin{aligned}
			&\leq  \frac{1}{4}\Big[A_d\big[(D^s+D)^{2/d}-(D^s-D)^{2/d}\big] +B_d \big[(D^s+D)^{1/(d-1)}-(D^s-D)^{1/(d-1)}\big]\Big]\\ &-\frac{1}{2}\big[ (D^s+D)^{1/d}-(D^s-D)^{1/d} \big]+\Oh_{d}(D^{\frac{s}{d}[1-\frac{1}{d}]}).
		\end{aligned}
	\end{equation}
	We have the estimates   by the mean value theorem 
	\begin{equation*}\label{}
		(D^s+D)^{\frac{2}{d}}-(D^s-D)^{\frac{2}{d}}\leq 4 D^{1+s(\frac{2}{d}-1)}, \qquad  (D^s+D)^{\frac{1}{d-1}}-(D^s-D)^{\frac{1}{d-1}}\leq 4 D^{1+s(\frac{1}{d-1}-1)}.
	\end{equation*}
	Of these  plainly the first one dominates. Comparing it with the error term in \eqref{fths2} yields the desired result.
	
		If $D^s< 2D$ then \eqref{eta3} can be applied with $x=3D$ to show that our set has cardinality at most $C_dD^{2/d}$. But  $D^s< 2D$ leads to 
	\[D^{\frac{2}{d}}=DD^{\frac{2}{d}-1}\leq D(D^s/2)^{\frac{2}{d}-1}\leq 2D^{1+s(\frac{2}{d}-1)}.\]
	This finishes the proof.

\end{proof}

\end{document}